\documentclass[12pt, reqno]{amsart}
\usepackage{amssymb,amsthm,amsfonts,amsmath, amscd}

\usepackage{hyperref}
\usepackage{mathrsfs}
\usepackage{comment}

\evensidemargin0cm \oddsidemargin0cm \textwidth15cm

\newcommand\Y{\mathbb Y}
\newcommand\Z{\mathbb Z}
\newcommand\C{\mathbb C}
\newcommand\R{\mathbb R}

\newcommand\E{\mathbb E}
\newcommand\N{\mathbb N}

\newcommand\F{\mathbb F}
\newcommand\ASC{\operatorname{ASC}}

\newcommand\al{\alpha}
\newcommand\be{\beta}

\newcommand\De{\Delta}
\newcommand\de{\delta}

\newcommand\la{\lambda}
\newcommand\si{\sigma}

\newcommand\eps{\varepsilon}

\newcommand\om{\omega}
\newcommand\Om{\Omega}
\newcommand\z{\zeta}

\newcommand\wt{\widetilde}

\newcommand\const{\operatorname{const}}

\newcommand\Sym{\operatorname{Sym}}

\newcommand\ccdot{\,\cdot\,}

\newtheorem{theorem}{Theorem}[section]
\newtheorem{proposition}[theorem] {Proposition}

\newtheorem{corollary}[theorem]{Corollary}
\newtheorem{lemma}[theorem]{Lemma}

\theoremstyle{definition}
\newtheorem{definition}[theorem]{Definition}
\newtheorem{remark}[theorem]{Remark}

\numberwithin{equation}{section}

\begin{document}

\title[]{Elements of the $q$-Askey scheme in the algebra of symmetric functions}

\author{Cesar Cuenca and Grigori Olshanski}

\begin{abstract}
The classical $q$-hypergeometric orthogonal polynomials are assembled into a hierarchy called the $q$-Askey scheme. At the top of the hierarchy, there are two closely related families, the Askey-Wilson and $q$-Racah polynomials. As it is well known, their construction admits a generalization leading to remarkable orthogonal symmetric polynomials in several variables.

We construct an analogue of the multivariable $q$-Racah polynomials in the algebra of symmetric functions. Next, we show that our $q$-Racah symmetric functions can be degenerated into the big $q$-Jacobi symmetric functions, introduced in a recent paper by the second author. The latter symmetric functions admit further degenerations leading to new symmetric functions, which are analogues of $q$-Meixner and Al-Salam--Carlitz polynomials.

Each of the four families of symmetric functions ($q$-Racah, big $q$-Jacobi, $q$-Meixner, and Al-Salam--Carlitz) forms an orthogonal system of functions with respect to certain measure living on a space of infinite point configurations. The orthogonality measures of the four families are of independent interest. We show that they are linked by limit transitions which are consistent with the degenerations of the corresponding symmetric functions.
\end{abstract}

\maketitle

\tableofcontents

\section{Introduction}

\subsection{Preface}

A common feature of the \emph{hypergeometric} orthogonal polynomials (Hermite, Laguerre, Jacobi,  Charlier, Meixner, Hahn, Krawtchouk, Wilson,  Racah, and  their relatives) is that they are eigenfunctions of second order differential or difference operators and can be expressed through terminating hypergeometric series ${}_p F_q$, which range from ${}_2 F_0$ (the case of Hermite) to ${}_4 F_3$ (the case of Wilson and Racah). The hypergeometric polynomials  are conveniently organized into the \emph{Askey scheme} \cite{KS}, \cite{KLS}, which describes a multitude of limit transitions between various families. 

The Askey scheme has a $q$-version, called the  \emph{$q$-Askey scheme} \cite{KS}, \cite{KLS}. It describes a parallel, and even richer, hierarchy of the \emph{$q$-hypergeometric polynomials}. The latter ones (as suggested by their name) are eigenfunctions of  $q$-difference operators and can be expressed through terminating $q$-hypergeometric series ${}_p\phi _q$.  At the top of the $q$-Askey scheme are the Askey--Wilson and $q$-Racah polynomials, which are expressed through ${}_4\phi_3$ series. 

The goal of the present paper is to show that a partial analogue of the $q$-Askey scheme exists in the algebra of symmetric functions $\Sym$. Namely, we construct four inhomogeneous bases in $\Sym$ --- they consist of what we call  the \emph{$q$-Racah, big $q$-Jacobi, $q$-Meixner, and Al-Salam--Carlitz symmetric functions}, respectively. Each basis depends on several parameters. The basis elements are orthogonal with respect to a scalar product in $\Sym$ induced by a map $\Sym\to L^2(\Om,M)$, where $\Om$ is a space of infinite point configurations on a $q$-grid and $M$ is a probability measure on $\Om$. \footnote{For the $q$-Racah functions, this statement needs some amendment.} The bases and the orthogonality measures are linked by limit transitions
$$
\text{$q$-Racah $\longrightarrow$ big $q$-Jacobi $\longrightarrow$ $q$-Meixner $\longrightarrow$ Al-Salam--Carlitz,} 
$$
which mimic the limit transitions between the univariate $q$-hypergeometric polynomials of the same name \cite{KS}, \cite{KLS}, \cite{Koo-2011}. Informally, our hierarchy is related to the  hierarchy 
$$
{}_4\phi_3 \longrightarrow {}_3\phi_2 \longrightarrow {}_2\phi_1 \longrightarrow {}_2\phi_0 
$$
of $q$-hypergeometric functions. 

In previous work by the second author, symmetric function analogues of the big $q$-Jacobi polynomials were investigated in \cite{Ols-2017}.
The present work arose from the desire to understand whether they are an isolated phenomenon or (as we had hoped) similar orthogonal bases in $\Sym$ exist also at other levels of the $q$-hypergeometric hierarchy. We were particularly interested in symmetric functions related to Askey--Wilson/$q$-Racah polynomials, which are at the highest level ${}_4\phi_3$  of the $q$-Askey scheme (recall that the big $q$-Jacobi polynomials lie one step lower, at the ${}_3\phi_2$ level).

Let us explain our motivation a bit more. We proceeded from the analogy with the problem of harmonic analysis on infinite-dimensional classical groups and symmetric spaces \cite{Ols-2003}, \cite{BO-2005}, \cite{Cu-2018+}.  That problem asks for the decomposition of certain unitary representations into irreducible representations, and it is reduced to the description of certain random point processes --- probability measures on infinite point configurations.  The measures in question can be defined combinatorially, with the aid of discrete hypergeometric polynomials ---  of type ${}_3F_2$ in the context of \cite{BO-2005}, and of type ${}_4F_3$ in the context of \cite{Cu-2018+}.  As shown in \cite{GO-2016}, the construction of \cite{BO-2005} admits a (non-evident) $q$-version that makes use of $q$-hypergeometric polynomials of type ${}_3\phi_2$. That is why we were aimed at extending the results of \cite{GO-2016}, and its companion paper \cite{Ols-2017}, to the ${}_4\phi_3$ level. 
Then, after the discovery of the $q$-Racah orthogonal symmetric functions, we realized that one can also move in the opposite direction, going down to lower levels of the hierarchy, giving rise to a partial analogue of the $q$-Askey scheme in the ring of symmetric functions.

From the viewpoint  of the Macdonald--Koornwinder theory of multivariate basic hypergeometric polynomials, we are working with the elementary case of equal Macdonald parameters, $q=t$. However, we believe that this simplifying constraint can be removed \footnote{In what concerns the results of \cite{GO-2016}, this is  done in \cite{Ols-2018++}.}. Our main goal in the present work was to show the existence of some infinite-dimensional version of the $q$-Askey scheme, and we wanted to do it first with a minimum number of technicalities.  

Other constructions of symmetric functions out of various $N$-variate orthogonal hypergeometric polynomials were developed in \cite[Section 7]{Rains}, \cite{SV}, \cite{DH}, \cite{Ols-2011}, \cite{Ols-2012}, \cite{Petrov}. A common idea of these works (first put forward in \cite{Rains}) consists in the use of analytic continuation in the parameter $N$ (or $t^N$, in \cite{Rains}). Apart from \cite{Rains}, all the papers cited above deal with non-basic polynomials (Jacobi, Meixner, Laguerre, Charlier, and Hermite). The construction of  \cite[Section 7]{Rains} deals with Koornwinder polynomials and produces `lifted Koornwinder symmetric functions' with an additional parameter $T$, which arises from $t^N$ through analytic extrapolation. The construction of this work seems to be different from ours.

\subsection{General properties of the $q$-Racah, big $q$-Jacobi, $q$-Meixner, and Al-Salam--Carlitz symmetric functions}

We use the common notation $\Phi_\la$ for the symmetric functions in question. Here $\la$ ranges over the set $\Y$ of partitions (identified with their Young diagrams). Apart from $\la$, the functions $\Phi_\la$ depend on several continuous parameters, which we suppress from the notation. Let us describe some general properties of the functions $\Phi_\la$ and, whenever appropriate, compare them to the properties of the corresponding univariate, orthogonal polynomials.  Below, $\Sym$ denotes the algebra of symmetric functions.

\smallskip 

\noindent$\bullet$ \emph{Triangularity}. The elements $\Phi_\la$ are inhomogeneous and 
$$
\Phi_\la=S_\la+\text{lower degree terms},
$$
where $S_\la\in\Sym$ denotes the Schur symmetric functions with index $\la$ and `lower degree terms' denotes a linear combination of Schur functions $S_\mu$, with $\mu\subset\la$ (strict inclusion). 

\smallskip

\noindent$\bullet$ \emph{Expansion on interpolation symmetric functions}. The following expansion holds
\begin{equation}\label{eq1.A1}
\Phi_\la=I_\la+\sum_{\mu\subset\la}\si(\la,\mu)I_\mu,
\end{equation}
where $\si(\la,\mu)$ are certain explicit coefficients and the elements $I_\mu\in\Sym$ denote what we call the \emph{interpolation symmetric functions} --- a certain infinite-dimensional version of the multivariate symmetric interpolation polynomials studied in the 90's by  Knop, Okounkov, and Sahi (see \cite{Ok-1998}).
The interpolation symmetric functions \textit{of type A} were already present in the literature, \cite{Ols-2018+}, \cite{Cu-18}, and were used in \cite{Ols-2017} to define the big $q$-Jacobi symmetric functions.
To define the $q$-Racah symmetric functions, we had to define here another kind of interpolation symmetric functions, those of \textit{type BC}.
The expansions of the form \eqref{eq1.A1} are counterparts of the $q$-hypergeometric representations of the corresponding polynomials from the $q$-Askey scheme, i.e., the expressions in terms of terminating $q$-hypergeometric series.

\smallskip
\noindent$\bullet$ \emph{Formal orthogonality}. 
Let $\{\varphi_\ell: \ell=0,1,2,\dots\}$ be an arbitrary system of monic, univariate, orthogonal polynomials on the real line, such that $\deg \varphi_n = n$; in particular, $\varphi_0 = 1$.
A common property of these systems is that $\E(\varphi_\ell\varphi_m)=0$ for $\ell\ne m$, where $\E:\R[x]\to \R$ stands for the associated \emph{moment functional}, defined by 
$$
\text{$\E(f):=$ the coefficient of $\varphi_0=1$ in the expansion of $f\in\R[x]$ in the basis $\{\varphi_\ell\}$}.
$$
Note that $\E$ depends only on the system $\{\varphi_\ell\}$ itself and not on its orthogonality measure, which may be non-unique in the case of an indeterminate moment problem. Our systems $\{\Phi_\la:\la\in\Y\}$ enjoy a similar property: $\Phi_{\emptyset} = 1$, and $\E(\Phi_\la\Phi_\mu)=0$ for $\la\ne\mu$, with the moment functional defined by
$$
\text{$\E(F):=$ the coefficient of $\Phi_\emptyset=1$ in the expansion of $F\in\Sym$ in the basis $\{\Phi_\la\}$}.
$$
Let us emphasize that this is a special intrinsic property of the basis $\{\Phi_\la\}$. We call it \emph{formal orthogonality}. It is a necessary condition for the existence of an orthogonality measure.

\smallskip

\noindent$\bullet$
\emph{Orthogonality measures}.
In the case of univariate orthogonal polynomials on the real line, an orthogonality measure is supported by that line or by a proper subset. In our situation, when $\R[x]$ is replaced by the algebra $\Sym$, it is not a priori evident what should replace the real line. The problem is to find a probability space $(\Om,M)$ and a multiplicative linear map $F\mapsto \wt F$ from $\Sym$ to $L^2(\Om,M)$, consistent with the moment functional in the sense that
$$\E(F)=\int_{\om\in\Om} \wt F(\om)M(d\om), \quad F\in\Sym.$$
This implies that the scalar product $(F,G):=\E(FG)$ on the algebra $\Sym$, which is defined intrinsically from the system $\{\Phi_\la\}$, is induced by the scalar product of the Hilbert space $L^2(\Om,M)$, so that we may regard $M$ as an orthogonality measure. 

We show that such orthogonality measures do exist for all four families of symmetric functions $\Phi_\la$, under additional  constraints on their parameters (for formal orthogonality, we have more freedom).
In the $q$-Racah case, there is a truncation effect: the scalar product on $\Sym$, induced from $L^2(\Om,M)$, is degenerate since the image of $\Phi_\la$ in $L^2(\Om,M)$ turns out to be zero when the first coordinate of $\la$ becomes large enough. The same effect holds for the univariate $q$-Racah polynomials.

In our construction, the space $\Om$ consists of infinite point configurations on a $q$-grid $\De$.
By a point configuration, we mean a subset $X\subset\De$. The exact form of $\De$ and the support of the measure $M$ depend on the basis $\{\Phi_\la\}$.
For example, in the case of $q$-Racah and big $q$-Jacobi symmetric functions, $\De$ is the union of two geometric progressions accumulating near $0$ from both sides: 
$$
\De=\{\zeta_-, \zeta_-q,\zeta_-q^2,\dots\}\cup\{\dots, \zeta_+q^2, \zeta_+q, \zeta_+\}\subset\R_{<0}\cup\R_{>0},
$$
where $0<q<1$, and $\zeta_-<0<\zeta_+$ are (partially) determined by the parameters of the basis $\{\Phi_{\la}\}$.

In the present paper, we only establish the existence of the orthogonality measures $M$. We are planning to describe them in more detail in a later publication. Let us only point out that they are \emph{determinantal measures} whose correlation kernels can be explicitly computed in terms of basic hypergeometric functions.

\smallskip
\noindent$\bullet$ \emph{Approximation by $N$-variate symmetric polynomials}. The orthogonality properties of the symmetric functions $\Phi_\la$ are not seen from the expansion \eqref{eq1.A1}; they are established by a different construction, based on the approximation of symmetric functions $\Phi_\la$ by $N$-variate symmetric polynomials $\varphi_{\la\mid N}$. The latter polynomials are built from univariate polynomials $\varphi_\ell$ (which are taken respectively from the $q$-Racah, big $q$-Jacobi, $q$-Meixner or Al-Salam--Carlitz families) via the formula:
\begin{equation}\label{eq1.B}
\varphi_{\la\mid N}(x_1,\dots,x_N):=\frac{\det[\varphi_{\la_i+N-i}(x_j)]_{i,j=1}^N}{\prod_{1\le i<j\le N}(x_i-x_j)}.
\end{equation}
For the existence of the desired limit $\varphi_{\lambda\mid N} \to \Phi_\lambda$, it is necessary that the parameters of the univariate polynomials entering \eqref{eq1.B} vary together with $N$ in a special way; this is a subtle element of our construction. Our motivation here comes from asymptotic representation theory, where similar constructions first emerged (see \cite{BO-2005} and also \cite{GO-2016} and \cite{Cu-2018+}). It is worth noting that our construction is hinged on some fine properties of the polynomials we are dealing with.  At this point, it is not clear what exactly families of  polynomials of the $q$-Askey scheme admit such a lift to the algebra of symmetric functions.

\subsection{Structure of the paper and main theorems}

Throughout the paper, $q$ is a fixed real number with $0<q<1$. Recall that the symbol $\Y$ denotes the set of partitions (which are identified with their Young diagrams). Next, $\Y(N)\subset\Y$ is the subset of partitions with at most $N$ nonzero parts. 

In Section \ref{sect2}, we first recall some known facts about the \emph{multiparameter Schur polynomials} and related Cauchy-type identities \cite{Ok-1998}, \cite{Molev}, \cite{Ols-2018+}. With the aid of these polynomials we introduce two families of interpolation symmetric functions, denoted as $I^A_\mu(\ccdot;q)$ and $I^{BC}_\mu(\ccdot;q;s)$. Here the index $\mu$ ranges over $\Y$. 

Section \ref{sect3} is devoted to the \emph{$q$-Racah symmetric functions}. They depend, apart from $q$,  on a quadruple $(s_0,s_2,s_2,s_3)$ of complex parameters and are denoted by $\Phi^{qR}_\la(\ccdot;q; s_0,s_1,s_2,s_3)$. We define them through the expansion on $BC$-type interpolation functions. Note that $\Phi^{qR}_\la(\ccdot;q; s_0,s_1,s_2,s_3)$ is invariant under permutations of $s_i$'s. The main results of the section are Theorems \ref{thm:formalorth} and \ref{thm:qRorth}.

In Theorem \ref{thm:formalorth} we establish the formal orthogonality property of the functions $\Phi_\la^{qR}(\ccdot;q; s_0,s_1,s_2,s_3)$, $\la\in\Y$; this result holds under mild assumptions on $(s_0,s_1,s_2,s_3)$. 

In Theorem \ref{thm:qRorth}, we prove the existence of orthogonality measures under special constraints on the parameters. In particular, we require that two of them, say $s_2$ and $s_3$ (the choice is irrelevant), are real and satisfy $s_2s_3=-q^{-K}$, for some positive integer $K$. Then the index $\la$ is subject to the condition $\la_1\le K$; otherwise, $\Phi_\la^{qR}(\ccdot;q; s_0,s_1,s_2,s_3)\equiv0$ on the support of the orthogonality measure. This effect is reminiscent of the fact that the classical $q$-Racah polynomials form a finite orthogonal system, because their weight function lives on a finite set. 

In Section \ref{sect4} we deal with the \emph{big $q$-Jacobi symmetric functions} $\Phi^{bqJ}_\la(\ccdot;q;a,b,c,d)$, where $(a,b,c,d)$ is another quadruple of parameters. The functions $\Phi^{bqJ}_\la(\ccdot;q;a,b,c,d)$ are conveniently defined in terms of the expansion in the basis $\{I^A_\mu(\ccdot;q)\}$; they were introduced and studied in \cite{Ols-2017}, and we briefly overview the results of that paper.  Our new result is Theorem \ref{thm:qRbqJ}: it describes a limit transition from the $q$-Racah functions to the big $q$-Jacobi functions, which holds for their orthogonality measures as well. 

Note that in the context of classical orthogonal polynomials, the limit transition `$q$-Racah $\to$ big $q$-Jacobi' on the level of orthogonality measures appeared relatively recently (see \cite{Koo-2011}), while the earlier literature mentioned only a different kind of limit, not consistent with the measures. 

In Sections \ref{sect5} and \ref{sect6}, we show that the big $q$-Jacobi functions $\Phi^{bqJ}_\la(\ccdot;q;a,b,c,d)$ can be further degenerated as one or both of the parameters $(a,b)$ go to $0$ (Theorems \ref{thm:bqJqM} and \ref{thm:qMASC}). In this way we obtain the \emph{$q$-Meixner symmetric functions} and \emph{Al-Salam--Carlitz symmetric functions}, respectively. 

We also discuss the degeneration of the orthogonality measures of the big $q$-Jacobi functions when one or both of the parameters $(a, b)$ tend to $0$.
An interesting fact is that here (in contrast to the case of symmetric functions) the result is sensitive with respect to the tuning of the limit regime. Namely, we let the parameters $a$ and $b$ approach $0$ along some grids of the form $\const q^\Z$, and then it turns out that the limit measure depends on the choice of the grid. We announce these results in Theorems \ref{thm5.A} and \ref{thm6.B}; their proof will be given elsewhere.
(Such an effect already holds for univariate polynomials, and it is not surprising because the moment problem associated with the $q$-Meixner and Al-Salam--Carlitz polynomials is indeterminate \cite{C}.)

\subsection{Acknowledgements}

The authors are grateful to Alexei Borodin for helpful comments.
The first author was partially supported by NSF Grant DMS-1664619.

\section{Newton interpolation in $\Sym$}\label{sect2}

Fix a field $\F$ of characteristic $0$. The classical Newton interpolation polynomials with nodes $c_0,c_1,\dots\in\F$ are the polynomials
$$
(x\mid c_0, c_1,\dots)^m:=(x-c_0)(x-c_1)\dots(x-c_{m-1}), \qquad m=1,2,\dots;
$$
for $m = 0$, we set $(x\mid c_0, c_1,\dots)^0 := 1$.
The $m$th polynomial has degree $m$, highest coefficient $1$, and it vanishes at the first $m$ nodes. If the $c_i$'s are pairwise distinct, these properties characterize the polynomials uniquely.

Our aim in this section is to construct analogues of the Newton interpolation polynomials in the algebra of symmetric functions.
Let  $\Sym(N)$ be the algebra of $N$-variate symmetric polynomials over $\F$  and  $\Sym$ be the algebra of symmetric functions over $\F$.

First we briefly overview a number of known facts. 

\subsection{Multiparameter Schur polynomials}\label{sec:multischur}

(References: \cite{Ok-1998} and \cite[Section 4]{Ols-2018+}.)

Let $c_0,c_1,\dots$ be as above. The \emph{multiparameter Schur polynomials} in $N$ variables are defined by 
\begin{equation*}
s_{\mu\mid N}(x_1,\dots,x_N\mid c_0,c_1,\dots)
:= \frac{\det[(x_j\mid c_0,c_1,\dots)^{\mu_i+N-i}]}{V(x_1,\dots,x_N)}. 
\end{equation*}
Here we assume that $\mu$ is a partition of length at most $N$; overwise the polynomial is set to be equal to $0$. 
The set of partitions of length $\leq N$ will be denoted by $\Y(N)$.

The polynomials $s_{\mu\mid N}(\ccdot\mid c_0,c_1,\dots)$ are an analogue of the Newton polynomials in $\Sym(N)$.
The highest degree homogeneous component of $s_{\mu\mid N}(\ccdot\mid c_0,c_1,\dots)$ is the Schur polynomial $S_{\mu\mid N}(\ccdot)$. The following may be called the \emph{quasi-stability property}:
$$
s_{\mu\mid N}(x_1,\dots,x_N\mid c_0,c_1,\dots)\Big|_{x_N=c_0} =
s_{\mu\mid N-1}(x_1,\dots,x_{N-1}\mid c_1,c_2,\dots)
$$
(note a shift of parameters on the right-hand side).

For $\la\in\Y(N)$ set
$$
X_N(\la) := (c_{\la_1+N-1}, c_{\la_2+N-2},\dots, c_{\la_N})\in\F^N. 
$$
The following is called the \emph{extra vanishing property}: 
$$
s_{\mu\mid N}(X_N(\la)\mid c_0,c_1,\dots)=0 \quad \text{unless $\mu\subseteq\la$}
$$
(here and in the sequel we identify partitions with their Young diagrams; so $\mu\subseteq\la$ means that the diagram $\mu$ is contained in the diagram $\la$).

In particular, $s_{\mu\mid N}(X_N(\la)\mid c_0,c_1,\dots)=0$ if $|\la|<|\mu|$, where we use the standard notation
$$
|\la|:=\la_1+\la_2+\dots\,.
$$

Conversely, if the elements $c_i$ are pairwise distinct, then $s_{\mu\mid N}(\ccdot\mid c_0,c_1,\dots)$ can be characterized as the unique polynomial in $\Sym(N)$ such that its highest degree homogeneous component is the Schur polynomial $S_{\mu\mid N}(\ccdot)$ and $s_{\mu\mid N}(X_N(\la)\mid c_0,c_1,\dots)=0$, for all $\la\in\Y(N)$ with $|\la|<|\mu|$.

\subsection{Dual Schur functions and Cauchy identity} (References: \cite{Molev} and \cite[Section 4]{Ols-2018+}.)

Let $d_0,d_1,\dots$ be another sequence of parameters in $\F$. The following formula defines a family of symmetric rational
functions in $K$ variables:
\begin{equation}\label{eq.sigma}
\si_{\mu\mid K}(y_1,\dots,y_K\mid d_0, d_1, \dots)
:=\frac{\det\left[\dfrac1{(y_j\mid d_0, d_1,\dots)^{\mu_i+K-i}}\right]_{i,j=1}^K}
{\det\left[\dfrac1{(y_j\mid d_0, d_1,\dots)^{K-i}}\right]_{i,j=1}^K}.
\end{equation}
We call them the ($K$-variate) \emph{dual Schur functions} or \emph{$\si$-functions}, for short. It is assumed that $\mu\in\Y(K)$; otherwise the $\si$-function is set to be equal to $0$. If $d_0=d_1=\dots=0$, then the  $\si$-functions turn into the conventional Schur polynomials in variables $y_1^{-1},\dots,y_K^{-1}$. 

The $\si$-functions possess the following quasi-stability property
\begin{equation*}
\si_{\mu\mid K}(y_1,\dots,y_K\mid d_0, d_1,\dots)\big | _{y_K=\infty}
=\si_{\mu\mid K-1}(y_1,\dots,y_{K-1}\mid d_1,d_2,\dots).
\end{equation*}

The $\si$-functions can be regarded as elements of the algebra $\F[[y_1^{-1},\dots,y_K^{-1}]]^{S_K}$ (symmetric power series). As such, they form a \emph{topological basis} of that algebra. This means that each element of $\F[[y_1^{-1},\dots,y_K^{-1}]]^{S_K}$ can be represented, in a unique way, as a (possibly infinite) linear combination of the $\si$-functions. This follows from the fact that 
$$
\si_{\mu\mid K}(y_1,\dots,y_K\mid d_0, d_1,\dots) = S_{\mu\mid K}(y_1^{-1},\dots,y_K^{-1})+\dots,
$$
where the dots denote higher degree terms in the variables $y_i^{-1}$ (equivalently, lower degree terms in the variables $y_i$).

\begin{proposition}[Cauchy identity]\label{prop2.A}
For $N\ge K\ge1$, one has
\begin{multline}\label{eq.cauchy}
\sum_{\mu\in\Y(K)}s_{\mu \mid N}(x_1,\dots,x_N\mid c_0,c_1,\dots)
\si_{\mu \mid K}(y_1,\dots,y_K\mid c_{N-K+1},c_{N-K+2},\dots)
\\
=\prod_{j=1}^K\frac{(y_j-c_0)\dots(y_j-c_{N-1})}{(y_j-x_1)\dots(y_j-x_N)},
\end{multline}
where both sides are regarded as formal power series in $x_1,\dots,x_N,y_1^{-1},\dots,y_K^{-1}$.
\end{proposition}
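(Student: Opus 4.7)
The plan is to clear the determinantal denominators in the definitions of $s_{\mu\mid N}$ and $\sigma_{\mu\mid K}$, reducing \eqref{eq.cauchy} to a polynomial identity that can be verified by Laplace expansion, Cauchy--Binet, a telescoping summation, and Cauchy's determinant formula. Multiplying \eqref{eq.cauchy} by $V(x):=\prod_{1\le i<j\le N}(x_i-x_j)$ and by the denominator $D(y):=\det[1/(y_j\mid c_{N-K+1},\dots)^{K-i}]_{i,j=1}^K$ from \eqref{eq.sigma} clears every rational factor on the LHS; a short column-clearing calculation evaluates $D(y)$ as $\prod_{a<b}(y_b-y_a)\big/\prod_{j=1}^K\prod_{l=N-K+1}^{N-1}(y_j-c_l)$, which likewise simplifies the RHS.

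First I would Laplace-expand the $N\times N$ determinant $\det[(x_j\mid c)^{\mu_i+N-i}]_{i,j=1}^N$ along its top $K$ rows. Since $\mu\in\Y(K)$ forces $\mu_{K+1}=\dots=\mu_N=0$, the resulting lower $(N-K)\times(N-K)$ block is independent of $\mu$ and collapses to a sub-Vandermonde $\pm V(x_{J^c})$ for each $K$-element subset $J=\{j_1<\dots<j_K\}\subset\{1,\dots,N\}$. For each fixed $J$, the reparametrization $\ell_i:=\mu_i+K-i$ turns the remaining sum over $\mu\in\Y(K)$ into a Cauchy--Binet sum over strictly decreasing tuples $\ell_1>\dots>\ell_K\ge 0$; this collapses to the $K\times K$ determinant $\det[T_{a,b}]_{a,b=1}^K$ whose entries are the bilinear series
$$T_{a,b}=\sum_{\ell\ge0}\frac{(x_{j_a}\mid c)^{\ell+N-K}}{(y_b\mid c_{N-K+1},\dots)^{\ell}}.$$

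The next step is to sum $T_{a,b}$ using the elementary telescoping identity
$$\frac{1}{y-x}=\sum_{\ell\ge0}\frac{\prod_{i=0}^{\ell-1}(x-a_i)}{\prod_{i=0}^{\ell}(y-a_i)},$$
valid as a formal power series in $y^{-1}$ for any sequence $(a_i)$ and proved by iterating $\tfrac{1}{y-x}=\tfrac{1}{y-a}+\tfrac{x-a}{(y-a)(y-x)}$. Applied with $a_i=c_{N-K+1+i}$, it gives the closed form $T_{a,b}=(x_{j_a}\mid c)^{N-K}(y_b-c_{N-K})/(y_b-x_{j_a})$. Factoring out the row and column prefactors from $\det[T_{a,b}]$ and invoking Cauchy's determinant formula for $\det[1/(y_b-x_{j_a})]$ then yields new Vandermonde factors together with the reciprocal of $\prod_{a,b}(y_b-x_{j_a})$.

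Assembling all these pieces reduces \eqref{eq.cauchy} to the polynomial identity
$$\sum_{J}\mathrm{sgn}(J)\,V(x_J)V(x_{J^c})\prod_{j\in J}(x_j\mid c)^{N-K}\prod_{b=1}^K\prod_{j\in J^c}(y_b-x_j)=V(x)\prod_{b=1}^K(y_b\mid c)^{N-K},$$
which is itself the Laplace expansion along the top $K$ rows of the $N\times N$ determinant of the matrix whose rows are $(x_j\mid c)^{N-K+i-1}$ for $i\le K$ and $\prod_{b=1}^K(y_b-x_j)\cdot(x_j\mid c)^{i-K-1}$ for $i>K$. That single determinant is then evaluated by row reduction against the Newton matrix $[(x_j\mid c)^{i-1}]_{i,j=1}^N$, whose determinant equals $V(x)$. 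The main obstacle will be this last step: tracking all the sign factors coming from the three Vandermondes, Cauchy's formula, and the Laplace expansion, and carrying out the final row reduction cleanly. Everything else is a multi-stage but mechanical application of Cauchy--Binet, once the reparametrization $\ell_i=\mu_i+K-i$ is in place.
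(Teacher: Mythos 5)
Your argument is essentially correct, but it cannot be compared line-by-line with the paper's proof for a simple reason: the paper gives no proof of Proposition \ref{prop2.A} at all --- it is quoted from the literature (Molev's paper on double Schur functions and Olshanski's \emph{Interpolation Macdonald polynomials and Cauchy-type identities}), where it is established by rather different means (branching/comultiplication structure and stability properties of the multiparameter Schur polynomials). What you supply is a self-contained elementary verification, and the skeleton is sound: clearing $V(x)$ and the denominator determinant $D(y)$ (your evaluation of $D(y)$ is correct), Laplace expansion along the top $K$ rows using $\mu_{K+1}=\dots=\mu_N=0$, Cauchy--Binet over $\ell_i=\mu_i+K-i$, summation of the entries $T_{a,b}$, Cauchy's determinant, and a final Laplace/row-reduction identity. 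I checked the pivotal closed form $T_{a,b}=(x_{j_a}\mid c)^{N-K}(y_b-c_{N-K})/(y_b-x_{j_a})$ and the reduction of the right-hand side to $V(x)V(y)\prod_b (y_b\mid c)^{N-K}(y_b-c_{N-K})/\prod_i(y_b-x_i)$; both are right, and your terminal polynomial identity is indeed the Laplace expansion of the displayed $N\times N$ matrix $M$, whose determinant factors as $\det A\cdot V(x)$ with $A$ block-triangular and $\det A=\pm\prod_{l=0}^{N-K-1}\prod_b(y_b-c_l)=\pm\prod_b(y_b\mid c)^{N-K}$ (the coefficient of $(x\mid c)^{i-K-1}$ in $\prod_b(y_b-x)(x\mid c)^{i-K-1}$ being $\prod_b(y_b-c_{i-K-1})$).

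Two small points to tighten in a full write-up. First, your telescoping identity does not apply verbatim to $T_{a,b}$: the numerator factors of $(x\mid c)^{\ell+N-K}/(x\mid c)^{N-K}$ start at $c_{N-K}$ while the denominator factors of $(y_b\mid c_{N-K+1},\dots)^{\ell}$ start at $c_{N-K+1}$, so you must first peel off $(x-c_{N-K})$ and write the sum as $1+(x-c_{N-K})\cdot\frac{1}{y-x}=\frac{y-c_{N-K}}{y-x}$; the stated closed form is then correct. Second, the sign bookkeeping you flag is real but harmless: every $J$-dependent sign on both sides has the form (Laplace sign for $J$)$\times$(orientation signs of the sub-Vandermondes), so any residual discrepancy is a $J$-independent constant such as $(-1)^{\binom{N-K}{2}}$ coming from reversing the order of exponents, and it is pinned to $+1$ by comparing the coefficient of the term $\mu=\emptyset$ (equivalently, the constant term in $y_j^{-1}$, where both sides equal $1$). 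With those two remarks your proof is complete.
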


Formula \eqref{eq.cauchy} may be viewed as a generating series for the multiparameter Schur polynomials. We will use it just in this role.

\subsection{Approximation $\Sym(N)\to\Sym$}\label{sec:approx}

(Reference: \cite[Subsection 2.4]{Ols-2017}.)
In this subsection we assume that $\F$ is equipped with a nontrivial topology. For instance, $\F$ is $\R$ or $\C$. Another example is $\F=\C(q)$, where $q$ is a formal parameter and the topology in $\C(q)$ is induced by the embedding into the local field $\C((q))$. Thus, for instance, the formula $1+q+q^2+\dots=\frac1{1-q}$ makes the sense in $\C(q)$.

Denote by $\Sym_{\le d}$ the subspace in $\Sym$ formed by the elements of degree at most $d$, where $d=0,1,2,\dots$\,.
In a similar way, we define the subspace $\Sym_{\le d}(N)$. These subspaces have finite dimension and the canonical projection  $\Sym\to\Sym(N)$ taking an element $F\in\Sym$ into the polynomial $F(x_1,\dots,x_N,0,0,\dots)$ determines a projection $\Sym_{\le d}\to\Sym_{\le d}(N)$. Under the condition $N\ge d$, the latter projection is a linear isomorphism and hence has the inverse:
$$
\iota_{d,N}: \Sym_{\le d}(N)\to \Sym_{\le d}, \qquad N\ge d.
$$
Evidently, $\iota_{d+1,N}$ extends $\iota_{d,N}$ for every $N\ge d+1$.

\begin{definition}\label{def2.convergence}
Let us say that a sequence $\{F_N\in\Sym(N): N\ge N_0\}$ \emph{converges} to a certain element $F\in\Sym$ if the following two conditions hold:

(1) $\sup_N\deg F_N<\infty$.

(2) For every $d$ large enough one has
$$
\lim_{N\to\infty}\iota_{d,N}(F_N) = F
$$
in the finite-dimensional $\F$-space $\Sym_{\le d}$.  Then we write $F_N\to F$ or $\lim_N F_N=F$.
\end{definition}

For further use, it is convenient to reformulate condition (2) in the following way:

\smallskip

$(2')$ For every $L=1,2,\dots$ and any vector $(x_1,\dots,x_L)\in\F^L$, the sequence $F_N(x_1,\dots,x_L,0^{N-L})$ has a limit in $\F$. (Here $0^{N-L}$ stands for the $(N-L)$-tuple $(0,\dots,0)$.)

\smallskip

Then the limit element $F\in\Sym$ is uniquely determined from the relations
$$
F(x_1,\dots,x_L,0^\infty)=\lim_{N\to\infty}F_N(x_1,\dots,x_L,0^{N-L}),
$$
where $0^\infty$ stands for the infinite sequence of $0$'s, $L=1,2,\dots$, and $(x_1,\dots,x_L)\in\F^L$.

\subsection{Large-$N$ limits of multiparameter Schur polynomials}

We keep to the assumption that $\F$ is a topological field. So far, the sequence $\{c_0,c_1,\dots\}$ was fixed, but now we suppose that it depends on a parameter $N=1,2,\dots$\,. So we write instead $\{c^{(N)}_0,c^{(N)}_1,\dots\}$.

Below $h_1,h_2,\dots$ denote the complete homogeneous symmetric functions and $H(z)$ is their generating series,
$$
H(z):=1+\sum_{k=1}^\infty h_k z^k \in\Sym[[z]].
$$

\begin{proposition}\label{prop2.B}
{\rm(i)} The following two conditions on the array $\{c^{(N)}_0,c^{(N)}_1,\dots; N=1,2,\dots\}$  guarantee the existence of the limits, in the sense of Definition \ref{def2.convergence},
$$
I_\mu:=\lim_{N\to\infty} s_{\mu\mid N}(\ccdot \mid c^{(N)}_0,c^{(N)}_1,\dots)\in\Sym
$$
for any $\mu\in\Y${\rm:}

{\rm(1)} For every $j\in\Z$, there exists a limit
$$
\wt c_j:=\lim_{N\to\infty}c^{(N)}_{N+j}.
$$

{\rm(2)} For every $k=1,2,\dots$, there exists a limit
$$
r_k:=\lim_{N\to\infty}\left\{(c^{(N)}_0)^k+\dots(c^{(N)}_{N-1})^k\right\}.
$$

{\rm(ii)} The limit elements $I_\mu\in\Sym$ satisfy the following collection of Cauchy identities{\rm:}  for every $K=1,2,\dots$ 
\begin{equation}\label{eq2.B1}
\sum_{\mu\in\Y(K)}I_\mu\si_{\mu\mid K}(y_1,\dots,y_K\mid \wt c_{-K+1}, \wt c_{-K+2},\dots)=\prod_{j=1}^K H(y_j^{-1})\exp\left\{-\sum_{k=1}^\infty \dfrac{r_k}{k y_j^k}\right\},
\end{equation}
where both sides are viewed as elements of\/ $\Sym\otimes\F[[y_1^{-1},\dots,y_K^{-1}]]^{S_K}$. 

\smallskip

{\rm(iii)} The elements $I_\mu$ are uniquely determined by these Cauchy identities. 

\smallskip

{\rm(iv)} $I_{\mu} = S_{\mu} + $lower degree terms.
\end{proposition}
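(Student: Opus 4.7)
The plan is to derive all four assertions by taking the $N\to\infty$ limit in the Cauchy identity \eqref{eq.cauchy} of Proposition~\ref{prop2.A}, with $c_i$ replaced by $c^{(N)}_i$ and $x$-variables specialized to $(x_1,\dots,x_L,0^{N-L})$ for each fixed $L$. For the right-hand side, this substitution yields $\prod_{i=1}^N(y_j-x_i)=y_j^{N-L}\prod_{i=1}^L(y_j-x_i)$. Combined with $y_j^L\prod_{i=1}^L(y_j-x_i)^{-1}=H(x_1,\dots,x_L;y_j^{-1})$ in $\F[[y_j^{-1}]]$ and the formal identity $\prod_{i=0}^{N-1}(y_j-c^{(N)}_i)=y_j^N\exp\!\bigl(-\sum_{k\ge 1}S_k^{(N)}/(ky_j^k)\bigr)$, where $S_k^{(N)}:=\sum_{i=0}^{N-1}(c^{(N)}_i)^k$, the right-hand side of \eqref{eq.cauchy} becomes
\[
\prod_{j=1}^K H(x_1,\dots,x_L;y_j^{-1})\,\exp\!\Bigl(-\sum_{k\ge 1}\frac{S_k^{(N)}}{ky_j^k}\Bigr).
\]
By condition~(2), $S_k^{(N)}\to r_k$, and this converges coefficient-wise in $y^{-1}$ to the specialization of the right-hand side of \eqref{eq2.B1} at $(x_1,\dots,x_L,0^\infty)$.

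For the left-hand side, I would use the expansion $\sigma_{\mu|K}(y\mid d)=S_{\mu|K}(y^{-1})+\sum_{|\nu|>|\mu|}a_{\mu\nu}(d)\,S_{\nu|K}(y^{-1})$ in the Schur topological basis of $\F[[y_1^{-1},\dots,y_K^{-1}]]^{S_K}$, where each $a_{\mu\nu}(d)$ is a polynomial in finitely many $d_l$. Matching the coefficient of $S_{\nu|K}(y^{-1})$ on both sides of \eqref{eq.cauchy} then yields the triangular relation
\[
c_\nu^{(N)}=s_{\nu|N}(x_1,\dots,x_L,0^{N-L}\mid c^{(N)})+\sum_{|\mu|<|\nu|}s_{\mu|N}(\cdots\mid c^{(N)})\,a_{\mu\nu}\bigl(c^{(N)}_{N-K+1},\dots\bigr),
\]
where $c_\nu^{(N)}$ is the $S_{\nu|K}(y^{-1})$-coefficient of the right-hand side. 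By the previous paragraph, $c_\nu^{(N)}$ converges; condition~(1) implies that each $a_{\mu\nu}(c^{(N)}_{N-K+1},\dots)$ converges as well. Induction on $|\nu|$, with base case $\nu=\emptyset$ (where $s_{\emptyset|N}\equiv 1$), then produces the limit $\bar s_\nu(x_1,\dots,x_L)=\lim_N s_{\nu|N}(x_1,\dots,x_L,0^{N-L}\mid c^{(N)})$.

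Since $\deg s_{\nu|N}=|\nu|$ is bounded uniformly in $N$, criterion~$(2')$ of Definition~\ref{def2.convergence} applies and yields $I_\nu\in\Sym$ with $s_{\nu|N}\to I_\nu$, proving~(i). Passing to the limit in \eqref{eq.cauchy} then gives~(ii). For~(iii), the family $\{\sigma_{\mu|K}(y\mid\tilde c_{-K+1},\dots)\}_{\mu\in\Y(K)}$ is a topological basis of $\F[[y_1^{-1},\dots,y_K^{-1}]]^{S_K}$, so the $\Sym$-valued coefficients in \eqref{eq2.B1} are uniquely determined; since $\Y=\bigcup_K\Y(K)$, every $I_\mu$ is thereby pinned down. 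For~(iv), the top homogeneous component of $s_{\mu|N}$ is $S_{\mu|N}$, which converges to $S_\mu$ under $\iota_{|\mu|,N}$, while the lower-degree part converges separately within $\Sym_{<|\mu|}$. The main difficulty is the triangular induction in the second paragraph: one must carefully interchange coefficient-wise limits in $y^{-1}$ with the $\sigma$-expansion, and this hinges on combining the triangular structure of the $\sigma$-basis with the convergence of the tail nodes $c^{(N)}_{N-K+1},\dots$ guaranteed by condition~(1).
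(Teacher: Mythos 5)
Your proposal is correct and follows essentially the same route as the paper: both pass to the limit in the Cauchy identity \eqref{eq.cauchy} specialized at $(x_1,\dots,x_L,0^{N-L})$, using condition (1) for the convergence of the $\sigma$-functions and condition (2) for the convergence of the right-hand side, and then extract the coefficients. The only difference is that you spell out the triangular induction on $|\nu|$ in the Schur basis, a step the paper leaves implicit behind the remark that the $F_N$ are uniquely determined by the identity.
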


\begin{proof}
(i) To prove that the elements $F_N:=s_{\mu\mid N}(\ccdot \mid c^{(N)}_0,c^{(N)}_1,\dots)$ converge it suffices to check conditions $(1)$ and $(2')$ of the previous subsection.
The first condition is evident because $F_N$ has constant degree $|\mu|$. Let us check the second condition. The elements $F_N$ are uniquely determined from the Cauchy identity \eqref{eq.cauchy} with an arbitrary fixed $K$ provided it is large enough, so let us fix such a $K$. 

Let $L$ be any positive integer and $(x_1, \ldots, x_L) \in\F^L$ be arbitrary.
Below we abbreviate $X:=(x_1,\dots,x_L)$ and use the notation
$$
H(z;X, 0^{\infty}):=1+\sum_{k=1}^\infty h_k(x_1,\dots, x_L, 0, 0, \ldots)z^k.
$$

For large $N$, the quantities 
$$
F_N(X,0^{N-L}):=s_{\mu \mid N}(X, 0^{N-L}\mid c^{(N)}_0,c^{(N)}_1,\dots)
$$ 
are uniquely determined from the relation
\begin{multline}\label{eq2.B}
\sum_{\mu\in\Y(K)}F_N(X,0^{N-L})
\si_{\mu \mid K}(y_1,\dots,y_K\mid c^{(N)}_{N-K+1},c^{(N)}_{N-K+2},\dots)
\\
=\prod_{j=1}^K H(y_j^{-1}; X,0^\infty)\cdot\prod_{j=1}^K (1-c^{(N)}_0 y_j^{-1})\dots(1-c^{(N)}_{N-1}y_j^{-1}),
\end{multline}
which follows from \eqref{eq.cauchy}.

By virtue of the definition \eqref{eq.sigma} and the assumption (1), we have 
$$
\si_{\mu \mid K}(y_1,\dots,y_K\mid c^{(N)}_{N-K+1},c^{(N)}_{N-K+2},\dots)\to\si_{\mu \mid K}(y_1,\dots,y_K\mid \wt c_{-K+1},\wt c_{-K+2},\dots)
$$
for any fixed $\mu$.

Next, by virtue of the assumption (2), we have
\begin{multline*}
\prod_{j=1}^K (1-c^{(N)}_0 y_j^{-1})\dots(1-c^{(N)}_{N-1}y_j^{-1})\\
=\prod_{j=1}^K\exp\left\{-\sum_{k=1}^\infty\frac{(c^{(N)}_0)^k+\dots(c^{(N)}_{N-1})^k}{ky_j^k}\right\}
\longrightarrow \prod_{j=1}^K\exp\left\{-\sum_{k=1}^\infty\frac{r_k}{ky_j^k}\right\},
\end{multline*}
where the convergence holds coefficient-wise in $\F[[y_1^{-1},\dots,y_K^{-1}]]$. 

These two limit relations imply that the quantities $F_N(X,0^{N-L})$ converge in $\F$, which completes the proof of claim (i).

Now claims (ii) and (iii) are evident. 

(iv) We have $s_{\mu \mid N}( \ccdot \mid c_0^{(N)}, c_1^{(N)}, \ldots) = S_{\mu\mid N}(\cdot) + $lower degree terms, for all large enough $N$, regardless of the parameters $\{c_0^{(N)}, c_1^{(N)}, \ldots\}$. Therefore the limit $I_{\mu}(\cdot)$ satisfies the analogous property.
\end{proof}

\subsection{Example: symmetric functions $I^A_\mu(\ccdot;q)$}\label{sec:IA}

We assume that one of the following two conditions holds:

$\bullet$ either $\F=\C$ and $q\in\C\setminus\{0\}$ with $|q|<1$,

$\bullet$ or $\F=\C(q)$ and the topology in $\F$ is inherited from $\C((q))$.  

Next, we take
$$
c^{(N)}_i:=q^{N-i-1}, \qquad i=0,1,2,\dots\,.
$$
The corresponding multiparameter Schur polynomials will be denoted by $I^A_{\mu\mid N}(\ccdot;q)$:
$$
I^A_{\mu\mid N}(x_1,\dots,x_N;q):=s_{\mu\mid N}(x_1,\dots,x_N\mid q^{N-1}, q^{N-2},\dots).
$$
Below, we use standard terminology for infinite $q$--Pochhammer symbols, \cite{GR}, that is:
$$(x; q)_{\infty} := \prod_{k=0}^{\infty}(1 - xq^k); \quad (x_1, \ldots, x_m; q)_\infty := \prod_{j=1}^m{(x_j; q)_\infty}.$$

\begin{proposition}\label{prop2.C}
{\rm(i)} There exist  limit  symmetric functions
\begin{equation*}
I^A_\mu(\ccdot;q):=\lim_{N\to\infty}I^A_{\mu\mid N}(\ccdot;q), \qquad \mu\in\Y.
\end{equation*}

{\rm(ii)} For every $K=1,2,\dots$, the following Cauchy identity holds
\begin{equation*}
\sum_{\mu\in\Y(K)}I^A_\mu(\ccdot;q)\si_{\mu\mid K}(y_1,\dots,y_K\mid q^{K-2}, q^{K-3},\dots)=\prod_{j=1}^K H(y_j^{-1};\ccdot)(y_j^{-1};q)_\infty.
\end{equation*}

{\rm(iii)} $I_{\mu}^A(\ccdot; q) = S_{\mu}(\ccdot)\, +\, $lower degree terms.
\end{proposition}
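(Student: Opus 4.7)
The plan is to apply Proposition~\ref{prop2.B} directly to the specialization $c^{(N)}_i = q^{N-i-1}$. So I first verify that both hypotheses $(1)$ and $(2)$ of that proposition hold for this choice, then read off the Cauchy identity, and finally simplify the exponential factor on its right-hand side into an infinite $q$-Pochhammer symbol.

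\textbf{Step 1: Check the two conditions.} For condition~$(1)$, I compute
$$
\wt c_j \;=\; \lim_{N\to\infty} c^{(N)}_{N+j} \;=\; \lim_{N\to\infty} q^{N-(N+j)-1} \;=\; q^{-j-1},
$$
which is a genuine element of $\F$ (the limit is trivial since the sequence is constant). For condition~$(2)$, I have
$$
r_k \;=\; \lim_{N\to\infty}\sum_{i=0}^{N-1}(c^{(N)}_i)^k \;=\; \lim_{N\to\infty}\sum_{i=0}^{N-1}q^{k(N-i-1)} \;=\; \sum_{m=0}^\infty q^{km} \;=\; \frac{1}{1-q^k}.
$$
The convergence is in the sense appropriate to our $\F$: in $\C$ it converges because $|q|<1$, and in $\C(q)\subset\C((q))$ it converges because the $m$-th term belongs to $q^{km}\C[[q]]$. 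With conditions $(1)$ and $(2)$ verified, Proposition~\ref{prop2.B}(i) yields claim~(i), and Proposition~\ref{prop2.B}(iv) yields claim~(iii).

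\textbf{Step 2: Identify the parameters in the Cauchy identity.} Since $\wt c_j = q^{-j-1}$, the shifted sequence appearing in Proposition~\ref{prop2.B}(ii) is
$$
(\wt c_{-K+1},\wt c_{-K+2},\dots) \;=\; (q^{K-2},q^{K-3},\dots),
$$
matching the left-hand side of the Cauchy identity asserted in part~(ii).

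\textbf{Step 3: Simplify the exponential factor.} It remains to show that for each $j$,
$$
\exp\!\left\{-\sum_{k=1}^\infty \frac{r_k}{k y_j^k}\right\} \;=\; (y_j^{-1};q)_\infty.
$$
Substituting $r_k = 1/(1-q^k) = \sum_{m=0}^\infty q^{km}$ and swapping the order of summation (justified by the $\F[[y_j^{-1}]]$ topology, since each power $y_j^{-n}$ picks up only finitely many contributions),
$$
\sum_{k=1}^\infty \frac{r_k}{k y_j^k} \;=\; \sum_{m=0}^\infty \sum_{k=1}^\infty \frac{(q^m y_j^{-1})^k}{k} \;=\; -\sum_{m=0}^\infty \log\bigl(1-q^m y_j^{-1}\bigr).
$$
Exponentiating gives $\prod_{m=0}^\infty (1-q^m y_j^{-1}) = (y_j^{-1};q)_\infty$, as required. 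Taking the product over $j=1,\dots,K$ and combining with $\prod_j H(y_j^{-1};\ccdot)$ from Proposition~\ref{prop2.B}(ii) produces the Cauchy identity stated in part~(ii).

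The only non-routine point is the passage in Step~3; it is a standard log-expansion manipulation, but one must be careful to verify the interchange of summations in the chosen topology on $\F$. Once this is in hand, the proposition follows immediately from the general machinery of Proposition~\ref{prop2.B}.
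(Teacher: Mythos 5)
Your proposal is correct and follows essentially the same route as the paper: verify conditions (1) and (2) of Proposition \ref{prop2.B} (obtaining $\wt c_j=q^{-j-1}$ and $r_k=1/(1-q^k)$), and then convert the exponential factor into $(y_j^{-1};q)_\infty$ by expanding $(1-q^k)^{-1}$ as a geometric series and recognizing the logarithm of the $q$-Pochhammer product. The only difference is that you spell out the interchange-of-summation step slightly more explicitly than the paper does.
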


\begin{proof}
(i) and (iii). By virtue of Proposition \ref{prop2.B}, it suffices to check conditions (1) and (2) in item (i) of that proposition. 

In our case $c^{(N)}_{N+j}=q^{-j-1}$. These quantities do not depend on $N$, so condition (1) holds for trivial reasons, and we have $\wt c_j=q^{-j-1}$ for all $j\in\Z$. 

We proceed to condition (2). We have
$$
(c^{(N)}_0)^k+\dots+(c^{(N)}_{N-1})^k=(q^{N-1})^k+(q^{N-2})^k+\dots+1=\frac{1-q^{kN}}{1-q^k}\to \frac1{1-q^k}. 
$$
Thus, condition (2) holds with $r_k=\frac1{1-q^k}$, $k=1,2,\dots$\,. Thus, we have proved the existence of the limit symmetric functions $I^A_\mu(\ccdot;q)$.

(ii) The existence of the desired Cauchy identity follows from item (ii) of Proposition \ref{prop2.B}. The sequence $
\wt c_{-K+1}, \wt c_{-K+2},\dots$ has the desired form $q^{K-2}, q^{K-3}, \dots$\,. It remains to compute explicitly the last product on the right-hand side of \eqref{eq2.B1}. It has the form
$$
\prod_{j=1}^K\exp\left\{-\sum_{k=1}^\infty \dfrac{r_k}{k y_j^k}\right\}=\prod_{j=1}^K\exp\left\{-\sum_{k=1}^\infty \dfrac{(1-q^k)^{-1}}{k y_j^k}\right\}.
$$
It is convenient to write $(1-q^k)^{-1}$ as the infinite series $1+q^k+q^{2k}+\dots$\,.
After that, it is readily seen that the expression in question equals $\prod_{j=1}^K(y_j^{-1};q)_\infty$, as desired. 
\end{proof}

\subsection{Example: symmetric functions $I^{BC}_\mu(\ccdot;q;s)$}\label{sec:IBC}

Here we again assume that $q$ is either a nonzero complex number with $|q|<1$, or a formal parameter. We also introduce one more parameter $s$; it can be a nonzero complex number or a formal parameter.

Next, we take
$$
c^{(N)}_i:=sq^i+ s^{-1}q^{N-i-1}, \qquad i=0,1,2,\dots\,.
$$
The corresponding multiparameter Schur polynomials will be denoted by $I^{BC}_{\mu\mid N}(\ccdot;q;s)$:
$$
I^{BC}_{\mu\mid N}(x_1,\dots,x_N;q;s):=s_{\mu\mid N}(x_1,\dots,x_N\mid s+s^{-1}q^{N-1}, s q+s^{-1}q^{N-2},\dots).
$$

\begin{proposition}\label{prop2.D}
{\rm(i)} There exist  limit  symmetric functions
\begin{equation*}
I^{BC}_\mu(\ccdot;q;s):=\lim_{N\to\infty}I^{BC}_{\mu\mid N}(\ccdot;q;s), \qquad \mu\in\Y.
\end{equation*}

{\rm(ii)} For every $K=1,2,\dots$ the following Cauchy identity holds
\begin{multline}\label{eq.cauchyBC}
\sum_{\mu\in\Y(K)}I^{BC}_\mu(\ccdot;q;s)\si_{\mu\mid K}(y_1,\dots,y_K\mid s^{-1}q^{K-2}, s^{-1}q^{K-3},\dots)\\
=\prod_{j=1}^K H(y_j^{-1};\ccdot)(sy_j^{-1};q)_\infty(s^{-1}y_j^{-1};q)_\infty.
\end{multline}

{\rm{(iii)}} $I^{BC}_{\mu}(\ccdot; q; s) = S_{\mu}(\ccdot)\, + \,$lower degree terms.
\end{proposition}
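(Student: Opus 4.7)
The proof is a direct application of Proposition \ref{prop2.B} to the specific choice $c^{(N)}_i=sq^i+s^{-1}q^{N-i-1}$, so the plan is to verify the two hypotheses (1) and (2) of that proposition, identify the resulting parameters $\wt c_j$ and $r_k$, and then match the right-hand side of \eqref{eq2.B1} with the one asserted in \eqref{eq.cauchyBC}. Part (iii) will then follow from Proposition \ref{prop2.B}(iv) with no extra work.

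First, I would check condition (1). For fixed $j\in\Z$ and $N\to\infty$, one has
$$
c^{(N)}_{N+j}=sq^{N+j}+s^{-1}q^{-j-1},
$$
and since $|q|<1$ (or, in the formal setting, $q^{N+j}\to 0$ in $\C((q))$) the first summand vanishes, so $\wt c_j=s^{-1}q^{-j-1}$. This immediately gives the shifted sequence
$\wt c_{-K+1},\wt c_{-K+2},\dots=s^{-1}q^{K-2},s^{-1}q^{K-3},\dots$
entering the $\si$-function in \eqref{eq.cauchyBC}.

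Second, I would verify condition (2). Expanding the binomial,
$$
\sum_{i=0}^{N-1}(c^{(N)}_i)^k=\sum_{\ell=0}^k\binom{k}{\ell}s^{2\ell-k}q^{(k-\ell)(N-1)}\sum_{i=0}^{N-1}q^{i(2\ell-k)}.
$$
I would analyze each $\ell$ separately. For $\ell=k$ the factor $q^{(k-\ell)(N-1)}$ is $1$ and the geometric sum tends to $(1-q^k)^{-1}$, giving the contribution $s^k/(1-q^k)$. For $\ell=0$ the inner sum equals $q^{-k(N-1)}(1-q^{kN})/(1-q^k)$, which cancels the factor $q^{k(N-1)}$ and produces $s^{-k}/(1-q^k)$. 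For every intermediate $0<\ell<k$, a short case-check (distinguishing $2\ell>k$, $2\ell<k$, and $2\ell=k$) shows that the product of $q^{(k-\ell)(N-1)}$ and the geometric sum carries a positive power of $q^{N-1}$ (possibly multiplied by $N$ when $2\ell=k$), so its limit is $0$. Consequently
$$
r_k=\frac{s^k+s^{-k}}{1-q^k},\qquad k\ge 1.
$$
This is the only computational step of real substance, and it is the likely spot to double-check, since one must make sure that none of the intermediate $\ell$ produces a divergent contribution.

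Finally, I would plug $r_k$ into the Cauchy identity \eqref{eq2.B1}. Taking logarithms and summing over $n\ge 0$,
$$
\log\bigl((sy^{-1};q)_\infty(s^{-1}y^{-1};q)_\infty\bigr)
=-\sum_{k=1}^\infty\frac{s^k+s^{-k}}{k(1-q^k)\,y^k}
=-\sum_{k=1}^\infty\frac{r_k}{k\,y^k},
$$
so the factor $\exp\{-\sum_k r_k/(ky_j^k)\}$ is precisely $(sy_j^{-1};q)_\infty(s^{-1}y_j^{-1};q)_\infty$. Together with the sequence of nodes found above, this yields the Cauchy identity \eqref{eq.cauchyBC} stated in (ii). Uniqueness of $I^{BC}_\mu$ follows from Proposition \ref{prop2.B}(iii), and (iii) follows from Proposition \ref{prop2.B}(iv) because each $I^{BC}_{\mu\mid N}$ has top homogeneous component $S_{\mu\mid N}$. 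The only genuinely delicate point is the vanishing of the interior binomial terms in step 2; everything else is a mechanical check.
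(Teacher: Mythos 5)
Your proof is correct and follows essentially the same route as the paper's: both verify conditions (1) and (2) of Proposition \ref{prop2.B} for $c^{(N)}_i=sq^i+s^{-1}q^{N-i-1}$, obtain $\wt c_j=s^{-1}q^{-j-1}$ and $r_k=(s^k+s^{-k})/(1-q^k)$ via the binomial expansion with only the $\ell=0$ and $\ell=k$ terms surviving, and then match the exponential factor with the two $q$-Pochhammer symbols. You merely spell out in more detail the vanishing of the intermediate terms and the logarithmic identity, both of which the paper leaves as routine checks.
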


\begin{proof}
We argue as in the previous proposition.

In the present context
$$
c^{(N)}_{N+j}=sq^{N+j}+s^{-1} q^{-j-1}\to s^{-1} q^{-j-1}.
$$
Thus, $\wt c_j=s^{-1} q^{-j-1}$ for $j\in\Z$. 

Next, we have
$$
(c^{(N)}_0)^k+\dots+(c^{(N)}_{N-1})^k=\sum_{i=0}^{N-1} (sq^i+ s^{-1}q^{N-i-1})^k. 
$$
Using the binomial formula, we write this as
$$
\sum_{\ell=0}^k\sum_{i=0}^{N-1} \binom{k}{\ell}(sq^i)^\ell(s^{-1}q^{N-i-1})^{k-\ell}.
$$
It is easily checked that if $0<\ell<k$, then the corresponding interior sum tends to $0$ as $N\to\infty$. The contribution from the remaining two values, $\ell=0$ and $\ell=k$, gives, in the limit, the value
$$
r_k = \frac{s^{-k}}{1-q^k} + \frac{s^k}{1-q^k}.
$$
Then the argument is completed as in the previous proposition.
\end{proof}

\subsection{Limit transition $I^{BC}_\mu(\ccdot;q;s)  \to I^A_\mu(\ccdot;q)$}
We keep to the assumptions of the previous subsection. Below, $X$ stands for the collection of arguments $(x_1,x_2,\dots)$ and $\eps\in\F$ is an additional nonzero parameter. 

\begin{proposition}\label{BCtoA}
The following limit relation holds
$$
\lim_{\eps\to 0} \eps^{|\mu|}I^{BC}_\mu(X\eps^{-1};q; \eps)=I^A_\mu(X;q).
$$
\end{proposition}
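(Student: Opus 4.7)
The plan is to compare the Cauchy identities for $I^{BC}_\mu(\ccdot;q;s)$ and for $I^A_\mu(\ccdot;q)$ (Propositions \ref{prop2.D}(ii) and \ref{prop2.C}(ii)) under the joint substitution $X\mapsto X\eps^{-1}$, $s=\eps$, $y_j\mapsto y_j/\eps$, and then to exploit the uniqueness of the $\si$-expansion. I start from \eqref{eq.cauchyBC} with $s=\eps$, evaluated at arguments $X\eps^{-1}$, and substitute $y_j\mapsto y_j/\eps$. Two scaling observations do the bulk of the work. First, pulling $\eps^{-(\mu_i+K-i)}$ out of the $i$th row of the numerator of \eqref{eq.sigma} and $\eps^{-(K-i)}$ out of the $i$th row of the denominator gives the scaling law
\[
\si_{\mu\mid K}(y_1/\eps,\dots,y_K/\eps\mid \eps^{-1}q^{K-2},\eps^{-1}q^{K-3},\dots) = \eps^{|\mu|}\,\si_{\mu\mid K}(y_1,\dots,y_K\mid q^{K-2}, q^{K-3},\dots).
\]
Second, by homogeneity of each $h_k$, one has $H(\eps y_j^{-1}; X\eps^{-1}) = H(y_j^{-1}; X)$. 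Combining these two facts, the transformed Cauchy identity becomes
\[
\sum_{\mu\in\Y(K)} \eps^{|\mu|}\,I^{BC}_\mu(X\eps^{-1};q;\eps)\,\si_{\mu\mid K}(y\mid q^{K-2},\dots) = \prod_{j=1}^K H(y_j^{-1};X)\,(y_j^{-1};q)_\infty\,(\eps^2 y_j^{-1};q)_\infty.
\]

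Now I let $\eps\to 0$. The extra factor $(\eps^2 y_j^{-1};q)_\infty$ tends to $1$ coefficient-wise in $y^{-1}$, so the right-hand side converges to $\prod_j H(y_j^{-1};X)(y_j^{-1};q)_\infty$, which by Proposition \ref{prop2.C}(ii) equals $\sum_\mu I^A_\mu(\ccdot;q)\,\si_{\mu\mid K}(y\mid q^{K-2},\dots)$. Since the $\si_{\mu\mid K}$ form a topological basis of $\F[[y_1^{-1},\dots,y_K^{-1}]]^{S_K}$ (as recalled in Section \ref{sect2}), the coefficients of $\si_{\mu\mid K}$ on the two sides of any such identity must agree; because each coefficient of a fixed monomial $y^{-\alpha}$ on either side involves only the finitely many $\mu$ with $|\mu|\le|\alpha|$ and takes values in the finite-dimensional subspace $\Sym_{\le|\alpha|}$, this matching yields coefficient-wise convergence $\eps^{|\mu|}\,I^{BC}_\mu(X\eps^{-1};q;\eps)\to I^A_\mu(X;q)$ in $\Sym$, for each fixed $\mu\in\Y$.

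The only delicate point is the termwise passage to the limit inside the infinite sum indexed by $\mu$; this is handled cleanly by the triangular, topological-basis structure of the $\si$-expansion, which allows one to recover each individual coefficient $\eps^{|\mu|}I^{BC}_\mu(X\eps^{-1};q;\eps)$ from finitely many $y^{-1}$-coefficients of the right-hand side, where the $\eps\to 0$ limit is immediate and takes place in a finite-dimensional $\F$-vector space. No further estimates are required.
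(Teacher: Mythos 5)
Your proposal is correct and follows essentially the same route as the paper's own proof: rewrite the Cauchy identity \eqref{eq.cauchyBC} using the scaling law for the $\si$-functions and the homogeneity of $H$, observe that the only $\eps$-dependence left on the right-hand side is the factor $(\eps^2 y_j^{-1};q)_\infty\to 1$, and conclude by uniqueness of the $\si$-expansion. Your explicit remark on why the termwise limit in the infinite sum over $\mu$ is legitimate (finitely many $\mu$ contribute to each $y^{-\alpha}$-coefficient, which lives in a finite-dimensional space) is a welcome elaboration of a point the paper leaves implicit.
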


\begin{proof}
Observe that
$$
\si_{\mu\mid K}(y_1,\dots,y_K\mid s^{-1}q^{K-2}, s^{-1}q^{K-3},\dots)=s^{|\mu|}\si_{\mu\mid K}(sy_1,\dots,sy_K\mid q^{K-2}, q^{K-3},\dots).
$$
Consequently, the identity \eqref{eq.cauchyBC} can be rewritten as
\begin{multline*}
\sum_{\mu\in\Y(K)}  s^{|\mu|} I^{BC}_\mu(\ccdot;q;s)\si_{\mu\mid K}(sy_1,\dots,sy_K\mid q^{K-2}, q^{K-3},\dots)\\
=\prod_{j=1}^K H(y_j^{-1};\ccdot)(sy_j^{-1};q)_\infty(s^{-1}y_j^{-1};q)_\infty.
\end{multline*}
It follows
\begin{multline*}
\sum_{\mu\in\Y(K)}  \eps^{|\mu|} I^{BC}_\mu(X\eps^{-1};q;\eps)\si_{\mu\mid K}(\eps y_1,\dots,\eps y_K\mid q^{K-2}, q^{K-3},\dots)\\
=\prod_{j=1}^K H(y_j^{-1};X\eps^{-1})(\eps y_j^{-1};q)_\infty(\eps^{-1}y_j^{-1};q)_\infty.
\end{multline*}
Next, replace $\eps y_j$ with $y_j$ and use the obvious relation $H(z; Xa)=H(za,X)$. This gives
\begin{multline*}
\sum_{\mu\in\Y(K)}  \eps^{|\mu|} I^{BC}_\mu(X\eps^{-1};q;\eps)\si_{\mu\mid K}(y_1,\dots,y_K\mid q^{K-2}, q^{K-3},\dots)\\
=\prod_{j=1}^K H(y_j^{-1};X)(\eps^2 y_j^{-1};q)_\infty(y_j^{-1};q)_\infty.
\end{multline*}
Because
$$
\lim_{\eps\to0}(\eps^2 y_j^{-1};q)_\infty=1,
$$
we are done. 
\end{proof}

\section{$q$-Racah symmetric functions}\label{sect3}

In this section and for the remainder of this paper we assume that $q \in (0, 1)$ unless otherwise stated.

\subsection{Univariate $q$-Racah polynomials}\label{preliminariesqR}

The \emph{Askey-Wilson} $q$-difference operator \cite[Section 5]{AW-1985} is defined by 
\begin{equation}\label{eq3.A}
D^{AW} := A(u)(T_q-1)+A(u^{-1})(T_{q^{-1}}-1),
\end{equation}
where $u$ is a complex variable; $T_q$ is the $q$-shift operator acting on a test function $f$ by  $T_qf(u)=f(uq)$; finally, 
$$
A(u):=\frac{(1-t_0 u)(1-t_1 u)(1-t_2 u)(1-t_3 u)}{(1-u^2)(1-u^2 q)},
$$
where $(t_0,t_1,t_2,t_3)$ is a quadruple of complex parameters. 

The operator $D^{AW}$ acts on the space $\C[u,u^{-1}]$ of Laurent polynomials and preserves the subspace of symmetric polynomials
$$
\C[u,u^{-1}]^{sym}:=\{f\in\C[u,u^{-1}]: f(u)=f(u^{-1})\}.
$$ 

On this subspace there exists an eigenbasis $\{P_n^{AW}: n=0,1,\dots\}$,  
\begin{equation}\label{eq3.A1}
D^{AW} P^{AW}_n=-q^{-n}(1-q^n)(t_0t_1t_2t_3 q^{n-1}-1)P^{AW}_n.
\end{equation}

There is an explicit expression for the polynomials $P^{AW}_n$:
\begin{equation}\label{eq3.B}
P_n^{AW}(u)=\const {}_4\phi_3\left[\left.\begin{matrix} q^{-n}, \; t_0t_1t_2t_3q^{n-1}, \; t_0u,\; t_0u^{-1} \\ t_0t_1,\; t_0t_2,\; t_0t_3 \end{matrix}\,\right|q\right],
\end{equation}
where we use standard terminology for the basic hypergeometric functions $_4\phi_3$, as in \cite{GR}.
The choice of the constant prefactor in \eqref{eq3.B} depends on the standardization of the polynomials.
Recall, also from \cite{GR}, the notation for $q$--Pochhammer symbols that will be used hereinafter:
$$(x; q)_{n} := \prod_{k=0}^{n-1}(1 - xq^k); \quad (x_1, \ldots, x_m; q)_n := \prod_{j=1}^m{(x_j; q)_n}; \quad \textrm{for }n\in\Z_{\geq 0}.$$
A natural choice of standardization constant in \eqref{eq3.B} is
\begin{equation}\label{eq3.B1}
\const=\frac{\prod_{k=1}^3(t_0t_k; q)_n}{t_0^n (t_0t_1t_2t_3q^{n-1}; q)_n},
\end{equation}
as it gives polynomials $P_n^{AW}(u)$ with highest degree coefficient (in the variable $u+u^{-1}$) equal to $1$.

In this standardization, the polynomials $P_n^{AW}$ are symmetric with respect to all permutations of the quadruple $(t_0,t_1,t_2,t_3)$: this is evident from \eqref{eq3.A1} but not immediately visible from \eqref{eq3.B}. The link between the $q$-difference equation \eqref{eq3.A1} and the hypergeometric representation \eqref{eq3.B} is not trivial: about it, see e.g. \cite[(5.7) -- (5.9)]{AW-1985}. 

As is seen from \eqref{eq3.B} and \eqref{eq3.B1}, the coefficients of $P^{AW}_n$ depend rationally on $(t_0,t_1,t_2,t_3)$; moreover, they do not have singularities provided that $(t_0t_1t_2t_3q^{n-1};q)_n\ne0$ and $t_0\ne0$ (one can prove that the latter condition is in fact redundant and therefore it can be dropped).  

The polynomials $P^{AW}_n$ give rise both to Askey-Wilson and to $q$-Racah orthogonal  polynomials, but the corresponding  weight measures are very different. The admissible ranges of the parameters $(t_0,t_1,t_2,t_3)$ are different too.

In the case of the Askey-Wilson polynomials, one usually takes as the argument the half-sum $\frac12(u+u^{-1})$, the weight measure is supported by the interval $[-1,1]$ and has continuous density. Here, the interval $[-1,1]$ arises as the image of the unit circle $|u|=1$. 

In the case of the $q$-Racah polynomials, which we study below, the weight measure consists of finitely many atoms and $u$ should be purely imaginary. To avoid the use of purely imaginary numbers, we change the notation by setting
$$
u=iv, \quad it_0=s_0, \quad it_1=s_1, \quad it_2=s_2, \quad it_3=s_3,
$$
so that purely imaginary values of $u$ are translated into real values of $v$. Let us rewrite the polynomials $P^{AW}_n$ in the new notation and take as the argument the quantity
\begin{equation}\label{changeargument}
x:=v-v^{-1}=\frac{u+u^{-1}}i. 
\end{equation}
The resulting polynomials will be denoted by $\varphi^{qR}_n(x;q; s_0,s_1,s_2,s_3)$ (or simply $\varphi^{qR}_n$, for short). The constant prefactor is chosen from the condition that the highest degree coefficient is set to be equal to $1$.  From \eqref{eq3.A}, we obtain
\begin{multline}\label{eqn:qRpoly}
\varphi_n^{qR} \left(x; q; s_0, s_1, s_2, s_3 \right) :=
\frac{\prod_{k=1}^3(-s_0s_k; q)_n}{s_0^n (s_0s_1s_2s_3q^{n-1}; q)_n}\\
\times
{}_4\phi_3\left[\left.\begin{matrix} q^{-n}, \; s_0s_1s_2s_3q^{n-1}, \; s_0v,\; -s_0v^{-1} \\ -s_0s_1,\; -s_0s_2,\; -s_0s_3 \end{matrix}\,\right|q\right],\ n = 0, 1, \dots,
\end{multline}
with the understanding that $x$ is related to $v$ via $x=v-v^{-1}$.  

Let us emphasize again that, although $s_0$ plays a distinguished role here, the polynomials are invariant under all permutations of $(s_0,s_1,s_2,s_3)$. 

Somewhat abusing terminology, we will call the polynomials $\varphi_n^{qR} \left(x; q; s_0, s_1, s_2, s_3 \right)$ the (univariate) \emph{$q$-Racah polynomials}.  

\begin{lemma}\label{lemma3.A}
Let us interpret $s_0,s_1,s_2,s_3$ as formal parameters and work over the ground field\/ $\F:=\C(s_0,s_1,s_2,s_3)$. Next, denote by $\E=\E_{q,s_0,s_1,s_2,s_3}$ the linear functional\/ $\F[x]\to\F$ defined by
$$
\E(\varphi^{qR}_n(\ccdot;q; s_0,s_1,s_2,s_3))=\de_{n, 0}, 
$$
where $n=0,1,2,\dots$ and $\de_{n, 0}$ is the Kronecker's delta.

For arbitrary $m,n=0,1,2,\dots$, we have
$$
\E(\varphi^{qR}_m\varphi^{qR}_n)=\de_{m, n}h_n^{qR},
$$
where
\begin{equation}\label{eqn:normqR}
h^{qR}_n=h_n^{qR}(q; s_0, s_1, s_2, s_3) =
\frac{(-1)^n (q; q)_n \prod_{0 \leq i < j \leq 3}(-s_i s_j; q)_n}{(s_0s_1s_2s_3q^{n-1}; q)_n(s_0s_1s_2s_3; q)_{2n}}.
\end{equation}
\end{lemma}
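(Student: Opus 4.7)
The plan is to prove both statements---formal orthogonality of $\{\varphi_n^{qR}\}_{n \geq 0}$ with respect to $\E$, and the explicit norm formula for $h_n^{qR}$---by reducing to the classical univariate $q$-Racah orthogonality and then extending the resulting identity by rational continuation in the parameters $(s_0, s_1, s_2, s_3)$.

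First I would check that $\E$ is well defined: each $\varphi_n^{qR}$ is a monic polynomial of degree $n$ in $x$ by \eqref{eqn:qRpoly}, so the family $\{\varphi_n^{qR}\}_{n\ge 0}$ is an $\F$-basis of $\F[x]$, and every polynomial has a unique expansion in this basis. Next I would observe that both sides of the claimed equality $\E(\varphi_m^{qR}\varphi_n^{qR}) = \de_{m,n}\, h_n^{qR}$ are rational in $(s_0, s_1, s_2, s_3)$: the right-hand side manifestly so, and the left-hand side because the coefficients of $\varphi_n^{qR}$ are rational in the parameters and the expansion of a polynomial in a monic triangular basis is a rational (Gauss-elimination) procedure. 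Thus it suffices to verify the equality on a Zariski dense subset of $\C^4$.

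For each integer $K \geq 1$, I would specialize to real parameters $(s_0, s_1, s_2, s_3)$ with $s_2 s_3 = -q^{-K}$, chosen in the range for which the classical $q$-Racah weight is positive on an explicit $(K+1)$-point subset of $\R$; this corresponds, via the substitutions $u = iv$, $t_i = -i s_i$, $x = v - v^{-1}$, to the standard positivity conditions on the Askey--Wilson parameters. The textbook orthogonality \cite[Chapter 14]{KLS} then yields a positive atomic measure $\mu$ on these points with $\int \varphi_m^{qR}\varphi_n^{qR}\, d\mu = \de_{m,n}\, h_n^{qR}$ for $0 \le m, n \le K$, with $h_n^{qR}$ matching \eqref{eqn:normqR} after translating normalizations. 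Rescaling $\mu$ to have total mass $1$, integration against $\mu$ agrees with $\E$ on polynomials of degree $\le K$, since the two functionals coincide on the basis $\{\varphi_0^{qR}, \dots, \varphi_K^{qR}\}$ of that subspace. For any fixed $m, n$, choosing $K \ge m + n$ verifies the identity at these parameter values. Taking $K \to \infty$ produces a Zariski dense locus in $\C^4$ (any polynomial vanishing on the union would be divisible by infinitely many distinct irreducible factors $s_2 s_3 + q^{-K}$, hence would vanish), so rational continuation extends the identity to all formal parameter values.

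The main obstacle is the normalization bookkeeping at the classical orthogonality step: the standard $q$-Racah norm formula in \cite{KLS} is stated for Askey--Wilson polynomials in their textbook standardization, with argument $\tfrac12(u + u^{-1})$ and parameters $(t_0, t_1, t_2, t_3)$, so translating it through the substitutions $u = iv$, $t_i = -i s_i$, the change of argument $x = v - v^{-1}$, and the rescaling to the monic form in $x$ requires a careful but mechanical tracking of constants to arrive at precisely the formula \eqref{eqn:normqR}.
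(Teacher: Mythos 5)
Your proposal is correct, and its overall strategy -- reduce to a known classical orthogonality and then extend by rational continuation in $(s_0,s_1,s_2,s_3)$ -- is the same as the paper's. The difference is in which classical orthogonality you invoke. The paper specializes the parameters $(t_0,t_1,t_2,t_3)$ to an open set where the Askey--Wilson polynomials $P^{AW}_n$ are orthogonal with respect to the \emph{continuous} Askey--Wilson weight on $[-1,1]$; this gives $\wt\E(P^{AW}_mP^{AW}_n)=\de_{m,n}\wt h_n$ for \emph{all} $m,n$ simultaneously on a Zariski dense (open) set, so rational continuation is immediate, and the only remaining work is the substitution $t_k=-is_k$, $x=(u+u^{-1})/i$ and the monic renormalization. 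You instead use the \emph{finite discrete} $q$-Racah orthogonality on the locus $s_2s_3=-q^{-K}$, which only orthogonalizes the first $K+1$ polynomials at each specialization; this forces the extra steps of choosing $K\ge m+n$ and arguing that the union over $K$ of (the admissible real parts of) these hypersurfaces is Zariski dense. Both routes work; the paper's is shorter at this point because the dense set of valid parameters is open rather than a countable union of hypersurfaces, while yours essentially re-derives the content of the paper's later Proposition \ref{prop:positivity} and Corollary \ref{cor3.A} and feeds it back into the lemma. One small point to tighten in your density argument: for fixed $m,n$ and $K$ you only verify the identity on the real admissible subset of the hypersurface $s_2s_3=-q^{-K}$, so you should note explicitly that this subset (a product of nonempty real open sets in the remaining coordinates) is Zariski dense in the whole complex hypersurface before concluding that the rational identity holds on all of it; after that, your divisibility argument for the union over $K$ goes through.
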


\begin{proof}
As mentioned above, under suitable constraints on the parameters $(t_0,t_1,t_2,t_3)$, the polynomials $P^{AW}_n$ (viewed as functions of the variable $\frac12(u+u^{-1})$) are orthogonal with respect to a continuous weight (the Askey-Wilson weight). Consequently, they satisfy a similar orthogonality relation:
$$
\wt\E(P^{AW}_mP^{AW}_n)=\de_{m, n}\wt h_n,
$$
where $\wt\E$ is the corresponding moment functional, normalized by $\wt\E(1)=1$. From the known expression for the squared norms of the polynomials $P^{AW}_n$ (see \cite{AW-1979}, \cite[Section 3.1]{KS}, \cite[Section 14]{KLS}) we obtain
$$
\wt h_n = \frac{(q; q)_n \prod_{0 \leq i < j \leq 3}(t_i t_j; q)_n}{(t_0 t_1 t_2 t_3q^{n-1}; q)_n (t_0t_1t_2t_3; q)_{2n}}.
$$
Since the coefficients of the polynomials are rational functions of the parameters, the same formal orthogonality relations hold over the ground field $\F$. Then we can rewrite the above formula, taking into account the relationship
$$\varphi^{qR}_n(x) = \varphi^{qR}_n\left( \frac{u + u^{-1}}{i} \right) =
\left.\frac{P_n^{AW}(u)}{i^n}\right|_{t_k = -is_k, \ k=0, 1, 2, 3}$$
that follows from $(\ref{eq3.B})$, $(\ref{eq3.B1})$, $(\ref{changeargument})$ and $(\ref{eqn:qRpoly})$.
\end{proof}

\subsection{Orthogonality on a finite grid}

\begin{lemma}\label{positivecoeffs}
Let us split the quadruple $\{s_0,s_1,s_2,s_3\}$ into a disjoint union of two pairs $\{s_a, s_b\}\sqcup\{s_c, s_d\}$, $\{a, b, c, d\} = \{0, 1, 2, 3\}$. Next, suppose that inside each pair, the parameters are either real or complex-conjugate. Then the polynomials $\varphi_n^{qR}\left(x; q; s_0, s_1, s_2, s_3 \right)$ have real coefficients.
\end{lemma}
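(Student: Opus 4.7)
The plan is to combine two properties of the coefficients of $\varphi_n^{qR}(x; q; s_0, s_1, s_2, s_3)$ when it is expanded as a polynomial $\sum_{k=0}^n a_k(s_0, s_1, s_2, s_3)\, x^k$ in $x$. First, each $a_k$ is a rational function in $(s_0, s_1, s_2, s_3)$ whose coefficients lie in $\mathbb{Q}(q)\subset\mathbb{R}$; second, each $a_k$ is invariant under all permutations of $(s_0, s_1, s_2, s_3)$. Granting these two points, the pairing hypothesis forces complex conjugation to induce a permutation of the multiset $\{s_0, s_1, s_2, s_3\}$ (in each pair, either both entries are fixed, in the real case, or they are swapped, in the complex-conjugate case), so
\[\overline{a_k(s_0, s_1, s_2, s_3)} = a_k(\bar s_0, \bar s_1, \bar s_2, \bar s_3) = a_k(s_0, s_1, s_2, s_3),\]
and therefore each $a_k$ is real.

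For the rationality claim I would work directly from the hypergeometric representation \eqref{eqn:qRpoly}. The only $v$-dependence inside the ${}_4\phi_3$ sits in the paired Pochhammer factors, which collapse to a polynomial in $x$ via
\[(s_0v; q)_k\,(-s_0v^{-1}; q)_k = \prod_{j=0}^{k-1}(1 - s_0 q^j v)(1 + s_0 q^j v^{-1}) = \prod_{j=0}^{k-1}(1 - s_0 q^j x - s_0^2 q^{2j}),\]
using $v - v^{-1} = x$. Each summand of the ${}_4\phi_3$ thus becomes a polynomial in $x$ whose coefficients are polynomials in $(s_0, s_1, s_2, s_3)$ over $\mathbb{Q}(q)$, and the same is true of the prefactor of \eqref{eqn:qRpoly}: its numerator is a polynomial in $(s_0, s_1, s_2, s_3)$ with coefficients in $\mathbb{Z}[q]$, and its denominator $s_0^n\,(s_0s_1s_2s_3 q^{n-1}; q)_n$ is likewise a polynomial with $\mathbb{Z}[q]$-coefficients. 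Hence each $a_k$ lies in $\mathbb{Q}(q)(s_0, s_1, s_2, s_3)$, so $\overline{a_k(s_0, s_1, s_2, s_3)} = a_k(\bar s_0, \bar s_1, \bar s_2, \bar s_3)$ whenever the formula is defined.

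The symmetry claim is already recorded immediately after \eqref{eq3.B1}: it is transparent from the $S_4$-symmetric eigenvalues in \eqref{eq3.A1} together with the uniqueness of the eigenbasis, and it transfers from the Askey--Wilson polynomials to $\varphi_n^{qR}$ via the substitution $t_k = -i s_k$. With both ingredients in place, the pairing hypothesis closes the argument exactly as in the first paragraph.

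The only mild obstacle I foresee is bookkeeping around the apparent singularity $s_0 = 0$ in \eqref{eqn:qRpoly}: the authors note that this condition is removable, meaning that after clearing, $\varphi_n^{qR}$ is a genuine polynomial in $x$ whose coefficients are rational functions in $(s_0, s_1, s_2, s_3)$ with denominator dividing $(s_0 s_1 s_2 s_3 q^{n-1}; q)_n$. Since this denominator is itself a polynomial in the product $s_0 s_1 s_2 s_3$, which is real under the pairing hypothesis, no issues arise and the rationality conclusion is valid wherever $\varphi_n^{qR}$ is defined.
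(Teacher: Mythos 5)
Your proof is correct, but it follows a genuinely different route from the paper's. The paper argues via the difference operator: under the pairing hypothesis the Askey--Wilson operator $D^{AW}$, viewed as an operator on $\C[x]$, preserves $\R[x]$ (this is read off from the coefficients $A(u)$), and since for generic parameters the eigenvalues are pairwise distinct, the $\varphi^{qR}_n$ are the unique monic polynomial eigenfunctions and hence real; genericity is then removed by rational dependence on the parameters. You instead combine (a) the rationality of the coefficients over $\Q(q)$, extracted directly from the ${}_4\phi_3$ representation \eqref{eqn:qRpoly} via the identity $(s_0v;q)_k(-s_0v^{-1};q)_k=\prod_{j}(1-s_0q^jx-s_0^2q^{2j})$, with (b) the $S_4$-symmetry in $(s_0,s_1,s_2,s_3)$, noting that the pairing hypothesis makes complex conjugation act as a permutation of the parameter quadruple. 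Both ingredients are legitimately available (the symmetry is recorded in the paper right after \eqref{eq3.B1}), and the conjugation--permutation argument is clean. Two small caveats: your phrase that each summand of the ${}_4\phi_3$ has coefficients that are \emph{polynomials} in the parameters is slightly off, since the summands carry denominators $(-s_0s_1,-s_0s_2,-s_0s_3;q)_k$; these are cancelled by the prefactor, and in any case only membership in $\Q(q)(s_0,s_1,s_2,s_3)$ is needed, so nothing breaks. Also, your argument is not fully independent of the paper's: the $S_4$-symmetry you invoke is itself justified in the paper by the same uniqueness-of-eigenfunctions mechanism (``evident from \eqref{eq3.A1}''), so the operator argument is still doing the work one level down --- though the symmetry is also a classical fact about Askey--Wilson polynomials provable by ${}_4\phi_3$ transformation formulas, which would make your route genuinely self-contained.
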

\begin{proof}
The Askey--Wilson operator $D^{AW}$ can be interpreted as an operator on $\C[x]$.  Under the above assumptions, it preserves the subspace $\R[x]$, as is readily seen from the explicit form of the coefficients $A(u)$. Assume additionally that the parameters are in general position. Then the eigenvalues of $D^{AW}$ are pairwise distinct, so that the polynomials $\varphi_n^{qR}$ can be uniquely characterized as monic polynomial eigenfunctions of $D^{AW}$. This implies that their coefficients are real. Since the coefficients depend rationally on the parameters, the additional constraint can be removed.
\end{proof}

\begin{lemma}\label{lemma3.B}
Fix  $\zeta>0$ and let $m$ range over $\Z$. Next, set 
$$
y_m:=\z q^m-\z^{-1} q^{-m}.
$$
In this notation, the correspondence 
$\Z\ni m\mapsto y_m$
is strictly decreasing, so that
\begin{equation}\label{eq3.C}
\dots<y_2<y_1<y_0<y_{-1}<y_{-2}<\dots\,.
\end{equation}
\end{lemma}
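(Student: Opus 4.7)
The plan is to show directly that $y_m - y_{m+1} > 0$ for every $m \in \Z$, which immediately gives the chain of strict inequalities \eqref{eq3.C}. I would compute
$$
y_m - y_{m+1} = (\z q^m - \z^{-1}q^{-m}) - (\z q^{m+1} - \z^{-1} q^{-m-1}) = \z q^m(1-q) + \z^{-1}q^{-m-1}(1-q),
$$
factor out $(1-q)$, and then observe that, under the hypotheses $0<q<1$ and $\z>0$, each of the two summands $\z q^m$ and $\z^{-1}q^{-m-1}$ is strictly positive, and $(1-q)>0$.

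Hence $y_m - y_{m+1} > 0$ for all $m \in \Z$, which is exactly the statement that the sequence is strictly decreasing. There is no real obstacle here; the only thing to be careful about is that the argument works uniformly for all integers $m$ (positive, negative, and zero), which is immediate from the form of the factorization since neither factor changes sign as $m$ varies over $\Z$.
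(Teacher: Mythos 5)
Your proof is correct: the identity
$$
y_m - y_{m+1} = (1-q)\left(\zeta q^m + \zeta^{-1}q^{-m-1}\right)
$$
checks out, and both factors are manifestly positive for every $m\in\Z$ under the hypotheses $0<q<1$, $\zeta>0$. The paper argues differently: it first observes that $y_m>0$ for $m\ll0$ and $y_m<0$ for $m\gg0$, and then treats the two regimes separately --- for $y_m\ge0$ it compares $\zeta q^{m-1}>\zeta q^m\ge\zeta^{-1}q^{-m}>\zeta^{-1}q^{-m+1}$ to get $y_{m-1}>y_m$, and symmetrically for $y_m\le0$. That case split exploits the structure of the grid as two interleaved geometric progressions (which reappears later in the description of $\wt\De_{L,R}$ and $\De_{L,R}$), but it is not logically necessary. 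Your single uniform computation is shorter and arguably cleaner, since the difference $y_m-y_{m+1}$ factors globally with no sign issues; the only thing it forgoes is the incidental observation, made in the paper's proof, about where the sequence changes sign.
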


\begin{proof}
Obviously, $y_m>0$ for $m\ll 0$ and $y_m<0$ for $m\gg0$. Next, there is at most one index $m$ for which  $y_m=0$: this happens if $\z\in q^\Z:=\{q^m: m\in\Z\}$.  

Let $m$ be any index with  $y_m\ge 0$, that is,  $\z q^m\ge\z^{-1} q^{-m}$. Then 
$$
\z q^{m-1}>\z q^m\ge\z^{-1} q^{-m}> \z^{-1} q^{-m+1},
$$
whence $y_{m-1}>y_m$.

Now let $m$ be such that $y_m\le0$, that is $\z q^m\le \z^{-1} q^{-m}$. Then 
$$
\z q^{m+1}<\z q^m\le\z^{-1} q^{-m}< \z^{-1} q^{-m-1},
$$
whence $y_{m+1}<y_m$.

This implies the claim of the lemma.
\end{proof}

In the next lemma we rewrite the $q$-difference operator \eqref{eq3.A} in another form. It is convenient to multiply the operator by $-1$.

\begin{lemma}\label{lem:operatorqR}
The $q$-Racah polynomials, viewed as functions on the grid \eqref{eq3.C}, are eigenfunctions of a difference operator $D^{qR}$,
$$
D^{qR}\varphi_n^{qR}=q^{-n}(1-q^n)(s_0s_1s_2s_3 q^{n-1}-1)\varphi_n^{qR}.
$$
This operator is given by
\begin{equation}\label{eq3.DqR}
D^{qR}f(y_m)=\be_m [f(y_{m+1})-f(y_m)]+ \de_m [f(y_{m-1})-f(y_m)],
\end{equation}
where
\begin{gather*}
\be_m:= -\frac{(1-s_0 \z q^m)(1-s_1 \z q^m)(1-s_2 \z q^m)(1-s_3 \z q^m)}{(1+\z^2 q^{2m})(1+\z^2 q^{2m+1})},\\ 
\de_m:= -\frac{(1+s_0 \z^{-1} q^{-m})(1+s_1 \z^{-1} q^{-m})(1+s_2 \z^{-1} q^{-m})(1+s_3 \z^{-1} q^{-m})}{(1+\z^{-2} q^{-2m})(1+\z^{-2} q^{-2m+1})}.
\end{gather*}
\end{lemma}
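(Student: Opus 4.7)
The plan is to derive both the form \eqref{eq3.DqR} of $D^{qR}$ and the stated eigenvalue by transporting the Askey--Wilson eigenvalue equation \eqref{eq3.A1} through the variable changes $u = iv$, $t_k = -is_k$ introduced in Section \ref{preliminariesqR}, and by then restricting the resulting $q$-difference operator to the grid via $v = \z q^m$. Under this parametrization, $y_m = \z q^m - \z^{-1}q^{-m} = v - v^{-1}$, so $T_q$ (which sends $u\mapsto uq$, hence $v\mapsto vq$) becomes the shift $m\mapsto m+1$, and $T_{q^{-1}}$ becomes $m\mapsto m-1$. Consequently, $D^{AW}$ restricts to a three-term difference operator on functions of $m$, and multiplying by $-1$ converts it to an operator of the form \eqref{eq3.DqR}.

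The coefficients are then a direct computation. Using $u=iv$, $u^{-1}=-iv^{-1}$ and $t_k=-is_k$, each factor transforms as $1-t_k u = 1 - s_k v$, $1-t_k u^{-1} = 1 + s_k v^{-1}$, $1-u^2 = 1+v^2$, $1-u^2 q = 1+v^2 q$, and analogously for $u^{-2}$. Substituting $v=\z q^m$ and $v^{-1} = \z^{-1} q^{-m}$ into $A(u)$ and $A(u^{-1})$, and multiplying by $-1$, yields exactly the expressions for $\be_m$ and $\de_m$ stated in the lemma. This identifies $D^{qR}$ with $-D^{AW}$ after the change of variables and restriction to the grid.

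For the eigenvalue, note that $t_0 t_1 t_2 t_3 = (-i)^4\, s_0 s_1 s_2 s_3 = s_0 s_1 s_2 s_3$, so negating the eigenvalue in \eqref{eq3.A1} gives precisely $q^{-n}(1-q^n)(s_0 s_1 s_2 s_3 q^{n-1}-1)$. Since, by construction (cf.\ the proof of Lemma \ref{lemma3.A}), $\varphi_n^{qR}$ is a nonzero scalar multiple of $P_n^{AW}$ after the substitutions, it remains an eigenfunction with this eigenvalue. The only real technical difficulty is careful bookkeeping of the factors of $i$ and the overall sign through the substitutions; there is no genuine conceptual obstacle.
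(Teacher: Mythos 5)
Your proposal is correct and is exactly the computation the paper has in mind: its proof of this lemma is the single line ``Follows directly from \eqref{eq3.A}'', and your substitutions $u=iv$, $t_k=-is_k$, $v=\zeta q^m$ (giving $\be_m=-A(u)$, $\de_m=-A(u^{-1})$, and the negated eigenvalue with $t_0t_1t_2t_3=s_0s_1s_2s_3$) spell out precisely that verification.
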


\begin{proof}
Follows directly from \eqref{eq3.A}.
\end{proof}

We are going to impose sufficient constraints on $(s_0,s_1,s_2,s_3)$ under which the first $(K+1)$ polynomials $\varphi_0^{qR}, \ldots, \varphi_K^{qR}$ become orthogonal on a given $(K+1)$-point interval of the grid $\{y_m: m\in\Z\}$, $K=0,1,2,\dots$\,.
Any such interval is determined by two integers $L\ge R$ with $L-R=K$ and has the form
\begin{equation}\label{eq3.D}
\widetilde{\De}_{L,R}:=\{y_L, y_{L-1},\dots, y_{R+1}, y_R\}
\end{equation}
(thus, the indices $L$ and $R$ correspond to the left and right ends of the interval).

\begin{definition}\label{positivity}
We say that a quadruple $(s_0, s_1, s_2, s_3)$ is \textit{admissible} if $s_2 = -\zeta q^R$, $s_3 = \zeta^{-1} q^{-L}$, for some integers $L \geq R$, and the pair $(s_0, s_1)$ satisfies one of the following conditions:
\begin{itemize}
	\item $s_0 = \overline{s_1} \in \C \setminus \R$;
	\item $\zeta^{-1}q^{m+1} < s_0, s_1 < \zeta^{-1}q^{m}$, for some $m\in\Z$;
	\item $-\zeta q^{m} < s_0, s_1 < -\zeta q^{m+1}$, for some $m\in\Z$.
\end{itemize}
We also set $K:=L-R$.
\end{definition}

For admissible quadruples of parameters the following claims hold true:

1. The $q$-Racah polynomials are well defined. Indeed,  their coefficients do not have singularities --- this follows from  \eqref{eqn:qRpoly} and the fact that $s_0s_1s_2s_3<0$.

2. As a consequence, the conclusion of Lemma \ref{lemma3.A} holds if $(s_0,s_1,s_2,s_3)$ is an admissible quadruple of numerical parameters, as opposed to formal variables.

3. The coefficients of the $q$-Racah polynomials are real. Indeed, this is an immediate corollary of Lemma \ref{positivecoeffs}.

4.  Because $s_0s_1s_2s_3<0$, the eigenvalues $q^{-n}(1-q^n)(s_0s_1s_2s_3q^{n-1}-1)$, $n=0,1,2,\dots$, strictly decrease and hence are pairwise distinct. 

\begin{proposition}\label{prop:positivity}
Fix an arbitrary finite interval $\widetilde{\De}_{L,R}$ of the grid $\{y_m: m\in\Z\}$ as defined above.
Let $(s_0, s_1, s_2, s_3)$ be admissible with $s_2 = -\zeta q^R$, $s_3 = \zeta^{-1}q^{-L}$.

Then there exists a unique probability measure $w^{qR} = w^{qR}(\ccdot; q; \zeta; s_0, s_1, s_2, s_3)$ on the grid $\{y_m: m\in\Z\}$, supported by $\widetilde{\De}_{L,R}$, and such that for any polynomial $f$,  
$$
\E(f)=\sum_{m\in\Z}f(y_m)w^{qR}(y_m; q; \zeta; s_0, s_1, s_2, s_3)=\sum_{y\in\wt\De_{L,R}}f(y)w^{qR}(y; q; \zeta; s_0, s_1, s_2, s_3),
$$
where $\E$ is the formal moment functional defined in Lemma \ref{lemma3.A}. 
\end{proposition}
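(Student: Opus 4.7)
The plan is to construct $w^{qR}$ directly from the birth-death structure of the operator $D^{qR}$ from Lemma \ref{lem:operatorqR}. Under $s_2 = -\zeta q^R$ and $s_3 = \zeta^{-1}q^{-L}$, the factor $(1-s_3\zeta q^L)$ in $\beta_L$ and the factor $(1+s_2\zeta^{-1}q^{-R})$ in $\delta_R$ both vanish, so $D^{qR}$ preserves the $(K+1)$-dimensional space of functions supported on $\widetilde{\De}_{L,R}$. I would set $w(y_R):=1$ and define recursively $w(y_{m+1}):=w(y_m)\beta_m/\delta_{m+1}$ for $R\le m\le L-1$; this detailed-balance relation automatically makes $D^{qR}$ symmetric with respect to $w$. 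Normalizing by the total mass produces the candidate probability measure $w^{qR}$.

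The technical heart of the proof is verifying that $\beta_m/\delta_{m+1}>0$ for every $R\le m\le L-1$. The factors depending only on $s_2,s_3,\zeta$ are controlled uniformly: $(1-q^{m-L})$ in $\beta_m$ and $(1-q^{R-m-1})$ in $\delta_{m+1}$ are both strictly negative on the relevant range (so their signs cancel), while the remaining terms involving $\zeta^{\pm 2}$ and the full denominators are manifestly positive. The substantive check concerns the two linear products $(1-s_0\zeta q^m)(1-s_1\zeta q^m)$ and $(1+s_0\zeta^{-1}q^{-m-1})(1+s_1\zeta^{-1}q^{-m-1})$. In the complex-conjugate regime each product is a squared absolute value, hence positive. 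In either real regime, each linear factor changes sign only when $s_i\zeta$ (or $-s_i\zeta^{-1}$) crosses an integer power of $q$; since admissibility places $s_0,s_1$ in a single gap of the appropriate $q$-ladder, the two factors share a sign, and the product is positive. This case analysis is the main obstacle, and the admissibility conditions in Definition \ref{positivity} were engineered to make it succeed.

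Once $w^{qR}>0$ is established, $D^{qR}$ becomes self-adjoint on $L^2(\widetilde{\De}_{L,R},w^{qR})$, hence diagonalizable. The polynomials $\varphi_n^{qR}$ are eigenfunctions of $D^{qR}$ with pairwise distinct eigenvalues $\lambda_n=q^{-n}(1-q^n)(s_0s_1s_2s_3q^{n-1}-1)$ (distinctness follows from $s_0s_1s_2s_3<0$, as noted after Definition \ref{positivity}). Hence the restrictions $\varphi_0^{qR}|_{\widetilde{\De}_{L,R}},\dots,\varphi_K^{qR}|_{\widetilde{\De}_{L,R}}$ form an orthogonal basis of the $(K+1)$-dimensional space $L^2(\widetilde{\De}_{L,R},w^{qR})$, while for $n>K$ the eigenvalue $\lambda_n$ lies outside the spectrum of the restricted operator, so $\varphi_n^{qR}|_{\widetilde{\De}_{L,R}}\equiv 0$. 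Consequently $\sum_{m=R}^L w^{qR}(y_m)\varphi_n^{qR}(y_m)=\delta_{n,0}$ for every $n\ge 0$ (by normalization for $n=0$, by orthogonality to $\varphi_0^{qR}=1$ for $1\le n\le K$, and by the vanishing just noted for $n>K$), and since $\{\varphi_n^{qR}\}$ is a basis of $\F[x]$ this identity extends to all polynomials and identifies $w^{qR}$ with the formal moment functional $\E$ of Lemma \ref{lemma3.A}. Uniqueness is immediate, since any probability measure supported on the $K+1$ distinct nodes of $\widetilde{\De}_{L,R}$ and matching $\E$ on $\varphi_0^{qR},\dots,\varphi_K^{qR}$ is determined by a linear system whose coefficient matrix $[\varphi_n^{qR}(y_m)]_{n,m=0}^K$ is invertible.
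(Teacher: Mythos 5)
Your proposal is correct and follows essentially the same route as the paper: define the weights by the detailed-balance recursion $w_{m+1}=w_m\beta_m/\delta_{m+1}$, verify strict positivity of $\beta_R,\dots,\beta_{L-1}$ and $\delta_{R+1},\dots,\delta_L$ via the case analysis on the admissibility conditions (including the sign cancellation between the $s_3$-factor in $\beta_m$ and the $s_2$-factor in $\delta_{m+1}$), use $\beta_L=\delta_R=0$ to truncate to $\widetilde{\De}_{L,R}$, and then exploit self-adjointness of $D^{qR}$ together with the distinctness of its eigenvalues to get orthogonality and the identification with $\E$. The only extra ingredient you supply (the vanishing of $\varphi_n^{qR}$ on the grid for $n>K$) is not needed here — orthogonality of $\varphi_n^{qR}$ to $\varphi_0^{qR}=1$ for all $n\ge 1$ already follows from the distinct-eigenvalue argument — but it is true and harmless.
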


\begin{proof}
Let us prove the existence claim, the uniqueness being obvious.
Recall that $\beta_m, \delta_m$, $m\in\Z$, are the coefficients of the difference operator $D^{qR}$; see Lemma $\ref{lem:operatorqR}$.
With $s_2=-\zeta q^R$ and $s_3=\zeta^{-1} q^{-L}$, we have $\de_R=0$ and  $\be_L=0$.
We claim that $\be_R, \be_{R+1}, \ldots, \be_{L-1}$ and $\de_{R+1}, \ldots, \de_{L-1}, \de_L$ are all strictly positive.
As the proof is similar for both sets of $K$ numbers, we focus on the $\be$'s and leave the $\de$'s to the reader.
The denominator of $\beta_m$ is always strictly positive and so is the factor $(1 - s_2\zeta q^m)$.
Since $1 - s_3\zeta q^m = 1 - q^{m-L}$ and $0 < q < 1$, the factor $(1 - s_3\zeta q^m)$ is strictly negative for $m = R, R+1, \ldots, L-1$.
Finally, if $s_0, s_1$ satisfy any of the three conditions in Definition $\ref{positivity}$, then $(1 - s_0\zeta q^m)(1 - s_1\zeta q^m)$ is always strictly positive.
Because of the minus sign in front,  it follows that $\beta_m > 0$ for $m = R, R+1, \ldots, L-1$.

There exist positive numbers $w_{R}, w_{R+1}, \ldots, w_{L-1}, w_{L}$ satisfying the conditions:

\smallskip

$\bullet$ $w_m\be_m=w_{m+1}\de_{m+1}$, $R\le m\le L-1$ (the balance relation);

$\bullet$ $\sum_{m=R}^L w_m=1$ (normalization).

\smallskip

Indeed, this holds with
$$
w_m=\const\prod_{i=R}^{m-1}\be_i\cdot\prod_{j=R+1}^m \de_j^{-1}, \qquad R\le m\le L,
$$
and an appropriate normalization constant.
Next,  set $w_m=0$ for all $m\in\Z\setminus[R,L]$ and observe that with this definition, the balance relation holds for any $m\in\Z$, because $\de_R=\be_L=0$. From this and the very definition of $D^{qR}$, see \eqref{eq3.DqR}, it follows that for arbitrary functions $f$  and $g$ on the grid $\{y_m: m\in\Z\}$, one has the symmetry relation
$$
\sum_{m\in\Z}(D^{qR}f)(y_m)g(y_m)w_m=\sum_{m\in\Z}f(y_m)(D^{qR}g)(y_m)w_m.
$$
Substituting $f=\varphi^{qR}_k$, $g=\varphi^{qR}_n$ with $k\ne n$, and using the fact that the eigenvalues of $D^{qR}$ are pairwise distinct, it follows that the polynomials $\varphi^{qR}_k$ and $\varphi^{qR}_n$ are orthogonal with respect to the weight function given by the $w_m$'s.
Then the probability measure with the weights
$$
w^{qR}(y_m;q;\zeta;s_0,s_1,s_2,s_3):=w_m, \qquad m\in\Z, 
$$
has the desired properties.
\end{proof}

\begin{remark}
One can derive an explicit formula for $w^{qR}(y_m; q; \zeta; s_0, s_1, s_2, s_3)$:
\begin{gather*}
w^{qR}(y_m; q; \zeta; s_0, s_1, s_2, s_3) =
\frac{\prod_{k = 0}^3(-s_k^{-1}\zeta q, s_k^{-1}\zeta^{-1}q; q)_{\infty}}
{\prod_{0 \leq i < j \leq 3}(-s_i^{-1}s_j^{-1}q; q)_{\infty}} \cdot
\frac{(q/(s_0s_1s_2s_3); q)_{\infty}}{(q, -\zeta^2 q, -\zeta^{-2}q; q)_{\infty}}\\
\times
\frac{(s_0\zeta, s_1\zeta, s_2\zeta, s_3\zeta; q)_m}
{(-q\zeta/s_0, -q\zeta/s_1, -q\zeta/s_2, -q\zeta/s_3; q)_m}
\frac{1+\zeta^2q^{2m}}{1+\zeta^2}
\left( \frac{q}{s_0s_1s_2s_3} \right)^m.
\end{gather*}
Note that the right-hand side vanishes unless $R\le m\le L$: this is due to the factor $(s_3\zeta;q)_m$ in the numerator and the factor $(-\zeta/s_2;q)_m$ in the denominator.
The fact that the weights above add up to $1$ is exactly Bayley's formula for a very well-poised $_6\psi_6$ hypergeometric series, see \cite{Ba} and \cite[Section 5]{GR}.
\end{remark}

\begin{corollary}\label{cor3.A}
We keep to the assumptions and notation of Proposition \ref{prop:positivity}. Recall that $K=L-R$ and note that the grid $\wt\De_{L,R}$ consists of $K+1$ points. Let $\ell^2(\wt\De_{L,R}, w^{qR})$ denote the $(K+1)$-dimensional real space of functions on $\wt\De_{L,R}$ with the inner product
$$
(f,g):=\sum_{y\in\wt\De_{L,R}}f(y)g(y)w^{qR}(y;q;\zeta;s_0,s_1,s_2,s_3).
$$

{\rm(i)} The first $K+1$ polynomials $\varphi^{qR}_n$, $0\le n\le K$, form an orthogonal basis in $\ell^2(\wt\De_{L,R}, w^{qR})$.

{\rm(ii)} Their squared norms are the quantities $h_n^{qR}$, $0\le n\le K$,  defined in \eqref{eqn:normqR}.

{\rm(iii)} The polynomials $\varphi^{qR}_n$ with indices $n>K$ vanish identically on $\wt\De_{L,R}$. 
\end{corollary}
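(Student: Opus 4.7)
My plan is to derive all three parts from a single identity: the numerical orthogonality $(\varphi^{qR}_m, \varphi^{qR}_n)_{\ell^2(\wt\De_{L,R}, w^{qR})} = \delta_{m,n}\, h_n^{qR}$, valid for \emph{all} $m, n \ge 0$. Proposition \ref{prop:positivity} says that for any polynomial $f$, the functional $\E(f)$ defined formally in Lemma \ref{lemma3.A} equals the integral of $f$ against $w^{qR}$. Applied to $f = \varphi^{qR}_m \varphi^{qR}_n$, this immediately converts the formal orthogonality relation from Lemma \ref{lemma3.A} into the desired $\ell^2$-orthogonality, \emph{provided} one can specialize the rational identity $\E(\varphi^{qR}_m\varphi^{qR}_n) = \delta_{m,n} h_n^{qR}$ from the transcendental setting of Lemma \ref{lemma3.A} to our admissible numerical values.

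The first task is therefore to verify that no singularities appear upon specialization. With $s_2 s_3 = -q^{-K}$ and $s_0 s_1 > 0$ (both for complex-conjugate and for the two real cases of Definition \ref{positivity}), we have $s_0s_1s_2s_3 = -s_0 s_1 q^{-K} < 0$. Thus every factor $(1 - s_0 s_1 s_2 s_3\, q^{j})$ in $(s_0s_1s_2s_3q^{n-1};q)_n(s_0s_1s_2s_3;q)_{2n}$ takes the form $1 + |\text{positive}|$ and is strictly positive; the same reasoning shows that the prefactor denominator in \eqref{eqn:qRpoly} never vanishes. Hence the polynomials $\varphi^{qR}_n$ and the scalars $h_n^{qR}$ are well-defined real numbers for every $n \ge 0$, and the formal identity $\E(\varphi^{qR}_m\varphi^{qR}_n) = \delta_{m,n} h_n^{qR}$ survives specialization.

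With the master identity $(\varphi^{qR}_m, \varphi^{qR}_n)_{\ell^2} = \delta_{m,n} h_n^{qR}$ in hand, the three statements follow quickly. For (iii), observe that the numerator of $h_n^{qR}$ contains the factor $(-s_2 s_3; q)_n = (q^{-K}; q)_n$, which vanishes precisely for $n > K$; hence the $\ell^2$-squared norm of $\varphi^{qR}_n$ is zero, and since all the weights $w^{qR}(y_m)$ are strictly positive on $\wt\De_{L,R}$, the polynomial must vanish identically on this grid. For (ii) with $n \le K$: the polynomial $\varphi^{qR}_n$ has degree $n < K+1 = |\wt\De_{L,R}|$, so it cannot vanish at all points of $\wt\De_{L,R}$; combined with positivity of the weights, this forces $h_n^{qR} > 0$, which is the desired value of the squared norm. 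Statement (i) is then a standard consequence: the $K+1$ polynomials $\varphi^{qR}_0, \dots, \varphi^{qR}_K$ are pairwise orthogonal and have nonzero norm, so they form an orthogonal set of $K+1$ vectors in the $(K+1)$-dimensional space $\ell^2(\wt\De_{L,R}, w^{qR})$, hence an orthogonal basis.

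The only genuinely delicate step is the singularity check in the second paragraph; once the sign information $s_2 s_3 < 0$ and $s_0 s_1 > 0$ guaranteed by admissibility is used, everything else is routine. Note in particular that I never need to separately prove orthogonality via the operator $D^{qR}$ at this stage (although the proof of Proposition \ref{prop:positivity} does use it): the work has already been done inside Lemma \ref{lemma3.A}, and Proposition \ref{prop:positivity} simply transports the result to the measure-theoretic setting.
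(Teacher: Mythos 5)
Your proposal is correct and follows essentially the same route as the paper: Proposition \ref{prop:positivity} identifies the $\ell^2$ inner product with the moment functional $\E$, Lemma \ref{lemma3.A} (specialized to the admissible numerical parameters, which is legitimate since $s_0s_1s_2s_3<0$ rules out singularities) then gives $(\varphi^{qR}_m,\varphi^{qR}_n)=\de_{m,n}h^{qR}_n$ for all $m,n$, and the factor $(-s_2s_3;q)_n=(q^{-K};q)_n$ forces $h^{qR}_n=0$ for $n>K$, while dimension counting yields the basis claim. The only (harmless) difference is that you establish $h^{qR}_n>0$ for $n\le K$ via a root-counting argument (a degree-$n$ polynomial cannot vanish at all $K+1$ grid points) combined with positivity of the weights, whereas the paper verifies positivity directly from \eqref{eqn:normqR}; both are valid, and your variant is arguably a bit slicker.
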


\begin{proof}
Proposition \ref{prop:positivity} tells us that on the space of polynomials, the inner product $(f,g)$ coincides with $\E(fg)$. This allows us to apply Lemma \ref{lemma3.A} and conclude that 
$(\varphi^{qR}_m, \varphi^{qR}_n)=\de_{m, n}h^{qR}_n$ for all $m,n=0,1,2,\dots$\,.
One can verify from $(\ref{eqn:normqR})$ that $h^{qR}_n > 0$ for all $n = 0, 1, \ldots, K$.
Together with the orthogonality relations, this shows that the polynomials $\varphi^{qR}_n$, $n = 0, 1, \ldots, K$, are linearly independent as functions on the grid $\wt\De_{L, R}$.
Since $\wt\De_{L,R}$ consists of $K+1$ points, it follows that the first $K+1$ polynomials $\varphi^{qR}_n$, restricted to $\wt\De_{L,R}$, form a basis in $\ell^2(\wt\De_{L,R}, w^{qR})$. This proves (i) and (ii).

Next, recall that $s_2 = -\zeta q^R$, $s_3 = \zeta^{-1} q^{-L}$. From  \eqref{eqn:normqR} it is seen that $h^{qR}_n=0$ for all $n>K$, because of the factor 
$$
(-s_2s_3;q)_n=(q^{R-L};q)_n=(q^{-K};q)_n.
$$ 
This proves (iii). (Alternatively, (iii) can be deduced from \eqref{eqn:qRpoly}.)
\end{proof}

\begin{remark}
Based on the material of this subsection, e.g. Corollary $\ref{cor3.A}$, it would make sense to call the polynomials $\varphi^{qR}_n(x; q; s_0, s_1, s_2, s_3)$ the $q$-Racah polynomials only when $s_2 = -\zeta q^{R}$ and $s_3 = \zeta^{-1}q^{-L}$, for some real $\zeta > 0$ and integers $L \geq R$, and to call them the Askey-Wilson polynomials in the generic case.
For instance, for different conditions on the parameters $s_0, s_1, s_2, s_3$, a similar convention is followed in \cite{vDS}.
For simplicity, we use the name $q$-Racah polynomials, regardless of the choice of parameters $s_0, s_1, s_2, s_3$.
\end{remark}

\subsection{Multivariate $q$-Racah polynomials}\label{sec:multivariateqR}

Given $N \geq 1$ and $\la\in\Y(N)$, we can define the \textit{multivariate $q$-Racah polynomial} $\varphi^{qR}_{\la\mid N}(x_1, \ldots, x_N; q; s_0, s_1, s_2, s_3)$ in accordance with \eqref{eq1.B}.
We will be interested in a special case when the parameters $s_0, s_1, s_2, s_3$ are dependent on $N$ as follows:
\begin{multline}\label{eqn:multiqR}
\varphi^{qR}_{\la\mid N}(x_1, \ldots, x_N; q; s_0q^{\frac{1-N}{2}}, s_1q^{\frac{1-N}{2}}, s_2q^{\frac{1-N}{2}}, s_3q^{\frac{1-N}{2}}) :=\\
\frac{\det\left[ \varphi^{qR}_{\ell_j}(x_i; q;s_0q^{\frac{1-N}{2}},s_1q^{\frac{1-N}{2}},s_2q^{\frac{1-N}{2}},s_3q^{\frac{1-N}{2}}) \right]_{i, j = 1}^{N}}{V(x_1, \ldots, x_N)},
\end{multline}
where $\la\in \Y(N)$, $\ell_j = \la_j + N - j$ for $j = 1, \ldots, N$, and $V(x_1, \ldots, x_N)$ is the Vandermonde determinant.
The reason for the scaling factor $q^{\frac{1-N}{2}}$ that affects the four $q$-Racah parameters will be clear later (see Theorem $\ref{thm:qRpolyslimit}$ below).
The multivariate $q$-Racah polynomials $(\ref{eqn:multiqR})$ are well-defined whenever $s_0 \neq 0$ and $s_0s_1s_2s_3 \notin q^{\Z}$, as it can be seen from $(\ref{eqn:qRpoly})$.

In order to talk about orthogonality measures for the multivariate $q$-Racah polynomials, we need to make some restrictions.
We assume that $N$ is an odd integer.
Assume also that $(s_0, s_1, s_2, s_3)$ is admissible in the sense of Definition $\ref{positivity}$, in particular, $s_2 = -\zeta q^{R}, s_3 = \zeta^{-1} q^{-L}$, for some integers $L \geq R$.
We are restricting $N$ to be an odd integer so that
$$
s_2q^{\frac{1-N}{2}} = -\zeta q^{R - \frac{N-1}{2}},\quad s_3q^{\frac{1-N}{2}} = \zeta^{-1} q^{-(L+\frac{N-1}{2})}
$$
have the same form as the parameters $s_2, s_3$, that is, 
$$
s_2q^{\frac{1-N}{2}}\in -\zeta q^{\Z}, \quad  s_3q^{\frac{1-N}{2}} \in \zeta^{-1}q^{\Z}.
$$
We know that the univariate polynomials on the right-hand side of \eqref{eqn:multiqR} are well defined and have real coefficients; hence the same holds for the $N$-variate polynomials  $\varphi^{qR}_{\la \mid N}$, too.

For any odd integer $N \geq 1$, let $\widetilde{\Omega}_N^{qR}$ be the set of $N$-point configurations on the grid
$$
\widetilde{\De}_{L+\frac{N-1}{2}, R - \frac{N-1}{2}} :=\left \{ y_{L + \frac{N-1}{2}}, \; y_{L + \frac{N-3}{2}}, \; \ldots, \; y_{R - \frac{N-3}{2}},\; y_{R - \frac{N-1}{2}} \right\},
$$
and let $M_N^{qR} = M_N^{qR}(\ccdot; q; \zeta; s_0, s_1, s_2, s_3)$ be the probability measure on $\widetilde{\Omega}^{qR}_N$ given by
$$
M_N^{qR}(X; q; \zeta; s_0, s_1, s_2, s_3) := \const\cdot V(X)^2 \prod_{x\in X}{ w^{qR}(x; q; \zeta; s_0q^{\frac{1-N}{2}}, \dots, s_3q^{\frac{1-N}{2}}) },
$$
for any $X\in\widetilde{\Omega}^{qR}_N$. The constant above is chosen to make $M_N^{qR}$ a probability measure.

Let $L^2(\wt\Omega^{qR}_{N}, M^{qR}_N)$ denote the finite-dimensional real Hilbert space of functions on $\wt\Om^{qR}_N$, with the inner product defined by the weight measure $M^{qR}_N$. As before, we set $K:=L-R$. Let $\Y_K(N)$ denote the set of Young diagrams contained in the $N\times K$ rectangle, that is, 
$$
\Y_K(N):=\{\la\in\Y(N): \la_1\le K\}.
$$

The next proposition is an extension of Corollary \ref{cor3.A}. 

\begin{proposition}\label{prop:multivariateqR}
We regard the multivariate $q$-Racah polynomials defined in $(\ref{eqn:multiqR})$ as functions on\/ $\widetilde\Omega^{qR}_N$. 

{\rm(i)} The polynomials with indices $\la\in\Y_K(N)$ form an orthogonal basis in $L^2(\wt\Om^{qR}_N, M^{qR}_N)$.

{\rm(ii)} Their squared norms are the quantities 
$$
h^{qR}_{\la\mid N}(q; s_0, s_1, s_2, s_3):=
\prod_{j = 1}^N{ \frac{h^{qR}_{\la_j + N - j}(q; s_0q^{\frac{1-N}{2}}, s_1q^{\frac{1-N}{2}}, s_2q^{\frac{1-N}{2}}, s_3q^{\frac{1-N}{2}})}
{h^{qR}_{N - j}(q; s_0q^{\frac{1-N}{2}}, s_1q^{\frac{1-N}{2}}, s_2q^{\frac{1-N}{2}}, s_3q^{\frac{1-N}{2}})} }.
$$

{\rm(iii)} The polynomials with indices in $\Y(N)\setminus\Y_K(N)$ vanish identically on $\wt\Om^{qR}_{N}$.
\end{proposition}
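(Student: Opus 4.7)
The plan is to reduce the statement to the univariate Corollary \ref{cor3.A} by combining the determinantal form of the multivariate polynomials in \eqref{eqn:multiqR} with the determinantal form of the density of $M^{qR}_N$, via the Andréief (Heine) identity. First I would verify that the rescaling $s_k\mapsto s_kq^{(1-N)/2}$ preserves the admissibility of Definition \ref{positivity}: with $N$ odd, the shifted $s_2,s_3$ become $-\zeta q^{R'}$ and $\zeta^{-1}q^{-L'}$ where $L':=L+\tfrac{N-1}{2}$, $R':=R-\tfrac{N-1}{2}$ are integers; each of the three alternatives for $(s_0,s_1)$ in Definition \ref{positivity} is preserved (only the integer $m$ gets shifted). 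Hence Corollary \ref{cor3.A} applies to the univariate polynomials entering \eqref{eqn:multiqR} on the grid $\wt\De_{L',R'}$, which has $K+N$ points; writing $K':=L'-R'=K+N-1$, we know these polynomials are orthogonal with weight $w^{qR}(\ccdot;q;\zeta;s_0q^{(1-N)/2},\ldots)$ and positive squared norms $h^{qR}_n$ for $0\le n\le K'$, and vanish identically on the grid for $n\ge K'+1$.

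For parts (i) and (ii), the denominator $V(X)$ in \eqref{eqn:multiqR} cancels against the factor $V(X)^2$ in the density of $M^{qR}_N$, giving
\[
\langle \varphi^{qR}_{\la\mid N},\varphi^{qR}_{\mu\mid N}\rangle \;=\; \const\cdot\sum_{X\in\wt\Om^{qR}_N}\det[\varphi^{qR}_{\ell_j}(x_i)]\det[\varphi^{qR}_{m_j}(x_i)]\prod_{x\in X}w^{qR}(x),
\]
where $\ell_j=\la_j+N-j$ and $m_j=\mu_j+N-j$. By Andréief's identity this collapses to $\const\cdot\det\bigl[\sum_y\varphi^{qR}_{\ell_j}(y)\varphi^{qR}_{m_k}(y)w^{qR}(y)\bigr]_{j,k=1}^N$, and the univariate orthogonality reduces the $(j,k)$ entry to $\de_{\ell_j,m_k}h^{qR}_{\ell_j}$. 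Since $(\ell_j)$ and $(m_k)$ are strictly decreasing $N$-tuples, this determinant vanishes unless $\la=\mu$, in which case it equals $\prod_j h^{qR}_{\ell_j}$. Specializing to $\la=\mu=\emptyset$, where $\varphi^{qR}_{\emptyset\mid N}\equiv 1$ and the inner product equals $1$ by normalization, pins down $\const^{-1}=\prod_j h^{qR}_{N-j}$ and yields the squared-norm formula of (ii). Positivity of the $h^{qR}_n$ for $0\le n\le K'$ (from Corollary \ref{cor3.A}(ii)) then shows that each $\varphi^{qR}_{\la\mid N}$ with $\la\in\Y_K(N)$ is a nonzero element of $L^2(\wt\Om^{qR}_N,M^{qR}_N)$; the orthogonality gives linear independence; and the dimension count $|\wt\Om^{qR}_N|=\binom{K+N}{N}=|\Y_K(N)|$ upgrades these polynomials to a basis.

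For part (iii), if $\la\in\Y(N)\setminus\Y_K(N)$ then $\la_1>K$, so $\ell_1=\la_1+N-1\ge K+N>K'$. By Corollary \ref{cor3.A}(iii) applied to the shifted parameters, $\varphi^{qR}_{\ell_1}$ vanishes identically on $\wt\De_{L',R'}$; consequently the first column ($j=1$) of the matrix in \eqref{eqn:multiqR} is zero for every $X\in\wt\Om^{qR}_N$, forcing $\varphi^{qR}_{\la\mid N}\equiv 0$ on $\wt\Om^{qR}_N$.

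The main bookkeeping obstacle is the admissibility check for the shifted parameters together with the matching of the original grid $\wt\De_{L,R}$ with the enlarged grid $\wt\De_{L',R'}$; this is precisely what the odd-$N$ hypothesis is there to ensure. Beyond this, the argument is the standard passage from univariate to multivariate orthogonality via Andréief, and I do not anticipate further genuine difficulties.
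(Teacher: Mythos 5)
Your proposal is correct and follows essentially the same route as the paper: part (iii) via the vanishing of the univariate polynomial $\varphi^{qR}_{\la_1+N-1}$ on the enlarged grid, and parts (i)--(ii) by cancelling $V(X)$ against $V(X)^2$, collapsing the double determinant sum via Andr\'eief (the paper phrases this as expanding the determinants and using univariate orthogonality), and normalizing with $\la=\mu=\emptyset$. The only cosmetic differences are that the paper performs the admissibility check for the shifted parameters in the discussion preceding the proposition rather than inside the proof, and it deduces the basis property from the spanning argument plus (iii) rather than from positivity of the norms plus a dimension count; both are valid.
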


\begin{proof}
These are direct consequences of the corresponding claims of Corollary \ref{cor3.A}. 

We begin with item (iii). Set $L':=L+\frac{N-1}2$, $R':=R-\frac{N-1}2$.  If $\la\in\Y(N)\setminus\Y_K(N)$, then $\la_1+N-1>L'-R'$. By claim (iii) of Corollary \ref{cor3.A}, the polynomial $\varphi^{qR}_{\la_1+N-1}$ vanishes on $\wt\De_{L',R'}$.  It follows that the determinant on the right-hand side of \eqref{eqn:multiqR} vanishes provided that all arguments $x_j$ are in $\wt\De_{L',R'}$, which proves (iii). 

Observe that by restricting all $N$-variate symmetric polynomials to $\wt\Om^{qR}_N$ we obtain all functions on this finite set. Together with item (iii) just proved, this implies that the polynomials $\varphi^{qR}_{\la\mid N}$ with indices $\la\in\Y_K(N)$ span the whole space of functions on $\wt\Om^{qR}_N$. Since $|\Y_K(N)|=|\wt\Om^{qR}_N|$, we see that these polynomials form a basis of that space. 

Now the proof of (i) and (ii) is achieved by the following standard argument.  Let us abbreviate
\begin{gather*}
\varphi_n=\varphi^{qR}_n, \quad \varphi_{\la\mid N}=\varphi^{qR}_{\la\mid N}, \quad \De=\widetilde{\De}_{L',R'}, \quad \Om_N=\wt\Om^{qR}_N, \\
w(x)=w^{qR}(x; q; \zeta; s_0q^{\frac{1-N}{2}}, \dots, s_3q^{\frac{1-N}{2}}), \quad M_N(X)=M_N^{qR}(X; q; \zeta; s_0, s_1, s_2, s_3),\\
h_n=h^{qR}_{n}(q; s_0q^{\frac{1-N}{2}}, s_1q^{\frac{1-N}{2}}, s_2q^{\frac{1-N}{2}}, s_3q^{\frac{1-N}{2}}).
\end{gather*}
Given $\la,\mu\in\Y(N)$, the inner product $(\varphi_{\la\mid N}, \varphi_{\mu\mid N})$ in $L^2(\Om_N,M_N)$ is:
\begin{gather*}
(\varphi_{\la\mid N}, \varphi_{\mu\mid N})=\sum_{X\in\Om_N}\varphi_{\la\mid N}(X)\varphi_{\mu\mid N}(X) M_N(X)\\
=\const \sum_{\substack{x_1<\dots<x_N\\ x_1,\dots,x_N\in\De}}\det[\varphi_{\la_j+N-j}(x_i)]\det[\varphi_{\mu_j+N-j}(x_i)]w(x_1)\dots w(x_N)\\
=\frac{\const}{N!} \sum_{x_1,\dots,x_N\in\De}\det[\varphi_{\la_j+N-j}(x_i)]\det[\varphi_{\mu_j+N-j}(x_i)]w(x_1)\dots w(x_N).
\end{gather*}
Expanding the determinants and using the orthogonality relations for the univariate polynomials gives
$\const\, \de_{\la, \mu}\prod_{j=1}^N h_{\la_j+N-j}$. Next, when $\la$ and $\mu$ are the empty partition, the result should be equal to $1$, whence $\const=1/\prod_{j=1}^N h_{N-j}$. Therefore, we get 
$$
(\varphi_{\la\mid N}, \varphi_{\mu\mid N})=\de_{\la, \mu}\prod_{j=1}^N\dfrac{h_{\la_j+N-j}}{h_{N-j}},
$$
which completes the proof of (i) and (ii). 
\end{proof}

\begin{remark}
Similar results to Proposition $\ref{prop:multivariateqR}$ are proved in \cite{vDS}, for $q$-Racah polynomials with the additional parameter $t$.
\end{remark}

The squared norms $h_n(q; s_0q^{\frac{1-N}{2}}, s_1q^{\frac{1-N}{2}}, s_2q^{\frac{1-N}{2}}, s_3q^{\frac{1-N}{2}})$ have closed form formulas, as shown in Corollary \ref{cor3.A}.
To write a closed formula for $h^{qR}_{\la\mid N}(q; s_0, s_1, s_2, s_3)$, the following definition is convenient.

\begin{definition}\label{def:pochlambda}
For any partition $\mu$, the rational function $(x; q)_{\mu}$ is defined by
$$(x; q)_{\mu} := \prod_{i = 1}^{\ell(\mu)}{(xq^{1-i}; q)_{\mu_i}}.$$
Moreover, we use the standard notation: $(x_1, \ldots, x_m; q)_{\mu} := \prod_{i=1}^m{(x_i; q)_{\mu}}$.
Also define $$\widehat{\mu} := (2\mu_1, 2\mu_1, 2\mu_2, 2\mu_2, 2\mu_3, \dots)$$ as the partition whose Young diagram is the Young diagram of $\mu$ after replacing each square by a $2\times 2$ square.
\end{definition}

Then one can verify, for any partition $\la\in\Y(N)$,
\begin{equation}\label{eqn:hlambdaqR}
h^{qR}_{\la\mid N}(q; s_0, s_1, s_2, s_3) = (-1)^{|\la|} \cdot 
\frac{(s_0s_1s_2s_3 q^{-N}, q^{N}; q)_{\la}\prod_{0 \leq i < j \leq 3}(-s_i s_j; q)_{\la}}
{(s_0s_1s_2s_3; q)_{\widehat{\la}}}.
\end{equation}

\subsection{Construction of the $q$-Racah symmetric functions}\label{sec:construction}

For each $\la\in\Y$, we construct an element $\Phi^{qR}_{\la}(\ccdot; q; s_0, s_1, s_2, s_3)\in\Sym$ that arises as a limit of renormalized, multivariate $q$-Racah polynomials.
In this subsection, $s_0, s_1, s_2, s_3$ can be any complex numbers such that $s_0 \neq 0$ and $s_0s_1s_2s_3 \notin q^{\Z}$.

\begin{definition}\label{df:qRfunction}
For any partition $\la$, we define the \textit{$q$-Racah symmetric function} $\Phi^{qR}_{\la}(\ccdot; q; s_0, s_1, s_2, s_3) \in \Sym$ by
\begin{equation*}
\Phi^{qR}_{\la}(\ccdot; q; s_0, s_1, s_2, s_3) := \sum_{\mu\subseteq\la}
\sigma^{qR}(\la, \mu; q; s_0, s_1, s_2, s_3) I_{\mu}^{BC}(\ccdot; q; s_0),
\end{equation*}
where $I_{\mu}^{BC}(\ccdot; q; s_0) \in\Sym$ are defined in Section $\ref{sec:IBC}$, and
\begin{multline}\label{eqn:sigmaqR}
\sigma^{qR}(\la, \mu; q; s_0, s_1, s_2, s_3) := \\
s_0^{|\mu| - |\la|} \det\left[ \frac{(-s_0 s_1q^{\mu_k+1-k}, -s_0 s_2q^{\mu_k+1-k}, -s_0 s_3q^{\mu_k+1-k}; q)_{\la_j - j - \mu_k + k}}{q^{(\mu_k + 1 - k)(\la_j - j - \mu_k + k)}(q, s_0 s_1 s_2 s_3 q^{\la_j + \mu_k + 1 - j - k}; q)_{\la_j - j - \mu_k + k}} \right]_{j, k = 1}^{\ell(\la)}
\end{multline}
for partitions $\mu \subseteq \la$, with the understanding that $\sigma^{qR}(\emptyset, \emptyset; q; s_0, s_1, s_2, s_3) := 1$.
\end{definition}

Note that the expression on the right-hand of \eqref{eqn:sigmaqR} automatically vanishes if $\mu\not\subseteq\la$ and that $\sigma^{qR}(\la, \la; q; s_0, s_1, s_2, s_3) = 1$: this follows from Lemma \ref{lem:matrices} below. Note also that
$$
\Phi^{qR}_{\la}(\ccdot; q; s_0, s_1, s_2, s_3) = S_{\la} + \textrm{lower degree terms}, \textrm{ for all }\la\in\Y,
$$
because of  Proposition \ref{prop2.D} (iii).  It follows that $\{ \Phi^{qR}_{\la}(\ccdot; q; s_0, s_1, s_2, s_3) \}_{\la\in\Y}$ is an inhomogeneous basis of $\Sym$.

\begin{lemma}\label{lem:matrices}
Let $f_{j, k}(\ell, m)$, $1 \leq j,k \leq N$, be complex-valued functions that take pairs $(\ell, m)\in (\Z_{\geq 0})^2$ as arguments and such that $f_{j, k}(0, 0) = 1$.
For any two partitions $\la, \mu\in\Y(N)$, define
\begin{equation}\label{sigma:munu}
\rho(\la, \mu) := \det\left[ \frac{f_{j, k}(\la_j, \mu_k)}{(q; q)_{\la_j - \mu_k + k - j}} \right]_{j, k = 1}^{N}.
\end{equation}
Then $\rho(\la, \mu) = 0$, unless $\mu \subseteq \la$.
Moreover, if $\mu \subseteq \la$, then
\begin{equation}\label{sigma:munu2}
\rho(\la, \mu) = \det\left[ \frac{f_{j, k}(\la_j, \mu_k)}{(q; q)_{\la_j - \mu_k + k - j}} \right]_{j, k = 1}^{\ell(\la)}.
\end{equation}
Finally,  $\rho(\la,\la)=1$. 
\end{lemma}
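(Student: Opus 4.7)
The proof is a determinantal argument hinging on the convention that $1/(q;q)_n = 0$ for every negative integer $n$. Under this rule, the entry $f_{j,k}(\la_j,\mu_k)/(q;q)_{\la_j-\mu_k+k-j}$ vanishes whenever $\la_j - \mu_k + k - j < 0$; the plan is to locate large rectangular blocks of such zeros and exploit them by block-determinant manipulations.

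For the vanishing when $\mu \not\subseteq \la$, I would let $i$ be the smallest index with $\mu_i > \la_i$. By monotonicity of partitions, for any $j \geq i$ and $k \leq i$ one has $\la_j \leq \la_i < \mu_i \leq \mu_k$ and $k - j \leq 0$, hence $\la_j - \mu_k + k - j < 0$, so the $(j,k)$ entry vanishes. This produces an $(N - i + 1) \times i$ zero block in the lower-left. Since $(N-i+1) + i = N+1 > N$, a Laplace expansion along the bottom $N-i+1$ rows must select $N-i+1$ columns from only $N-i$ nonzero ones, which is impossible, so $\rho(\la,\mu) = 0$.

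For the reduction from size $N$ to size $\ell(\la)$ when $\mu \subseteq \la$, I set $L := \ell(\la)$ and note that $\ell(\mu) \leq L$. Cut the matrix at row/column $L$. For $j > L$ and $k \leq L$: $\la_j = 0$ and $k - j < 0$, so the index is negative and the bottom-left block vanishes. For $j, k > L$: both $\la_j = \mu_k = 0$, so the entries become $f_{j,k}(0,0)/(q;q)_{k-j} = 1/(q;q)_{k-j}$ by the normalization, and this bottom-right block is upper triangular with unit diagonal and determinant $1$. Block triangularity then yields $\rho(\la,\mu) = \det[f_{j,k}(\la_j,\mu_k)/(q;q)_{\la_j-\mu_k+k-j}]_{j,k=1}^{L}$.

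Finally, at $\mu = \la$ the reduced $L \times L$ matrix is itself upper triangular, because for $j > k$ we have $\la_j \leq \la_k$ and $k - j < 0$, forcing negative index; hence $\rho(\la,\la) = \prod_{j=1}^L f_{j,j}(\la_j, \la_j)$. In the setting intended by the lemma --- in particular the $\sigma^{qR}$ matrix of Definition \ref{df:qRfunction}, where the $q$-Pochhammer factors making up $f_{j,k}$ all have index $\la_j - j - \la_j + j = 0$ on the diagonal and thus evaluate to $1$ --- each diagonal factor is $1$, yielding $\rho(\la,\la) = 1$. I expect the main obstacle to be the Laplace-expansion count in the first step; the remaining two parts are routine once the block structure is identified.
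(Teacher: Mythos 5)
Your proof is correct and follows essentially the same route as the paper's: both rest on the convention $1/(q;q)_n=0$ for $n<0$, locate the same rectangular zero blocks (an $(N-i+1)\times i$ block for the vanishing claim, and the block-triangular reduction to the top-left $\ell(\la)\times\ell(\la)$ corner). Your caveat on the last claim is well taken --- the paper simply calls the matrix at $\mu=\la$ ``unitriangular,'' whereas in fact $\rho(\la,\la)=\prod_{j}f_{j,j}(\la_j,\la_j)$, and the unit diagonal comes from the specific $f_{j,k}$ used in Definition \ref{df:qRfunction} (where the relevant Pochhammer indices vanish on the diagonal) rather than from the stated hypothesis $f_{j,k}(0,0)=1$ alone.
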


\begin{proof}
Denote by $A=[A_{jk}]$ the $N\times N$ matrix on the right-hand side of \eqref{sigma:munu}.

Suppose $\mu\not\subseteq\la$. This means that there exists $i\in\{1,\dots,N\}$ such that $\la_i<\mu_i$. Then, for any pair $(j,k)$ with $j\ge i\ge k$, we have  $\la_j<\mu_k$ and hence $\la_j-\mu_k+k-j<0$, which entails $A_{jk}=0$,  because $(q;q)_p^{-1}=0$ for $p\in\Z_{<0}$. It follows that $\det A=0$, which proves the first claim.

Suppose now $\mu\subseteq\la$, in particular, $\ell(\mu)\le\ell(\la)$. A similar argument (together with the assumption $f_{j, k}(0, 0) = 1$) shows that, for any pair $(j,k)$ such that $j>\ell(\la)$ and $j\ge k$, one has $A_{jk}=\de_{jk}$. This proves the second claim.

Finally, if $\la=\mu$, then $A$ is strictly upper unitriangular, so that $\det A=1$. This proves the third claim. 
\end{proof}

\begin{theorem}\label{thm:qRpolyslimit}
For any partition $\la\in\Y$ and $N \geq \ell(\la)$, denote the renormalized, multivariate $q$-Racah polynomial
$$q^{\frac{N-1}{2}|\la|}\cdot\varphi_{\la | N}^{qR}
(x_1q^{\frac{1-N}{2}}, \dots, x_Nq^{\frac{1-N}{2}}; q; s_0q^{\frac{1-N}{2}}, s_1q^{\frac{1-N}{2}}, s_2 q^{\frac{1-N}{2}}, s_3q^{\frac{1-N}{2}}).$$
by $\widetilde\varphi_{\la | N}^{qR}(\ccdot; q; s_0, s_1, s_2, s_3) = \widetilde\varphi_{\la | N}^{qR}(x_1, \dots, x_N; q; s_0, s_1, s_2, s_3)$.
Then
$$
\widetilde\varphi_{\la | N}^{qR}(\ccdot; q; s_0, s_1, s_2, s_3) \rightarrow \Phi^{qR}_{\la}(\ccdot; q; s_0, s_1, s_2, s_3)
$$
in the sense of Definition $\ref{def2.convergence}$.
\end{theorem}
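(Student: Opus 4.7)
The strategy is to apply multilinearity of the determinant to the Weyl-type formula \eqref{eqn:multiqR} and reduce the multivariate claim to an asymptotic analysis of univariate expansion coefficients, after which Proposition \ref{prop2.D} produces the limit.

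Condition (1) of Definition \ref{def2.convergence} is automatic, because $\widetilde\varphi^{qR}_{\la\mid N}$ is a symmetric polynomial of degree $|\la|$ independent of $N$. The content of the theorem is thus condition $(2')$, and this is what I would target.

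Starting from the $_4\phi_3$-representation \eqref{eqn:qRpoly}, I would use the identity
\[
(s_0 v; q)_k (-s_0 v^{-1}; q)_k \;=\; \prod_{j=0}^{k-1}\bigl(1 - s_0 q^j x - s_0^2 q^{2j}\bigr), \qquad x = v - v^{-1},
\]
to expand each renormalized univariate polynomial in the form
\[
\widetilde\varphi^{qR}_\ell(x_j; q; s_0,\ldots,s_3; N) \;=\; \sum_{k=0}^\ell \tilde b(\ell, k; q; s; N)\,\bigl(x_j \mid c^{(N)}_0, c^{(N)}_1,\ldots\bigr)^k,
\]
where the nodes $c^{(N)}_i$ arise from the factorization after the scaling $s_i \mapsto s_i q^{(1-N)/2}$ and $x \mapsto x q^{(1-N)/2}$; the coefficient $\tilde b$ is read directly off the $_4\phi_3$-summand, with the prefactor $q^{(N-1)\ell/2}$ absorbed into its $q$-exponent.

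Substituting into \eqref{eqn:multiqR} and applying multilinearity of the determinant row by row gives
\[
\widetilde\varphi^{qR}_{\la\mid N}(x; q; s) \;=\; \sum_{\mu \in \Y(N)} C_N(\la, \mu)\, s_{\mu\mid N}\bigl(x \mid c^{(N)}_0, c^{(N)}_1, \ldots\bigr),
\]
where $C_N(\la,\mu) = \det\bigl[\tilde b(\la_i+N-i,\,\mu_j+N-j)\bigr]_{i,j=1}^N$. Lemma \ref{lem:matrices} forces $C_N(\la,\mu) = 0$ unless $\mu\subseteq\la$, so the sum is effectively finite.

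Next, I would extract per-row factors $(q;q)_{\la_i+N-i}\,s_0^{-(\la_i+1-i)}$ and per-column factors $(q;q)_{\mu_j+N-j}^{-1}\,s_0^{\mu_j+1-j}$ from the matrix entries; the remaining symmetric part coincides exactly with the $(j,k)$-entry of the determinant defining $\sigma^{qR}$ in \eqref{eqn:sigmaqR}. A direct count shows that the collected $s_0$-factors cancel the global $s_0^{|\mu|-|\la|}$ in \eqref{eqn:sigmaqR}, leaving
\[
C_N(\la, \mu) \;=\; \sigma^{qR}(\la,\mu)\cdot \frac{\prod_{i=1}^N (q;q)_{\la_i + N - i}}{\prod_{j=1}^N (q;q)_{\mu_j + N - j}} \;\longrightarrow\; \sigma^{qR}(\la,\mu) \quad (N\to\infty),
\]
since the Pochhammer ratio tends to $1$. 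Finally, Proposition \ref{prop2.D}(i) asserts that $s_{\mu\mid N}(\ccdot \mid c^{(N)}_0, c^{(N)}_1,\ldots) \to I^{BC}_\mu(\ccdot; q; s_0)$ in the sense of Definition \ref{def2.convergence}; combined with the coefficient convergence and the finite support $\mu\subseteq\la$, condition $(2')$ is verified and the limit equals $\sum_{\mu\subseteq\la}\sigma^{qR}(\la,\mu)I^{BC}_\mu(\ccdot;q;s_0) = \Phi^{qR}_\la$.

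The main obstacle is the identification in the final step: the nodes $c^{(N)}_i$ produced by the $_4\phi_3$ factorization are close to, but formally distinct from, the BC nodes $s_0 q^i + s_0^{-1}q^{N-i-1}$ used to define $I^{BC}_{\mu\mid N}(\ccdot;q;s_0)$, and it is precisely the shift by $q^{(1-N)/2}$ built into the renormalization that ensures the tail-limits $\widetilde c_j$ and the power-sum limits $r_k$ entering Proposition \ref{prop2.B}(ii) match those of the BC family, so that the two limits agree. Verifying this node matching (equivalently, the Cauchy-identity matching on the right-hand side of Proposition \ref{prop2.D}(ii)) is the delicate point of the argument; everything else is determinant bookkeeping and asymptotics of $q$-Pochhammer symbols.
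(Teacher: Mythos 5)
Your overall route is the same as the paper's: read off a Newton-type expansion of the univariate polynomial from the ${}_4\phi_3$ representation, pass to $N$ variables by Cauchy--Binet, invoke Lemma \ref{lem:matrices} for the vanishing unless $\mu\subseteq\la$, peel off the ratio $\prod_i(q;q)_{\la_i+N-i}/(q;q)_{\mu_i+N-i}\to1$ to identify the limiting coefficients with $\sigma^{qR}(\la,\mu)$, and finish with Proposition \ref{prop2.D}. That bookkeeping matches the paper's Steps 2--4 (one small slip: the extracted $s_0$-powers reproduce, rather than cancel, the prefactor $s_0^{|\mu|-|\la|}$ in \eqref{eqn:sigmaqR}; the net identity $C_N(\la,\mu)=\sigma^{qR}(\la,\mu)\prod_i(q;q)_{\la_i+N-i}/(q;q)_{\mu_i+N-i}$ is the paper's \eqref{sigmaNsigma}).

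The genuine gap is precisely the point you flag at the end, and your proposed way around it does not work. From your own identity, the $j$-th Newton node is the root of $1-s_0q^jx-s_0^2q^{2j}$, namely $s_0^{-1}q^{-j}-s_0q^j$, and after the substitutions $x\mapsto xq^{(1-N)/2}$, $s_k\mapsto s_kq^{(1-N)/2}$ the nodes become $-s_0q^{i}+s_0^{-1}q^{N-1-i}$. If these really are distinct from the nodes $s_0q^i+s_0^{-1}q^{N-i-1}$ defining $I^{BC}_{\mu\mid N}(\ccdot;q;s_0)$, then the tail limits $\wt c_j=s_0^{-1}q^{-j-1}$ do coincide, but the power sums converge to $((-s_0)^k+s_0^{-k})/(1-q^k)$, which differs from the BC value $(s_0^k+s_0^{-k})/(1-q^k)$ for every odd $k$. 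By Proposition \ref{prop2.B}(ii)--(iii) the limits $I_\mu$ are uniquely determined by the data $(\wt c_j, r_k)$ through the Cauchy identities, so a mismatch in the $r_k$ forces the two families of limits to be genuinely different symmetric functions: ``close'' nodes do not yield equal limits, and the claimed matching of the $r_k$ is false under your own description of the nodes. The paper avoids any such limiting argument by establishing an exact equality at every finite $N$: after the rescaling the Newton nodes are identified with the nodes of $I^{BC}_{\mu\mid N}(\ccdot;q;s_0)$ themselves, so the prelimit polynomial is literally a finite linear combination of the $I^{BC}_{\mu\mid N}$ and Proposition \ref{prop2.D} applies directly. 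To close your argument you must carry out this node computation explicitly and resolve the sign of the $s_0q^j$ term (your factorization produces $-s_0q^j$ where the nodes in \eqref{varphiexpansion} and in the definition of $I^{BC}_{\mu\mid N}$ carry $+s_0q^j$); until the nodes are shown to coincide exactly, the final identification --- and hence the theorem --- is unsupported.
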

\begin{proof}
\textit{Step 1.}
The set $\{I_{\mu\mid N}^{BC}(\ccdot; q; s_0)\}_{\mu\in\Y(N)}$ of polynomials in Section $\ref{sec:IBC}$ is a basis of $\Sym(N)$.
Then, for any $\la\in\Y(N)$, there is a unique expansion of the form
\begin{multline}\label{eqn:expansionphi}
\widetilde{\varphi}^{qR}_{\la\mid N}(x_1, \ldots, x_N; q; s_0, s_1, s_2, s_3) =\\
\sum_{\mu\in\Y(N)}{ \sigma^{qR}_N(\la, \mu; q; s_0, s_1, s_2, s_3) I_{\mu\mid N}^{BC}(x_1, \ldots, x_N; q; s_0) },
\end{multline}
for some coefficients $\sigma^{qR}_N(\la, \mu; q; s_0, s_1, s_2, s_3)$.
We will prove that
\begin{equation}\label{eqn:vanishingsigma}
\sigma^{qR}_N(\la, \mu; q; s_0, s_1, s_2, s_3) = 0, \textrm{ unless }\mu\subseteq\la,
\end{equation}
and
\begin{equation}\label{eqn:equalitysigma}
\lim_{N \rightarrow \infty}{\sigma^{qR}_N(\la, \mu; q; s_0, s_1, s_2, s_3)} =
\sigma^{qR}(\lambda, \mu; q; s_0, s_1, s_2, s_3), \textrm{ for any }\mu\subseteq\la.
\end{equation}
Proposition $\ref{prop2.D}$ proves the limits
\begin{equation}\label{BClimits}
I^{BC}_{\mu\mid N}(x_1, \ldots, x_N; q; s_0) \rightarrow I^{BC}_{\mu}(x_1, x_2, \ldots; q; s_0),\ \mu\in\Y,
\end{equation}
in the sense of Definition $\ref{def2.convergence}$.

From $(\ref{eqn:vanishingsigma})$, the number of summands in the right-hand side of $(\ref{eqn:expansionphi})$ is finite and independent of $N$.
Then, because of $(\ref{eqn:equalitysigma})$ and $(\ref{BClimits})$, the theorem follows.
All that remains is to prove $(\ref{eqn:vanishingsigma})$ and $(\ref{eqn:equalitysigma})$; in the next steps, we will prove both of these statements.

\smallskip

\textit{Step 2.}
From $(\ref{eqn:qRpoly})$, we have
\begin{equation}\label{varphiexpansion}
\varphi_{\ell}^{qR}(x; q; s_0, s_1, s_2, s_3) = \sum_{m=0}^{\ell}{ c(\ell, m)
(x \mid s_0 + s_0^{-1}, s_0q + s_0^{-1}q^{-1}, \dots )^m },
\end{equation}
where, by denoting $\chi := s_0s_1s_2s_3$,
\begin{equation*}
c(\ell, m) := \frac{(q; q)_{\ell}}{(q; q)_m} \cdot \frac{\prod_{i=1}^3(-s_0s_iq^m; q)_{\ell - m}}
{q^{m(\ell - m)}s_0^{\ell - m}(q, \chi q^{\ell + m - 1}; q)_{\ell - m}}, \ \ell \geq m \geq 0.
\end{equation*}
Do the change of variables $x\mapsto xq^{\frac{1-N}{2}}$ in the expansion $(\ref{varphiexpansion})$, and use
\begin{multline*}
(xq^{\frac{1-N}{2}} \mid s + s^{-1}, sq + s^{-1}q^{-1}, \ldots)^m\\
= q^{\frac{(1-N)m}{2}} (x \mid sq^{\frac{N-1}{2}} + s^{-1}q^{\frac{N-1}{2}}, sq^{\frac{N+1}{2}} + s^{-1}q^{\frac{N-3}{2}}, \ldots)^m,
\end{multline*}
to obtain
\begin{multline}\label{varphiexpansion2}
q^{\frac{N-1}{2}\ell}\cdot\varphi_{\ell}^{qR}(xq^{\frac{1-N}{2}}; q; s_0, s_1, s_2, s_3) =\\
\sum_{m=0}^{\ell}{ q^{\frac{N-1}{2}(\ell - m)} c(\ell, m)
(x \mid s_0q^{\frac{N-1}{2}} + s_0^{-1}q^{\frac{N-1}{2}}, s_0q^{\frac{N+1}{2}} + s_0^{-1}q^{\frac{N-3}{2}}, \ldots)^m }.
\end{multline}
Further, do the change $s_k \mapsto s_kq^{\frac{1-N}{2}}$, for $k = 0, 1, 2, 3$, in $(\ref{varphiexpansion2})$ to obtain
\begin{multline}\label{varphiexpansion3}
q^{\frac{N-1}{2}\ell}\cdot\varphi_{\ell}^{qR}(xq^{\frac{1-N}{2}}; q; s_0q^{\frac{1-N}{2}}, s_1q^{\frac{1-N}{2}}, s_2q^{\frac{1-N}{2}}, s_3q^{\frac{1-N}{2}}) =\\
\sum_{m=0}^{\ell}{ q^{\frac{N-1}{2}(\ell - m)} c_N(\ell, m)
(x \mid s_0 + s_0^{-1}q^{N-1}, s_0q + s_0^{-1}q^{N-2}, \ldots)^m },
\end{multline}
where
\begin{gather}
c_N(\ell, m) := \left. c(\ell, m) \right|_{s_k \mapsto s_kq^{\frac{1-N}{2}}, \; \forall  k = 0, 1, 2, 3}\nonumber\\
= \frac{(q; q)_{\ell}}{(q; q)_{m}} \cdot
\frac{\prod_{i=1}^3(-s_0s_i q^{m+1-N}; q)_{\ell - m}}{q^{m(\ell - m)}(s_0q^{\frac{1-N}{2}})^{\ell - m}(q, \chi q^{\ell+m+1-2N}; q)_{\ell - m}}\nonumber\\
= q^{\frac{N-1}{2}(\ell - m)} \frac{(q; q)_{\ell}}{(q; q)_{m}} \cdot
\frac{s_0^{m - \ell}\prod_{i=1}^3(-s_0s_i q^{m+1-N}; q)_{\ell - m}}{q^{m(\ell - m)}(q, \chi q^{\ell+m+1-2N}; q)_{\ell - m}}.\label{cN.ellm}
\end{gather}

\smallskip

\textit{Step 3.}
We generalize $(\ref{varphiexpansion3})$ to several variables by making use of the following general fact.
Let $\{g_m^{(i)}\}_{m \geq 0; N \geq i \geq 1}$ be an array of variables, and $\{f_{j}^{(i)}\}_{N \geq i, j \geq 1}$ be the linear combinations
\begin{equation}\label{CB1}
f^{(i)}_j := \sum_{m=0}^{\infty}{a^{(j)}_m g_m^{(i)}},
\end{equation}
for some complex numbers $a^{(j)}_m$, $j = 1, \ldots, N$, $m\geq 0$, such that $a^{(j)}_m = 0$ for large enough $m$ (depending on $j$).
Then
$$\det[ f_j^{(i)} ]_{i, j = 1}^N = \sum_{0 \leq m_N < \ldots < m_1}{\det[ a^{(j)}_{m_k} ]_{j, k = 1}^N \det[ g^{(i)}_{m_k} ]_{i, k = 1}^N}.$$
This identity is the well-known \textit{Cauchy-Binet formula}. Note that the last sum above is finite, given the vanishing condition on the coefficients $a_m^{(j)}$.

Let $x_1, \ldots, x_N$ be $N$ variables; let the array $\{g^{(i)}_{m}\}_{m \geq 0; N \geq i \geq 1}$ consist of the following  polynomials
$$g_m^{(i)} := (x_i \mid s_0 + s_0^{-1}q^{N-1}, s_0q + s_0^{-1}q^{N-2}, \ldots)^m.$$
Moreover, the array of linear combinations $\{f_j^{(i)}\}_{N \geq i, j \geq 1}$ consists of the polynomials
$$f_j^{(i)} := q^{\frac{N-1}{2}\ell_j} \cdot \varphi_{\ell_j}^{qR}(x_i q^{\frac{1-N}{2}}; q; s_0q^{\frac{1-N}{2}}, s_1q^{\frac{1-N}{2}}, s_2q^{\frac{1-N}{2}}, s_3q^{\frac{1-N}{2}}),$$
where $\la$ is a partition of length at most $N$, and $\ell_j := \la_j + N - j$, $j = 1, \ldots, N$.
Next, because of $(\ref{varphiexpansion3})$, the coefficients $\{a^{(j)}_m\}_{m \geq 0; N \geq j \geq 1}$ that satisfy $(\ref{CB1}) $ are
$$a_m^{(j)} := q^{\frac{N-1}{2}(\ell_j - m)}c_N(\ell_j, m) \mathbf{1}_{\{ m \leq \ell_j \}}.$$
Note that the formula for $c_N(\ell_j, m)$ in $(\ref{cN.ellm})$ contains the factor $(q; q)_{\ell_j - m}^{-1}$, which vanishes whenever $m > \ell_i$.
Therefore, the factor $\mathbf{1}_{\{m \leq \ell_i\}}$ is irrelevant in the last display and can be removed, i.e.,
$$a_m^{(j)} = q^{\frac{N-1}{2}(\ell_j - m)}c_N(\ell_j, m) =
q^{(N-1)(\ell_j - m)} \frac{(q; q)_{\ell_j}}{(q; q)_{m}} \cdot
\frac{s_0^{m - \ell_j}\prod_{i=1}^3(-s_0s_i q^{m+1-N}; q)_{\ell_j - m}}{q^{m(\ell_j- m)}(q, \chi q^{\ell_j + m + 1 - 2N}; q)_{\ell_j - m}}.$$
Then the Cauchy-Binet formula gives
\begin{multline*}
\det\left[ q^{\frac{N-1}{2}\ell_j} \cdot \varphi_{\ell_j}^{qR}(x_i q^{\frac{1-N}{2}}; q; s_0q^{\frac{1-N}{2}}, s_1q^{\frac{1-N}{2}}, s_2q^{\frac{1-N}{2}}, s_3q^{\frac{1-N}{2}}) \right]_{i, j = 1}^N =\\
\sum_{0 \leq m_N < \ldots < m_1}{ \det\left[ a^{(j)}_{m_k} \right]_{j, k = 1}^N
\det\left[ (x_i \mid s_0 + s_0^{-1}q^{N-1}, s_0q + s_0^{-1}q^{N-2}, \ldots)^{m_k} \right]_{i, k = 1}^N }.
\end{multline*}
There is a bijection between $N$-tuples of integers $m_1 > \ldots > m_N \geq 0$ and partitions $\mu\in\Y(N)$ via $m_k := \mu_k + N - k$,  $k = 1, 2, \ldots, N$.
Thus after dividing the previous identity by the Vandermonde determinant $V(x_1, \ldots, x_N)$, we obtain
\begin{multline}\label{varphiexpansion4}
\widetilde{\varphi}^{qR}_{\la\mid N}(x_1, \ldots, x_N; q; s_0, s_1, s_2, s_3)\\
= q^{\frac{N-1}{2}|\la|} \cdot \frac{\det\left[ \varphi_{\ell_j}^{qR}(x_i q^{\frac{1-N}{2}}; q; s_0q^{\frac{1-N}{2}}, s_1q^{\frac{1-N}{2}}, s_2q^{\frac{1-N}{2}}, s_3q^{\frac{1-N}{2}}) \right]_{i, j = 1}^N}{V(x_1q^{\frac{1-N}{2}}, \ldots, x_N q^{\frac{1-N}{2}})}\\
= \frac{\det\left[ q^{\frac{N-1}{2}\ell_j} \cdot \varphi_{\ell_j}^{qR}(x_i q^{\frac{1-N}{2}}; q; s_0q^{\frac{1-N}{2}}, s_1q^{\frac{1-N}{2}}, s_2q^{\frac{1-N}{2}}, s_3q^{\frac{1-N}{2}}) \right]_{i, j = 1}^N}{V(x_1, \ldots, x_N)}\\
=\sum_{\mu\in\Y(N)}{ \sigma_N^{qR}(\la, \mu; q; s_0, s_1, s_2, s_3)
I_{\mu\mid N}^{BC}(x_1, \ldots, x_N; q; s_0) },
\end{multline}
where
\begin{align}
&\sigma^{qR}_N(\la, \mu; q; s_0, s_1, s_2, s_3) = \det\left[ a^{(j)}_{m_k} \right]_{j, k = 1}^N\nonumber\\
&= \det\left[ q^{(N-1)(\ell_j - m_k)} \frac{(q; q)_{\ell_j}}{(q; q)_{m_k}} \cdot
\frac{s_0^{m_k - \ell_j}\prod_{i=1}^3(-s_0s_i q^{m_k+1-N}; q)_{\ell_j - m_k}}{q^{m_k(\ell_j - m_k)}(q, \chi q^{\ell_j + m_k + 1 - 2N}; q)_{\ell_j - m_k}} \right]_{j, k=1}^{N}\nonumber\\
&= \det\left[ \frac{(q; q)_{\ell_j}}{(q; q)_{m_k}} \cdot
\frac{s_0^{m_k - \ell_j}\prod_{i=1}^3(-s_0s_i q^{m_k+1-N}; q)_{\ell_j - m_k}}{q^{(m_k+1-N)(\ell_j - m_k)}(q, \chi q^{\ell_j + m_k + 1 - 2N}; q)_{\ell_j - m_k}} \right]_{j, k=1}^{N}\nonumber\\
&= \prod_{i=1}^N{\frac{(q; q)_{\la_i + N - i}}{(q; q)_{\mu_i + N - i}}} s_0^{|\mu| - |\la|}\nonumber\\
&\ \ \ \times \det\left[ \frac{\prod_{i=1}^3(-s_0s_i q^{\mu_k+1-k}; q)_{\la_j - \mu_k + k - j}}{q^{(\mu_k + 1 - k)(\la_j - \mu_k + k - j)}(q, \chi q^{\la_j + \mu_k + 1 - j - k}; q)_{\la_j - \mu_k + k - j}} \right]_{j, k=1}^{N}.\label{detNN}
\end{align}

\smallskip

\textit{Step 4.}
From the last display above and Lemma $\ref{lem:matrices}$, applied to 
$$f_{j, k}(\ell, m) := \frac{\prod_{i=1}^3(-s_0s_i q^{m+1-k}; q)_{\ell - m + k - j}}{q^{(m + 1 - k)(\ell - m + k - j)}(q, \chi q^{\ell + m + 1 - j - k}; q)_{\ell - m + k - j}},$$
it follows that $\sigma^{qR}_N(\la, \mu; q; s_0, s_1, s_2, s_3) = 0$, unless $\mu\subseteq\la$; this shows the desired $(\ref{eqn:equalitysigma})$.
Moreover the determinant of the $N\times N$ matrix $(\ref{detNN})$ is equal to the determinant of its top-left $\ell(\la)\times\ell(\la)$ submatrix. Therefore
\begin{equation}\label{sigmaNsigma}
\sigma^{qR}_N(\la, \mu; q; s_0, s_1, s_2, s_3)
= \prod_{i=1}^N{\frac{(q; q)_{\la_i+N-i}}{(q; q)_{\mu_i+N-i}}} \cdot \sigma^{qR}(\la, \mu; q; s_0, s_1, s_2, s_3).
\end{equation}
The product in the last display has $N$ terms, but for $i > \ell(\lambda)$ each of the terms are equal to $1$.
It follows that we can take the limit of that product:
$$\lim_{N \rightarrow \infty}{ \prod_{i=1}^N{ \frac{(q; q)_{\lambda_i + N - i}}{(q; q)_{\mu_i + N - i}} } }
= \lim_{N \rightarrow \infty}{ \prod_{i=1}^{\ell(\lambda)}{ \frac{(q; q)_{\lambda_i + N - i}}{(q; q)_{\mu_i + N - i}} } }
= \prod_{i=1}^{\ell(\la)} { \frac{(q; q)_{\infty}}{(q; q)_{\infty}} } = 1.
$$
This proves the desired limit $\sigma^{qR}_N \rightarrow \sigma^{qR}$ in $(\ref{eqn:equalitysigma})$ and finishes the proof.
\end{proof}

\subsection{Formal orthogonality}\label{sec:formalorth}

In this subsection we are working over the ground field $\F:=\C$ and keep the same assumptions on the complex parameters $s_0, s_1, s_2, s_3$ as in the previous subsection, that is,  $s_0\ne0$ and $s_0s_1s_2s_3\notin q^\Z$.  Let $\Phi^{qR}_{\la}$ be the shorthand notation for the $q$-Racah symmetric function $\Phi^{qR}_{\la}(\ccdot; q; s_0, s_1, s_2, s_3)$ (Definition \ref{df:qRfunction}).
We know that $\{ \Phi^{qR}_{\la} \}_{\la\in\Y}$ is a basis of $\Sym$, so we can define the associated \textit{moment functional} $\E : \Sym \rightarrow \C$ by
$$\E(\Phi^{qR}_{\la}) := \delta_{\la, \emptyset}.$$
Attached to this functional is the inner product
\begin{equation}\label{eqn:inner}
(F, G):= \E(FG),\quad F,G\in\Sym.
\end{equation}

Recall that the symbol$(\ccdot; q)_{\la}$ and the partition $\widehat{\la}$ are exhibited in Definition \ref{def:pochlambda}. Below we also use the standard notation \cite{Mac}
$$
n(\la):=\sum_{i=1}^{\ell(\la)}(i-1)\la_i = \sum_{i=1}^{\ell(\la')}\begin{pmatrix} \la'_i \\ 2\end{pmatrix}.
$$

\begin{theorem}\label{thm:formalorth}
For $\la,\mu\in\Y$, we have
\begin{equation}\label{eqn:formalorth}
\left(\Phi^{qR}_{\la}, \Phi^{qR}_{\mu} \right) = \delta_{\la, \mu}h_{\la}^{qR}(q; s_0, s_1, s_2, s_3),
\end{equation}
where
\begin{equation}\label{eqn:hqR}
h_{\la}^{qR}(q; s_0, s_1, s_2, s_3) := q^{n(\la') - n(\la)} \cdot \frac{\prod_{0 \leq i < j \leq 3}(-s_i s_j; q)_{\la}}{(s_0s_1s_2s_3; q)_{\widehat{\la}}} \left( \frac{s_0s_1s_2s_3}{q} \right)^{|\la|}.
\end{equation}
\end{theorem}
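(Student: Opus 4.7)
The plan is to deduce the formal orthogonality at the symmetric-function level from the corresponding orthogonality for the $N$-variate multivariate $q$-Racah polynomials, and pass to the limit $N\to\infty$ via Theorem~\ref{thm:qRpolyslimit}. At the finite-$N$ stage, let $\E_N\colon\Sym(N)\to\C$ be the linear functional dual to the basis $\{\wt\varphi^{qR}_{\nu\mid N}\}_{\nu\in\Y(N)}$, characterized by $\E_N(\wt\varphi^{qR}_{\nu\mid N})=\de_{\nu,\emptyset}$. Expanding the product $\wt\varphi^{qR}_{\la\mid N}\wt\varphi^{qR}_{\mu\mid N}$ via the Cauchy--Binet identity and invoking the univariate formal orthogonality of Lemma~\ref{lemma3.A} with shifted parameters $s_k q^{(1-N)/2}$, one obtains
\[
\E_N\bigl(\wt\varphi^{qR}_{\la\mid N}\wt\varphi^{qR}_{\mu\mid N}\bigr)=\de_{\la,\mu}\,\wt h^{qR}_{\la\mid N},
\]
where $\wt h^{qR}_{\la\mid N}=q^{(N-1)|\la|} h^{qR}_{\la\mid N}(q;s_0 q^{(1-N)/2},\dots,s_3 q^{(1-N)/2})$ (with $h^{qR}_{\la\mid N}$ as in \eqref{eqn:hlambdaqR}) reflects both the parameter shift and the renormalization factor built into $\wt\varphi^{qR}_{\la\mid N}$. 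This is the formal-algebraic analogue of the proof of Proposition~\ref{prop:multivariateqR}(ii) and requires no admissibility assumption on the parameters.

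Next I would pass to the limit. Since $\wt\varphi^{qR}_{\la\mid N}\wt\varphi^{qR}_{\mu\mid N}$ has degree bounded by $d:=|\la|+|\mu|$, only basis elements $\wt\varphi^{qR}_{\nu\mid N}$ with $|\nu|\le d$ appear in its expansion, a finite range independent of $N$. On $\Sym_{\le d}(N)$ (for $N\ge d$) this finite collection is identified via $\iota_{d,N}$ with elements of $\Sym_{\le d}$ that converge, by Theorem~\ref{thm:qRpolyslimit}, to $\{\Phi^{qR}_{\nu}\}_{|\nu|\le d}$. Since the limit is a basis of $\Sym_{\le d}$, for $N$ large the change-of-basis matrices from the Schur basis to $\{\wt\varphi^{qR}_{\nu\mid N}\}$, as well as their inverses, converge entry-wise, so all structure constants of the product converge. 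In particular the coefficient of $\wt\varphi^{qR}_{\emptyset\mid N}=1$ passes to the limit, giving
\[
\E_N\bigl(\wt\varphi^{qR}_{\la\mid N}\wt\varphi^{qR}_{\mu\mid N}\bigr)\longrightarrow \bigl(\Phi^{qR}_{\la},\Phi^{qR}_{\mu}\bigr).
\]
This yields the desired vanishing for $\la\ne\mu$ and reduces everything to evaluating $\lim_{N\to\infty}\wt h^{qR}_{\la\mid N}$.

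The remaining task is to evaluate this limit and identify it with the right-hand side of \eqref{eqn:hqR}. Starting from \eqref{eqn:hlambdaqR} with $s_k\mapsto s_k q^{(1-N)/2}$, the rescalings produce $s_0 s_1 s_2 s_3\mapsto s_0 s_1 s_2 s_3\,q^{2-2N}$ and $s_i s_j\mapsto s_i s_j\,q^{1-N}$ throughout. Using the asymptotic $(aq^{-cN};q)_n\sim(-a)^n q^{-cnN+\binom{n}{2}}$ for the $N$-dependent $q$-Pochhammer factors and the fact that $(q^N;q)_\la\to 1$, one combines the accumulated powers of $q$ with the renormalization factor $q^{(N-1)|\la|}$; the $N$-dependence cancels completely, leaving exactly the prefactor $q^{n(\la')-n(\la)}(s_0 s_1 s_2 s_3/q)^{|\la|}$ together with the $N$-independent combination $\prod_{0\le i<j\le 3}(-s_is_j;q)_\la\big/(s_0 s_1 s_2 s_3;q)_{\widehat\la}$ of \eqref{eqn:hqR}. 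The careful bookkeeping in this last step -- tracking several simultaneous rescalings and recombining $q$-Pochhammer asymptotics into the compact form \eqref{eqn:hqR} -- is the main technical hurdle; the earlier steps are essentially structural, replicating on the formal side the proof of Proposition~\ref{prop:multivariateqR} and invoking the convergence of Theorem~\ref{thm:qRpolyslimit}.
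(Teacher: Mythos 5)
Your overall strategy coincides with the paper's: establish the finite-$N$ identity $\E_N(\wt\varphi^{qR}_{\la\mid N}\wt\varphi^{qR}_{\mu\mid N})=\de_{\la,\mu}\cdot q^{(N-1)|\la|}h^{qR}_{\la\mid N}(q;s_0,\dots,s_3)$, show that $\E_N$ of a product converges to $\E$ of the limiting product using Theorem \ref{thm:qRpolyslimit}, and then evaluate the limit of the norms. (One harmless difference: you derive the finite-$N$ identity purely formally from Lemma \ref{lemma3.A} via an Andr\'eief-type expansion, with no admissibility assumption, whereas the paper first proves it for admissible parameters, where an honest orthogonality measure exists, and then extends to generic complex parameters by rationality of both sides; either route works.)

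However, the final step as you describe it contains a genuine bookkeeping error. Formula \eqref{eqn:hlambdaqR} \emph{already} incorporates the substitution $s_k\mapsto s_kq^{(1-N)/2}$: by its definition in Proposition \ref{prop:multivariateqR}(ii), $h^{qR}_{\la\mid N}(q;s_0,\dots,s_3)$ is the telescoping ratio of univariate norms \eqref{eqn:normqR} evaluated at the \emph{shifted} parameters, and the shift cancels in the telescoping --- that is exactly why the closed form \eqref{eqn:hlambdaqR} is written in the unshifted $s_k$. Your quantity $\wt h^{qR}_{\la\mid N}=q^{(N-1)|\la|}h^{qR}_{\la\mid N}(q;s_0q^{(1-N)/2},\dots,s_3q^{(1-N)/2})$, with $h^{qR}_{\la\mid N}$ read off from \eqref{eqn:hlambdaqR}, applies the rescaling a second time. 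If one carries out the asymptotics after this double substitution (so that $-s_is_j\mapsto -s_is_jq^{1-N}$ and $s_0s_1s_2s_3\mapsto s_0s_1s_2s_3q^{2-2N}$ in \eqref{eqn:hlambdaqR}), every Pochhammer factor becomes $N$-dependent, all the parameters drop out in the limit, and the answer collapses to $q^{\,n(\la)-n(\la')-|\la|}$ --- independent of $s_0,\dots,s_3$ and therefore not equal to the right-hand side of \eqref{eqn:hqR}. The correct computation (the paper's Step 4) makes no further substitution: in $q^{(N-1)|\la|}h^{qR}_{\la\mid N}(q;s_0,\dots,s_3)$ with \eqref{eqn:hlambdaqR} taken as is, the only $N$-dependent factors are $(q^N;q)_\la\to1$ and
$$
(s_0s_1s_2s_3q^{-N};q)_\la=(-1)^{|\la|}q^{(1-N)|\la|}\left(\frac{s_0s_1s_2s_3}{q}\right)^{|\la|}q^{\,n(\la')-n(\la)}\,(1+o(1)),
$$
and combining the latter with the prefactor $q^{(N-1)|\la|}$ and the sign $(-1)^{|\la|}$ yields exactly \eqref{eqn:hqR}. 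Once this double rescaling is removed, your argument matches the paper's proof.
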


\begin{proof}
The key idea is to approximate the functional $\E : \Sym \rightarrow \C$ above by functionals $\E_N : \Sym(N) \rightarrow \C$, as $N$ tends to infinity.

\smallskip

\textit{Step 1.}
Define the $N$-variate polynomial $\widetilde\varphi_{\la | N}^{qR}(\ccdot; q; s_0, s_1, s_2, s_3)\in \Sym(N)$ by
\begin{multline}\label{phiNRacah}
\widetilde\varphi_{\la | N}^{qR}(x_1, \ldots, x_N; q; s_0, s_1, s_2, s_3) :=\\
q^{\frac{N-1}{2}|\la|}\cdot\varphi_{\la | N}^{qR}
(x_1q^{\frac{1-N}{2}}, \dots, x_Nq^{\frac{1-N}{2}}; q; s_0q^{\frac{1-N}{2}}, s_1q^{\frac{1-N}{2}}, s_2 q^{\frac{1-N}{2}}, s_3q^{\frac{1-N}{2}}),
\end{multline}
for any $\la\in\Y(N)$.
We can abbreviate $\widetilde\varphi_{\la | N}^{qR}(\ccdot; q; s_0, s_1, s_2, s_3)$ by $\widetilde\varphi^{qR}_{\la | N}$.
The set $\{ \widetilde\varphi_{\la | N}^{qR} \}_{\la\in\Y(N)}$ is a basis of $\Sym(N)$.
Then we can define $\E_N : \Sym \rightarrow \C$ by
$$\E_N(\widetilde\varphi_{\la | N}^{qR}) = \delta_{\la, \emptyset}, \textrm{ for all } \lambda\in\Y(N).$$
Therefore, for any $\la\in\Y$, we have
\begin{equation}\label{eqn:limitEqR}
\E(\Phi^{qR}_{\la}) = \lim_{N \rightarrow \infty} \E_N(\widetilde\varphi_{\la | N}^{qR}),
\end{equation}
since both sides are equal to $\delta_{\la, \emptyset}$.
Moreover, Theorem $\ref{thm:qRpolyslimit}$ shows, for any $\la\in\Y$,
\begin{equation}\label{eqn:limitqR}
\Phi^{qR}_{\la} = \lim_{N \rightarrow \infty}{\widetilde\varphi_{\la | N}^{qR}}.
\end{equation}

\textit{Step 2.}
Let $\la, \mu$ be any two partitions; also let $N$ range over integers larger than $\max\{\ell(\la), \ell(\mu)\}$.
The generic argument \cite[proof of Theorem 4.1, (c)]{Ols-2017} shows that $(\ref{eqn:limitqR})$ implies that the coefficients in the expansions of $\widetilde\varphi_{\la | N}^{qR} \widetilde\varphi_{\mu | N}^{qR}$ in the basis $\{ \widetilde\varphi_{\nu | N}^{qR} \}_{\nu\in\Y(N)}$ converge to the corresponding coefficients in the expansion of $\Phi^{qR}_{\la}\Phi^{qR}_{\mu}$ in the basis $\{ \Phi^{qR}_{\nu} \}_{\nu\in\Y}$.
Consequently, $(\ref{eqn:limitEqR})$ can be generalized to
\begin{equation}\label{eqn:keylimit}
\E(\Phi^{qR}_{\la}\Phi^{qR}_{\mu}) = \lim_{N \rightarrow \infty}
{\E_N(\widetilde\varphi_{\la | N}^{qR}\widetilde\varphi_{\mu | N}^{qR})},
\end{equation}
for any partitions $\la, \mu$.

\smallskip

\textit{Step 3.}
In the case when $(s_0, s_1, s_2, s_3)$ is an admissible quadruple in the sense of Definition \ref{positivity}, it follows from Proposition \ref{prop:multivariateqR} and the definition \eqref{phiNRacah} that
\begin{equation}\label{eqn:keyequality}
\E_N(\widetilde\varphi_{\la | N}^{qR} \widetilde\varphi_{\mu | N}^{qR}) =
\delta_{\la, \mu}\cdot q^{\frac{N-1}{2}(|\la| + |\mu|)} h^{qR}_{\la\mid N}(q; s_0, s_1, s_2, s_3).
\end{equation}
Since the both sides are rational functions $s_0, s_1, s_2, s_3$, the equality holds even when $(s_0, s_1, s_2 ,s_3)$ is not admissible, but a generic quadruple of complex numbers.

Now $(\ref{eqn:keylimit})$ and $(\ref{eqn:keyequality})$ already show the desired orthogonality relation $(\ref{eqn:formalorth})$ when $\la \neq \mu$.
It remains to prove $(\ref{eqn:formalorth})$ when $\la = \mu$.

\smallskip

\textit{Step 4.}
From $(\ref{eqn:keylimit})$ and $(\ref{eqn:keyequality})$, we have
\begin{equation}\label{step4}
\left(\Phi^{qR}_{\la}, \Phi^{qR}_{\la} \right) = \mathbb{E}( (\Phi_{\la}^{qR})^2 ) = \lim_{N \rightarrow \infty}{\E_N((\widetilde\varphi_{\la\mid N}^{qR})^2)}
= \lim_{N \rightarrow \infty}{q^{(N-1)|\la|} h^{qR}_{\la\mid N}(q; s_0, s_1, s_2, s_3) }.
\end{equation}
The explicit expression for $h^{qR}_{\la\mid N}(q; s_0, s_1, s_2, s_3)$ in $(\ref{eqn:hlambdaqR})$ reads
$$
h^{qR}_{\la\mid N}(q; s_0, s_1, s_2, s_3) = (-1)^{|\la|} \cdot 
\frac{(\chi q^{-N}, q^{N}; q)_{\la}\prod_{0 \leq i < j \leq 3}(-s_i s_j; q)_{\la}}
{(\chi; q)_{\widehat{\la}}},
$$
where we denoted $\chi := s_0s_1s_2s_3$.
We need to consider the asymptotics of this formula as $N$ tends to infinity.
Clearly, $\lim_{N \rightarrow \infty}(q^N; q)_{\lambda} = 1$.
The only other term depending on $N$ is
\begin{equation}\label{chipoch}
(\chi q^{-N}; q)_{\lambda} = \prod_{i = 1}^{\ell(\lambda)}{(\chi q^{-N+1-i}; q)_{\lambda_i}}.
\end{equation}
The $i$-th term in the product above is
\begin{equation}\label{eqn:ithterm}
\begin{aligned}
(\chi q^{-N+1-i}; q)_{\la_i}
&= (1 - \chi q^{-N+1-i})\cdots (1 - \chi q^{-N+\la_i-i})\\
&= (-1)^{\la_i}  (\chi q^{-N+1-i})^{\la_i} q^{{\la_i \choose 2}} (1 + O(q^N))\\
&= (-1)^{\la_i}  q^{(1-N)\la_i} \left( \frac{\chi}{q} \right)^{\la_i} q^{-(i-1)\la_i} q^{{\la_i \choose 2}} (1 + O(q^N)).
\end{aligned}
\end{equation}
Since $\sum_{i=1}^N{\la_i} = |\la|$, $\sum_{i=1}^N{(i-1)\la_i} = n(\la)$ and $\sum_{i=1}^N{ {\la_i \choose 2} } = n(\la')$, the asymptotic equality $(\ref{eqn:ithterm})$ shows
$$
(\ref{chipoch}) = (-1)^{|\lambda|}q^{(1-N)|\lambda|}(\chi/q)^{|\lambda|}q^{-n(\lambda)+n(\lambda')}(1+o(1)).
$$
It follows that the limit in the right-hand side of $(\ref{step4})$ exists and is given by the right-hand side of \eqref{eqn:hqR}.
\end{proof}

\begin{corollary}
Suppose that the parameters satisfy the additional constraints $s_0s_1s_2s_3 \neq 0$ and $-s_is_j\notin q^{\Z}$, for all pairs $1\le i<j\le3$. Then the inner product \eqref{eqn:inner} on $\Sym$ is nondegenerate, and $\{\Phi^{qR}_\la: \la\in\Y\}$ is an orthogonal basis with respect to it.
\end{corollary}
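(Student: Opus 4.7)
The plan is to deduce the corollary from Theorem \ref{thm:formalorth} by checking that each squared norm $h^{qR}_\la$ is nonzero under the given hypotheses.

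First, I would note that $\{\Phi^{qR}_\la\}_{\la\in\Y}$ is already an inhomogeneous basis of $\Sym$: from the triangular expansion $\Phi^{qR}_\la = S_\la + \text{lower degree terms}$, recorded right after Definition \ref{df:qRfunction} via Proposition \ref{prop2.D}(iii), the transition matrix to the Schur basis is unitriangular. Then Theorem \ref{thm:formalorth} gives
$(\Phi^{qR}_\la,\Phi^{qR}_\mu) = \delta_{\la,\mu}\,h^{qR}_\la$,
so the family is automatically orthogonal with respect to the form $(F,G)=\E(FG)$. Consequently, the form is nondegenerate, and $\{\Phi^{qR}_\la\}$ is an orthogonal basis with respect to it, if and only if $h^{qR}_\la\ne 0$ for every $\la\in\Y$.

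Next, I would inspect the explicit formula \eqref{eqn:hqR} factor by factor. The prefactor $q^{n(\la')-n(\la)}$ is nonzero since $q\in(0,1)$. The power $(\chi/q)^{|\la|}$, with $\chi := s_0s_1s_2s_3$, is nonzero because $\chi\ne 0$ by hypothesis. The denominator $(\chi;q)_{\widehat{\la}}$ is a finite product of terms of the form $1-\chi q^k$ with $k\in\Z$, and all of these are nonzero thanks to the standing assumption $\chi\notin q^\Z$ imposed at the start of Section 3 (which is already needed for the $\Phi^{qR}_\la$ to be defined). Finally, each factor $(-s_is_j;q)_\la$ of the numerator $\prod_{0\le i<j\le 3}(-s_is_j;q)_\la$ is a finite product of basic factors $1-(-s_is_j)q^m$ with $m\in\Z$, so the hypothesis $-s_is_j\notin q^\Z$ guarantees that the factor is nonzero for every $\la$.

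The only point requiring care, and what I expect to be the main obstacle, is that the corollary's hypothesis literally restricts the non-vanishing condition to pairs with $1\le i<j\le 3$, whereas the numerator in \eqref{eqn:hqR} ranges over all six pairs with $0\le i<j\le 3$. I would address this by invoking the $S_4$-symmetry of the $q$-Racah symmetric functions in the parameters $(s_0,s_1,s_2,s_3)$ (noted after \eqref{eqn:qRpoly}): the statement being proved is symmetric under permutations of the $s_k$, so the hypothesis should be read charitably as applying after any such permutation, covering all six pairs. Once this is in place, the four non-vanishing conclusions combine to give $h^{qR}_\la\ne 0$ for every $\la\in\Y$, which completes the proof.
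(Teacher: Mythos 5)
Your proof is correct and follows the same route as the paper: both reduce the corollary to checking that $h^{qR}_\la\ne 0$ for every $\la\in\Y$ directly from formula \eqref{eqn:hqR}, factor by factor. Your observation about the range of pairs is a genuine catch --- the printed hypothesis ($1\le i<j\le 3$) does not cover the factors $(-s_0s_j;q)_\la$ appearing in \eqref{eqn:hqR} and is evidently a typo for $0\le i<j\le 3$ --- but note that your symmetry patch is not actually a deduction (the three-pair condition singles out the index $0$ and is not $S_4$-invariant, so permuting the parameters changes the hypothesis rather than extending it); the correct reading is simply that the intended constraint runs over all six pairs.
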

\begin{proof} Indeed, we only need to check that these constraints guarantee that $(\Phi^{qR}_{\la}, \Phi^{qR}_{\la})\ne0$ for all $\la\in\Y$, but this is immediate from \eqref{eqn:hqR}.
\end{proof}

\subsection{Orthogonality measures for admissible parameters $(s_0,s_1,s_2,s_3)$}
Given two integers $L\ge R$, we consider the doubly infinite grid
$$
\De_{L, R} := \{ -\zeta^{-1}q^{-L}, -\zeta^{-1}q^{1-L}, -\zeta^{-1}q^{2-L}, \ldots \} \sqcup \{ \ldots, \zeta q^{R+2}, \zeta q^{R+1}, \zeta q^R \}
$$
on the real line. Let $\Omega^{qR}$ denote the set of all point configurations on $\Delta_{L, R}$.
We equip $\Omega^{qR}$ with the topology coming from its natural identification with $\{ 0, 1 \}^{\De_{L, R}}$, so that $\Omega^{qR}$ is a compact, metrizable space.

\begin{theorem}\label{thm:qRorth}
Let $(s_0, s_1, s_2, s_3)$ be admissible in the sense of Definition \ref{positivity}; in particular, $s_2 = -\zeta q^{R}$, $s_3 = \zeta^{-1}q^{-L}$, for some $\zeta>0$ and integers $L \geq R$.

There exists a unique probability measure $M^{qR}_{q, \zeta, s_0, s_1, s_2, s_3}$ on $\Omega^{qR}$ such that
\begin{equation}\label{eqn:emptydelta0}
\langle M^{qR}_{q, \zeta, s_0, s_1, s_2, s_3} , \Phi^{qR}_{\lambda}  \rangle
= \delta_{\lambda, \emptyset}, \quad \la\in\Y,
\end{equation}
where we denoted $\Phi_{\lambda}^{qR} := \Phi_{\lambda}^{qR}(\ccdot; q; s_0, s_1, s_2, s_3)$.
\end{theorem}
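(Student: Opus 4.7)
The plan is to realize $M^{qR}$ as a weak limit of (rescaled) pushforwards of the finite-level orthogonality measures $M^{qR}_N$ of Proposition \ref{prop:multivariateqR}. For each odd $N$, $M^{qR}_N$ lives on $N$-point configurations in the grid $\wt\De_{L+(N-1)/2,\,R-(N-1)/2}$ with points $y_m=\z q^m-\z^{-1}q^{-m}$. Under the rescaling $y_m\mapsto q^{(N-1)/2}y_m$ built into \eqref{phiNRacah}, the points indexed by $m$ near $R-(N-1)/2$ converge to the right branch $\{\z q^{R+k}\}_{k\ge 0}$ of $\De_{L,R}$, the points indexed by $m$ near $L+(N-1)/2$ converge to the left branch $\{-\z^{-1}q^{k-L}\}_{k\ge 0}$, and the remaining ``middle'' points cluster near $0$ at rate $O(q^{N/2})$. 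I use this correspondence to define a map $\Psi_N\colon\wt\Om^{qR}_N\to\Om^{qR}$ by sending each particle to the $\De_{L,R}$-point its rescaled position is closest to (discarding those middle particles whose rescaled positions sit closer to $0$ than to any point of $\De_{L,R}$), and set $\wt M^{qR}_N:=(\Psi_N)_* M^{qR}_N$.

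Since $\Om^{qR}$ is a closed subset of the compact metrizable space $\{0,1\}^{\De_{L,R}}$, the sequence $\{\wt M^{qR}_N\}$ is automatically tight; I extract any weak subsequential limit and call it $M^{qR}$. To verify \eqref{eqn:emptydelta0}, I combine two inputs: first, $\E_N(\widetilde\varphi^{qR}_{\la|N})=\de_{\la,\emptyset}$, by the definition of $\E_N$ and Proposition \ref{prop:multivariateqR}; second, $\widetilde\varphi^{qR}_{\la|N}\to\Phi^{qR}_\la$ in the sense of Definition \ref{def2.convergence}, by Theorem \ref{thm:qRpolyslimit}. Using the compatibility of $\Psi_N$ with the rescaled evaluation, together with a dominated-convergence argument valid because $\De_{L,R}$ is bounded and $\wt{\Phi^{qR}_\la}$ is a bounded continuous function on the compact space $\Om^{qR}$, I deduce $\int_{\Om^{qR}}\wt{\Phi^{qR}_\la}\,dM^{qR}=\lim_{N\to\infty}\E_N(\widetilde\varphi^{qR}_{\la|N})=\de_{\la,\emptyset}$.

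For uniqueness, note that $\De_{L,R}\cup\{0\}$ is compact in $\R$, so by the Stone--Weierstrass theorem the subalgebra of $C(\Om^{qR})$ generated by the power sums $\wt p_k(X):=\sum_{x\in X}x^k$, $k\ge 1$, is dense (these functions are continuous and separate configurations on $\De_{L,R}$). Hence the expectations $\langle M,\wt F\rangle$ for $F\in\Sym$ determine $M$, and since $\{\Phi^{qR}_\la\}_{\la\in\Y}$ is a basis of $\Sym$, so do the values in \eqref{eqn:emptydelta0}. This uniqueness further implies that the full sequence $\wt M^{qR}_N$ converges, not just a subsequence.

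The principal technical obstacle is the moment-matching step of the second paragraph. I must show that the polynomial $\widetilde\varphi^{qR}_{\la|N}$, evaluated on an $N$-point configuration with most coordinates of size $O(q^{N/2})$, agrees up to vanishing error with $\wt{\Phi^{qR}_\la}$ evaluated on the image configuration $\Psi_N(X)\in\Om^{qR}$. The rescaling prefactor $q^{(N-1)|\la|/2}$ in \eqref{phiNRacah} was arranged precisely to make this asymptotic match work; the required uniform-in-$N$ estimates ultimately come from the fact that $\Phi^{qR}_\la$ has finite degree $|\la|$, so that only finitely many coordinates of $X$ contribute nontrivially to any given monomial, the remaining ones producing negligible corrections as $N\to\infty$.
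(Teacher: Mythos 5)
Your proposal is correct and follows essentially the same route as the paper: push the finite-level measures $M^{qR}_N$ forward onto $\Om^{qR}$ via an approximation of the rescaled finite grid by $\De_{L,R}$, match moments using Proposition \ref{prop:multivariateqR} together with Theorem \ref{thm:qRpolyslimit}, and obtain uniqueness from compactness of $\Om^{qR}$ and Stone--Weierstrass. The only cosmetic differences are that the paper uses an explicit order-preserving injection $f_N$ of the rescaled grid into $\De_{L,R}$ (so no particles are discarded and no nearest-point ambiguities arise) and quantifies the moment-matching error cleanly as $N\cdot O(q^{N/2})=o(1)$ per power sum, rather than your nearest-point map with discarded middle particles.
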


Before proceeding to the proof, let us explain the simple idea behind it.
Below, the parameter $N$ ranges over the odd integers $1, 3, 5, \ldots$.
Recall the notation $y_m:=\zeta q^m - \zeta^{-1} q^{-m}$ introduced in Lemma \ref{lemma3.A}. In this notation, the points of the finite grid $\wt\De_{L + \frac{N-1}{2}, R - \frac{N-1}{2}}$ are
$$
y_{L + \frac{N-1}2} < y_{L + \frac{N-3}2} <\dots <y_{R-\frac{N-3}2}<y_{R -\frac{N-1}2}.
$$
Let us set
$$
y_m^{(N)}:=y_m q^{\frac{N-1}2}, \quad m\in\Z.
$$
In this notation, the points of the rescaled grid 
$q^{\frac{N-1}{2}}\widetilde{\De}_{L + \frac{N-1}{2}, R - \frac{N-1}{2}}$
are
$$
y^{(N)}_{L + \frac{N-1}2} < y^{(N)}_{L + \frac{N-3}2} <\dots <y^{(N)}_{R-\frac{N-3}2}<y^{(N)}_{R -\frac{N-1}2}.
$$

For each odd integer $N=1,3,5,\dots$, we define an order-preserving embedding $f_N$ of  the rescaled grid $q^{\frac{N-1}2}\widetilde{\De}_{L + \frac{N-1}{2}, R - \frac{N-1}{2}}$  into the infinite grid $\De_{L, R}$. As $N$ goes to infinity, the image of the grid $q^{\frac{N-1}2}\widetilde{\De}_{L + \frac{N-1}{2}, R - \frac{N-1}{2}}$ grows and in the limit covers the whole infinite grid $\De_{L,R}$. 

Using these embeddings, we can put all the probability measures $M_N^{qR}(\ccdot; q; \zeta; s_0, s_1, s_2, s_3)$ into the common space $\Omega^{qR}$.  Then, with the aid of Theorem \ref{thm:qRpolyslimit}, we show that the resulting measures weakly converge, as $N\to\infty$, to  the (unique) measure on $\Om^{qR}$ satisfying \eqref{eqn:emptydelta0}. 
\smallskip

Next, we need a preparation. The embedding $f_N : q^{\frac{N-1}2}\widetilde{\De}_{L+\frac{N-1}{2}, R-\frac{N-1}{2}} \rightarrow \De_{L, R}$ mentioned above is defined by
\begin{equation}\label{eqn:fN}
f_N( y_m^{(N)} ) = \left \{
  \begin{aligned}
    &-\zeta^{-1} q^{-m + \frac{N-1}{2}} && \text{if } m \geq 0;\\
    &\zeta q^{m + \frac{N-1}{2}} && \text{if } m < 0.
\end{aligned} \right.
\end{equation}
Note that for any fixed $n \geq 1$ and for $N$ large enough, $f_N$ takes the $n$th leftmost point of $q^{\frac{N-1}2}\widetilde{\De}_{L+\frac{N-1}{2}, R-\frac{N-1}{2}}$ to the $n$th leftmost point of $\De_{L,R}$, and likewise for rightmost points. 

The following lemma shows that, as $N$ gets large, the finite grid $q^{\frac{N-1}2}\widetilde{\De}_{L+\frac{N-1}{2}, R-\frac{N-1}{2}}$ becomes very close to the infinite grid $\De_{L,R}$.

\begin{lemma}\label{lemma3.C1}
The following bound holds uniformly on $y\in q^{\frac{N-1}2}\widetilde{\De}_{L+\frac{N-1}{2}, R-\frac{N-1}{2}}${\rm:}
$$
y=f_N(y)+O(q^{N/2}).
$$
\end{lemma}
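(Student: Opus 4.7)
The plan is to compute $y - f_N(y)$ directly from the definitions and show that the resulting expression is bounded, in absolute value, by a constant multiple of $q^{N/2}$, with the constant independent of the point $y$ chosen from the finite grid. Since $y$ has the form $y_m^{(N)} = q^{\frac{N-1}{2}}(\zeta q^m - \zeta^{-1}q^{-m})$ for some integer $m$ with $R - \tfrac{N-1}{2} \le m \le L + \tfrac{N-1}{2}$, and since the embedding $f_N$ is defined by a case distinction between $m \ge 0$ and $m < 0$, the natural approach is to split into those two cases and treat each separately.

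First I would handle the case $m \ge 0$. Plugging into \eqref{eqn:fN}, the difference is
\[
y_m^{(N)} - f_N(y_m^{(N)}) = \zeta q^{m+\frac{N-1}{2}} - \zeta^{-1}q^{-m+\frac{N-1}{2}} + \zeta^{-1}q^{-m+\frac{N-1}{2}} = \zeta q^{m+\frac{N-1}{2}}.
\]
Because $m \ge 0$, this is at most $\zeta q^{\frac{N-1}{2}}$, which is $O(q^{N/2})$ with a constant depending only on $\zeta$ and $q$. Next I would handle the case $m < 0$. Here \eqref{eqn:fN} gives
\[
y_m^{(N)} - f_N(y_m^{(N)}) = \zeta q^{m+\frac{N-1}{2}} - \zeta^{-1}q^{-m+\frac{N-1}{2}} - \zeta q^{m+\frac{N-1}{2}} = -\zeta^{-1}q^{-m+\frac{N-1}{2}}.
\]
Since $m < 0$ forces $-m \ge 1$, the exponent is at least $\tfrac{N+1}{2}$, and so the absolute value is bounded by $\zeta^{-1}q^{\frac{N+1}{2}} = O(q^{N/2})$.

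The two cases together give the claimed uniform estimate. The only subtlety worth flagging is that in each case the surviving term involves the ``opposite half'' of the grid point: for $m \ge 0$ (the points of the rescaled grid lying on the negative half-line, near $-\zeta^{-1}q^{-m+\frac{N-1}{2}}$) the residual comes from the small positive contribution $\zeta q^{m+\frac{N-1}{2}}$, and symmetrically for $m < 0$. In both cases the exponent of $q$ is at least $\tfrac{N-1}{2}$, and the bound is uniform in $m$ since the range of $m$ plays no role in the estimate — only the sign of $m$ matters. There is no real obstacle here; the statement is essentially a bookkeeping check that the embedding $f_N$ correctly matches the dominant term in $y_m^{(N)}$ and discards only a term of size $O(q^{N/2})$.
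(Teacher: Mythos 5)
Your proposal is correct and coincides with the paper's own proof: both compute the difference $y_m^{(N)}-f_N(y_m^{(N)})$ case by case, finding $\zeta q^{m+\frac{N-1}{2}}$ for $m\ge 0$ and $-\zeta^{-1}q^{-m+\frac{N-1}{2}}$ for $m<0$, and observe that each is $O(q^{N/2})$ uniformly in $m$. Nothing is missing.
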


\begin{proof}
We have $y=y^{(N)}_m$, i.e.,
$$
y = y^{(N)}_m=\zeta q^{m+\frac{N-1}2}-\zeta^{-1} q^{-m+\frac{N-1}2},
$$
for some $m\in\Z$. Comparing with \eqref{eqn:fN}, we see that 
\begin{equation*}
y^{(N)}_m-f_N( y_m^{(N)} ) = \begin{cases} \zeta q^{m + \frac{N-1}{2}}, & m\ge0\\
-\zeta^{-1} q^{-m + \frac{N-1}{2}}, & m<0.
\end{cases}
\end{equation*}
Obviously, in both cases the difference is  $O(q^{N/2})$. 
\end{proof}

Note that near the ends, a stronger estimate holds (the difference is of order $O(q^N)$), but we do not use this fact. The next lemma is a corollary of the previous one.

\begin{lemma}\label{lemma3.C2}
Let $p_k\in\Sym$ denote the $k$th power sum, $k=1,2,\dots$\,. For fixed $k$ and an arbitrary $N$-point configuration $X_N$ in $q^{\frac{N-1}2}\widetilde{\De}_{L+\frac{N-1}{2}, R-\frac{N-1}{2}}$, the following bound holds as $N\to\infty${\rm:}
$$
p_k(X_N)=p_k(f_N(X_N))+o(1),
$$ 
uniformly on $X_N$. 
\end{lemma}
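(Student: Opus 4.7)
The plan is to pass from the uniform pointwise estimate of Lemma \ref{lemma3.C1} to the sum estimate on $p_k$ by a direct telescoping bound, exploiting the fact that both the rescaled finite grid and the infinite grid are uniformly bounded.

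First I would observe that every point of the rescaled grid $q^{\frac{N-1}2}\widetilde{\De}_{L+\frac{N-1}{2}, R-\frac{N-1}{2}}$ and every point of $\De_{L,R}$ lies in the fixed bounded interval $[-\zeta^{-1}q^{-L},\,\zeta q^{R}]$. Indeed, for $y_m^{(N)} = \zeta q^{m+\frac{N-1}{2}} - \zeta^{-1}q^{-m+\frac{N-1}{2}}$ with $R - \tfrac{N-1}{2} \le m \le L + \tfrac{N-1}{2}$, the two summands are of opposite sign and their absolute values are bounded by $\zeta q^{R}$ and $\zeta^{-1}q^{-L}$ respectively; a similar check applies to the points of $\De_{L,R}$, and to $f_N(y_m^{(N)})$ by its very definition \eqref{eqn:fN}. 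Let $C := \max(\zeta^{-1}q^{-L},\zeta q^R)$, so that $|y|,|f_N(y)|\le C$ uniformly.

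Next I would apply the factorization
$$
y^k - f_N(y)^k = (y - f_N(y))\sum_{j=0}^{k-1} y^{j}\,f_N(y)^{k-1-j}
$$
to every point $y$ of $X_N$. The second factor is bounded in absolute value by $kC^{k-1}$, and Lemma \ref{lemma3.C1} gives $|y - f_N(y)| \le C' q^{N/2}$ for a constant $C'$ independent of $N$ and $y$. Hence, uniformly in $y$,
$$
|y^k - f_N(y)^k| \;\le\; k\,C^{k-1}C'\,q^{N/2}.
$$

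Finally I would sum over the $N$ points of $X_N$. Since $|X_N|=N$ and $f_N$ is injective, we get
$$
|p_k(X_N) - p_k(f_N(X_N))| \;\le\; \sum_{y\in X_N} |y^k - f_N(y)^k| \;\le\; N\cdot k\,C^{k-1}C'\,q^{N/2}.
$$
Because $0<q<1$, we have $N q^{N/2}\to 0$ as $N\to\infty$, and the bound is uniform in the choice of $X_N$. This proves $p_k(X_N) = p_k(f_N(X_N)) + o(1)$. No real obstacle is expected: the estimate is a routine combination of Lemma \ref{lemma3.C1}, boundedness of the support, and the elementary factorization of $y^k - f_N(y)^k$. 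The only thing to double-check is the uniform boundedness of both grids, which follows immediately from the explicit form of $y_m^{(N)}$ and $f_N$.
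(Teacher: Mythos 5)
Your proof is correct and follows essentially the same route as the paper: uniform boundedness of both grids combined with the pointwise estimate of Lemma \ref{lemma3.C1} gives $y^k=(f_N(y))^k+O(q^{N/2})$ uniformly, and summing over the $N$ points yields an $O(Nq^{N/2})=o(1)$ error. The explicit telescoping factorization and the identification of the bounding interval $[-\zeta^{-1}q^{-L},\zeta q^{R}]$ are just details the paper leaves implicit.
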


\begin{proof}
Note that all the grids $q^{\frac{N-1}{2}}\widetilde{\De}_{L+\frac{N-1}{2}, R-\frac{N-1}{2}}$ and $\De_{L,R}$ are contained in some compact interval $[-C,C]\subset\R$. From this fact and Lemma \ref{lemma3.C1}, we obtain that for $y\in q^{\frac{N-1}2}\widetilde{\De}_{L+\frac{N-1}{2}, R-\frac{N-1}{2}}$, one has
$$
y^k=(f_N(y))^k+O(q^{N/2}), \quad N\to\infty,
$$
uniformly on $y$. Because $N q^{N/2}\to0$ we are done. 
\end{proof}

We need the following refinement of the previous lemma.

\begin{lemma}\label{lemma3.C3}
Let\/ $\Phi_N\in\Sym(N)$, $N=1,3,5,\dots$,  be a sequence of elements which converge to an element\/ $\Phi\in\Sym$ in the sense of Definition \ref{def2.convergence}. For an arbitrary $N$-point configuration $X_N$ in $q^{\frac{N-1}2}\widetilde{\De}_{L+\frac{N-1}{2}, R-\frac{N-1}{2}}$, the following bound holds as $N\to\infty${\rm:}
$$
\Phi_N(X_N)=\Phi(f_N(X_N))+o(1),
$$ 
uniformly on $X_N$. 
\end{lemma}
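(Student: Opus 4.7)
The plan is to reduce the statement, via the power-sum basis of $\Sym$, to the already-proved convergence of individual power sums in Lemma~\ref{lemma3.C2}. By Definition~\ref{def2.convergence}, the degrees $\deg \Phi_N$ are uniformly bounded by some $d$, and $\iota_{d,N}(\Phi_N) \to \Phi$ in the finite-dimensional space $\Sym_{\le d}$. Expanding both sides in the power-sum basis $\{p_\la : |\la|\le d\}$ of $\Sym_{\le d}$ gives coefficientwise convergence $c^{(N)}_\la \to c_\la$, where $\iota_{d,N}(\Phi_N) = \sum_\la c^{(N)}_\la p_\la$ and $\Phi = \sum_\la c_\la p_\la$. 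Since the canonical projection $\Sym\to\Sym(N)$ is evaluation at $x_{N+1}=x_{N+2}=\dots=0$, evaluating these expansions yields
\[
\Phi_N(X_N) = \sum_{\la} c^{(N)}_\la p_\la(X_N), \qquad \Phi(f_N(X_N)) = \sum_{\la} c_\la p_\la(f_N(X_N)).
\]

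It therefore suffices to prove two things: that $p_\la(X_N) - p_\la(f_N(X_N)) = o(1)$ uniformly in $X_N$ for every fixed $\la$ with $|\la|\le d$, and that the quantities $p_\la(X_N)$, $p_\la(f_N(X_N))$ are uniformly bounded in $N$ and $X_N$. The latter is the key technical point. I would establish it by bounding the full grid sum $\sum_{y}|y|^k$ over $y\in q^{(N-1)/2}\widetilde\De_{L+(N-1)/2,R-(N-1)/2}$ via the crude estimate $|y_m|^k \le 2^{k-1}(\zeta^k q^{km}+\zeta^{-k}q^{-km})$; after multiplying by $q^{k(N-1)/2}$ and summing over $m\in[R-\tfrac{N-1}{2}, L+\tfrac{N-1}{2}]$, both resulting geometric series telescope to expressions independent of $N$ (namely, of orders $q^{kR}/(1-q^k)$ and $q^{-kL}/(1-q^k)$). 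This majorizes $|p_k(X_N)|$ regardless of the choice of $X_N$, and the analogous bound for $f_N(X_N)\subset \De_{L,R}$ is immediate from convergence of the same geometric series.

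With the uniform bound in hand, the standard telescoping identity $\prod_i a_i - \prod_i b_i = \sum_i a_1\cdots a_{i-1}(a_i-b_i)b_{i+1}\cdots b_\ell$, applied with $a_i = p_{\la_i}(X_N)$ and $b_i = p_{\la_i}(f_N(X_N))$, combined with Lemma~\ref{lemma3.C2}, immediately yields $p_\la(X_N) - p_\la(f_N(X_N)) = o(1)$ uniformly in $X_N$. The conclusion then follows by splitting
\[
\Phi_N(X_N) - \Phi(f_N(X_N)) = \sum_\la (c^{(N)}_\la-c_\la)\, p_\la(X_N) + \sum_\la c_\la\,\bigl(p_\la(X_N)-p_\la(f_N(X_N))\bigr):
\]
the first sum is $o(1)$ because it runs over finitely many $\la$, the coefficients converge, and the $p_\la(X_N)$ are uniformly bounded; the second is $o(1)$ uniformly by what we just showed. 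The main obstacle in the plan is establishing uniform boundedness of the power sums; once that is available, the reduction to Lemma~\ref{lemma3.C2} is routine.
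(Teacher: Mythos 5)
Your proof is correct and follows essentially the same route as the paper's (much terser) argument: reduce to monomials in power sums via the power-sum expansion with converging coefficients, establish uniform boundedness of $p_k$ on the relevant configurations, and combine Lemma \ref{lemma3.C2} with a telescoping product estimate. You simply make explicit the details the paper compresses into three sentences, including the geometric-series bound for uniform boundedness on the rescaled finite grids, which the paper obtains instead from the observation that all grids lie in a fixed compact interval and that $\sum_{y\in\De_{L,R}}|y|^k<\infty$.
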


We have used the following convention in the statement of the lemma above.
If $F\in\Sym$ and $X$ is an $N$-point configuration, then $F(X)$ is the evaluation of $F$ in the infinite sequence $X, 0, 0, \ldots$.

\begin{proof}
From Definition \ref{def2.convergence}, it is seen that we may assume, without loss of generality, that $\Phi$ is a monomial in power sums, $\Phi=p_{k_1}\dots p_{k_n}$, and $\Phi_N$ has exactly the same form (where the power sums are restricted to $N$ variables). Next, observe that for $k$ fixed, the quantities $|p_k(X)|$, where $X$ is an arbitrary configuration in $\De_{L,R}$, are uniformly bounded. Together with Lemma \ref{lemma3.C2}, this gives the desired result. 
\end{proof}

\begin{proof}[Proof of Theorem $\ref{thm:qRorth}$]
Let us abbreviate $M_N:= M_N^{qR}(\ccdot; q; \zeta; s_0, s_1, s_2, s_3)$. The natural bijection 
$$
\wt\De_{L + \frac{N-1}{2}, R - \frac{N-1}{2}} \longrightarrow q^{\frac{N-1}2}\wt\De_{L + \frac{N-1}{2}, R - \frac{N-1}{2}}
$$
allows us to carry over $M_N$ to the space of $N$-point configurations in $q^{\frac{N-1}2}\wt\De_{L + \frac{N-1}{2}, R - \frac{N-1}{2}}$. Let $M'_N$ denote the resulting probability measure and $f_N(M'_N)$ be its pushforward under $f_N$ --- it is a probability measure on the space $\Om^{qR}$. 

Let $\{\Phi_N\}$ and $\Phi$ be as in Lemma \ref{lemma3.C3}. It follows from this lemma that
$$
\langle M'_N, \Phi_N\rangle=\langle f_N(M'_N), \Phi\rangle +o(1).
$$
By virtue of Theorem \ref{thm:qRpolyslimit}, we may apply this asymptotic relation to
$$
\Phi_N:=\wt\varphi^{qR}_{\la\mid N}(\ccdot;q;s_0,s_1,s_2,s_3), \quad \Phi:=\Phi^{qR}_{\lambda}(\ccdot; q; s_0,s_1, s_2, s_3),
$$ 
with arbitrary $\la\in\Y$. This gives us
\begin{equation}\label{eq3.limit}
\langle M'_N, \wt\varphi^{qR}_{\la\mid N}(\ccdot;q;s_0,s_1,s_2,s_3)\rangle=\langle f_N(M'_N), \Phi^{qR}_{\lambda}(\ccdot; q; s_0,s_1, s_2, s_3)\rangle +o(1).
\end{equation}

On the other hand, we know that
$$
\langle M_N, \varphi^{qR}_{\la\mid N}(\ccdot;q;s_0,s_1,s_2,s_3)\rangle =\de_{\la,\emptyset}.
$$
From this and the very definition of the polynomials $\wt\varphi^{qR}_{\la\mid N}(\ccdot;q;s_0,s_1,s_2,s_3)$ and the measures $M'_N$ we obtain 
$$
\langle M'_N, \wt\varphi^{qR}_{\la\mid N}(\ccdot;q;s_0,s_1,s_2,s_3)\rangle =\de_{\la,\emptyset}.
$$
Next, because of \eqref{eq3.limit}, we deduce
\begin{equation}\label{eq3.limit2}
\langle f_N(M'_N), \Phi^{qR}_{\lambda}(\ccdot; q; s_0,s_1, s_2, s_3)\rangle=\de_{\la,\emptyset}+o(1).
\end{equation}

Recall that the space $\Om^{qR}$ is compact and observe that the image of $\Sym$ under the map $\Sym\to C(\Om^{qR})$ is dense, by the Stone-Weierstrass theorem (because symmetric functions separate points of $\Om^{qR}$). Since the functions $\Phi^{qR}_{\lambda}(\ccdot; q; s_0,s_1, s_2, s_3)$ form a basis of $\Sym$, we conclude from \eqref{eq3.limit2} that the measures $f_N(M'_N)$ weakly converge to a probability measure $M$, which is uniquely characterized by the relations
$$
\langle M, \Phi^{qR}_{\lambda}(\ccdot; q; s_0,s_1, s_2, s_3)\rangle =\de_{\la,\emptyset}, \quad \la\in\Y.
$$
This completes the proof.
\end{proof}

The following two propositions are easy corollaries of the previous results. 

\begin{proposition}
The probability measure $M^{qR}_{q, \zeta, s_0, s_1, s_2, s_3}$ on $\Om^{qR}$ constructed in Theorem \ref{thm:qRorth} is purely atomic.
\end{proposition}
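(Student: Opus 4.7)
My plan is to show that under $M^{qR}_{q,\zeta,s_0,s_1,s_2,s_3}$, the random configuration $\om\subset\De_{L,R}$ almost surely has finite complement in $\De_{L,R}$. Since $\De_{L,R}$ is countable, the collection of its subsets with finite complement is itself countable, which will force $M^{qR}_{q,\zeta,s_0,s_1,s_2,s_3}$ to be concentrated on a countable set, hence purely atomic.

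To carry this out, I would exploit the weak convergence $f_N(M'_N)\to M^{qR}_{q,\zeta,s_0,s_1,s_2,s_3}$ established in the proof of Theorem \ref{thm:qRorth}. For each $x\in\De_{L,R}$ the coordinate function $\chi_x(\om):=\mathbf{1}_{x\in\om}$ is continuous on the compact product space $\Om^{qR}$, so putting $\rho(x):=\langle M^{qR}_{q,\zeta,s_0,s_1,s_2,s_3},\chi_x\rangle$ and $\rho^{(N)}(x):=\langle f_N(M'_N),\chi_x\rangle$, weak convergence gives $\rho^{(N)}(x)\to\rho(x)$ for every $x$. The key input from the finite setting is that $f_N(M'_N)$ is supported on $N$-point configurations $\om$ sitting inside the $(N+K)$-point image of $f_N$, so within that image the number of holes equals exactly $K$. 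Therefore, for any finite $A\subset\De_{L,R}$ contained in the image of $f_N$,
$$
\sum_{x\in A}(1-\rho^{(N)}(x))=\langle f_N(M'_N),\,|A\setminus\om|\rangle\le K.
$$
Since every point of $\De_{L,R}$ lies in the image of $f_N$ for all sufficiently large $N$ (immediate from the explicit formula \eqref{eqn:fN}), passing to the limit yields $\sum_{x\in A}(1-\rho(x))\le K$ for every finite $A$, and then taking a supremum gives $\sum_{x\in\De_{L,R}}(1-\rho(x))\le K$.

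By monotone convergence the last sum equals $\langle M^{qR}_{q,\zeta,s_0,s_1,s_2,s_3},\,|\De_{L,R}\setminus\om|\rangle$, so the number of holes in $\om$ is almost surely finite, i.e., $\om$ is a.s.\ a cofinite subset of $\De_{L,R}$. Cofinite subsets of a countable set form a countable family, which concludes the argument. The one point that deserves care is the uniform hole bound at finite $N$; once it is noted that the pre-limit particles all sit inside the image of $f_N$ and that this image has exactly $K$ more sites than particles, that bound is structural and the remainder is routine limit-passing.
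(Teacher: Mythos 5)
Your argument is correct, and it differs from the paper's in a way worth noting. The paper fixes an arbitrary $(K+1)$-point subset $A\subset\De_{L,R}$ and shows that the clopen cylinder set $\Om^{qR}[A]=\{X: X^\circ\supseteq A\}$ has measure zero, because for large $N$ the support of $f_N(M'_N)$ consists of configurations with exactly $K$ holes inside an image containing $A$; passing to the limit along the continuous indicator of this clopen set yields that $M$ is concentrated on the countable set $\{X: |X^\circ|\le K\}$. You instead run a first-moment argument: testing weak convergence against the single-coordinate functions $\chi_x$, you bound $\sum_{x\in A}(1-\rho^{(N)}(x))\le K$ by the same exact-$K$-holes structure, pass to the limit, and conclude $\E\bigl(|\De_{L,R}\setminus\om|\bigr)\le K$, hence a.s.\ cofiniteness of $\om$. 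Both proofs rest on the identical structural input (the prelimit grids have $N+K$ sites carrying $N$ particles, and they exhaust $\De_{L,R}$), but the paper's version delivers the sharper almost-sure bound $|X^\circ|\le K$ on the number of holes, whereas your expectation bound only yields almost-sure finiteness of the complement --- which is all that is needed for pure atomicity, since cofinite subsets of a countable set form a countable family. Your route has the minor advantage of using only the simplest continuous test functions; the paper's has the advantage of identifying a smaller countable carrier, which is the more precise description of the support.
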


\begin{proof}
Given a configuration $X$ in $\De_{L,R}$, we denote by $X^\circ$ its complement $\De_{L.R}\setminus X$. As usual, we set $K:=L-R$. Let us abbreviate $M:=M^{qR}_{q, \zeta, s_0, s_1, s_2, s_3}$.  We are going to prove that $M$ is concentrated on the countable subset
\begin{equation}
\{X\in \Om^{qR}: |X^\circ|\le K\},
\end{equation}
which will imply the proposition. 

It suffices to prove that for an arbitrary fixed subset $A\subset\De_{L,R}$ of cardinality $K+1$, the set
$$
\Om^{qR}[A]:=\{X\in\Om^{qR}: X^\circ \supseteq A\}
$$
has $M$-measure $0$. 

From the proof of Theorem \ref{thm:qRorth}, we know that $M$ is the weak limit of the measures that we denoted by $f_N(M'_N)$. Each of these measures has finite support. We claim that for $N$ large enough, each configuration from the support of $f_N(M'_N)$ has a nonempty intersection with $A$. Indeed, observe that the grid $\widetilde{\De}_{L+\frac{N-1}{2}, R-\frac{N-1}{2}}$ consists of $K+N$ points, so that each $N$-point configuration on this grid has precisely $K$ holes (that is, unoccupied nodes). If $N$ is large enough, then the  finite grid $f_N(q^{\frac{N-1}2}\widetilde{\De}_{L+\frac{N-1}{2}, R-\frac{N-1}{2}})$ contains $A$, and our claim follows from the very definition of the measures $M'_N$.  

Thus, for $N$ large enough, the set $\Om^{qR}[A]$ has measure $0$ with respect to $f_N(M'_N)$. Since this set is both open and closed, its characteristic function is continuous, so that we may pass to the limit, as $N\to\infty$, and conclude that $\Om^{qR}[A]$ has measure $0$ with respect to $M$. 
\end{proof}

\begin{proposition}
We assume that $(s_0,s_1,s_2,s_3)$ is admissible and we keep to the notation of Theorem \ref{thm:qRorth}. As above, we set $K:=L-R$ and abbreviate $M:=M^{qR}_{q, \zeta, s_0, s_1, s_2, s_3}$, $\Phi^{qR}_\la:=\Phi^{qR}_{\lambda}(\ccdot; q; s_0,s_1, s_2, s_3)$.

The functions $\Phi^{qR}_{\lambda}$ with index $\la\in\Y$, subject to condition $\la_1\le K$, form an orthogonal basis of the real Hilbert space $L^2(\Om^{qR}, M)$.
\end{proposition}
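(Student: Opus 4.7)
The plan is to deduce the proposition from Theorems~\ref{thm:formalorth} and \ref{thm:qRorth} together with a soft density argument based on Stone--Weierstrass. The key observation is that the defining property \eqref{eqn:emptydelta0} of $M$, combined with the fact that $\{\Phi^{qR}_{\lambda}\}_{\lambda\in\Y}$ is a basis of $\Sym$, forces the ``integration against $M$'' functional and the formal moment functional $\E$ from Section~\ref{sec:formalorth} to coincide on all of $\Sym$. Since $\Sym$ is stable under multiplication, one gets
\[
(\Phi^{qR}_{\lambda},\Phi^{qR}_{\mu})_{L^{2}(\Om^{qR},M)}
=\int_{\Om^{qR}}\Phi^{qR}_{\lambda}\Phi^{qR}_{\mu}\,dM
=\E(\Phi^{qR}_{\lambda}\Phi^{qR}_{\mu})
=\delta_{\lambda,\mu}\,h^{qR}_{\lambda}
\]
for every $\lambda,\mu\in\Y$, using Theorem~\ref{thm:formalorth} in the last equality.

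Next, I would analyze the factor $h^{qR}_{\lambda}$ given by \eqref{eqn:hqR}. Under the admissibility assumption one has $-s_{2}s_{3}=q^{-K}$, so the relevant Pochhammer factor is $(q^{-K};q)_{\lambda}=\prod_{i\ge 1}(q^{-K+1-i};q)_{\lambda_{i}}$; its first factor $(q^{-K};q)_{\lambda_{1}}$ vanishes as soon as $\lambda_{1}\ge K+1$, while the same calculation shows non-vanishing whenever $\lambda_{1}\le K$. Combined with the other factors in \eqref{eqn:hqR} --- which are nonzero by admissibility (and in fact strictly positive: this can be checked directly, or inferred by taking the limit of the positive $N$-variate norms $h^{qR}_{\lambda\mid N}$ from Proposition~\ref{prop:multivariateqR}(ii) via Step~4 of the proof of Theorem~\ref{thm:formalorth}) --- we obtain $h^{qR}_{\lambda}>0$ when $\lambda_{1}\le K$ and $h^{qR}_{\lambda}=0$ otherwise. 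Hence $\Phi^{qR}_{\lambda}\equiv 0$ in $L^{2}(\Om^{qR},M)$ whenever $\lambda_{1}>K$, while the functions $\Phi^{qR}_{\lambda}$ with $\lambda_{1}\le K$ form an orthogonal family of nonzero vectors with squared norms $h^{qR}_{\lambda}$.

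It remains to show completeness. Since $\Om^{qR}\subset\{0,1\}^{\De_{L,R}}$ is a compact metrizable space and symmetric functions (e.g.\ the power sums $p_{k}$, $k\ge 1$) separate its points, the Stone--Weierstrass theorem implies that the image of $\Sym$ in $C(\Om^{qR})$ is uniformly dense. As $M$ is a probability measure on a compact metric space, $C(\Om^{qR})$ embeds densely in $L^{2}(\Om^{qR},M)$, so the image of $\Sym$ is dense in $L^{2}(\Om^{qR},M)$. Because $\{\Phi^{qR}_{\lambda}\}_{\lambda\in\Y}$ spans $\Sym$ linearly, and only those with $\lambda_{1}\le K$ give nonzero vectors in $L^{2}$, their linear span is dense in $L^{2}(\Om^{qR},M)$. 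Combined with the orthogonality established above, this proves that $\{\Phi^{qR}_{\lambda}:\lambda_{1}\le K\}$ is an orthogonal basis.

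The main technical point to verify carefully is the positivity claim $h^{qR}_{\lambda}>0$ for $\lambda_{1}\le K$; this is the only place where the admissibility hypothesis is used in a nontrivial way. Everything else --- passing from formal orthogonality to $L^{2}$ orthogonality via the basis expansion, the vanishing for $\lambda_{1}>K$, and the Stone--Weierstrass density --- is formal once the framework of Theorems~\ref{thm:formalorth} and \ref{thm:qRorth} is in place.
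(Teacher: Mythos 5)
Your proposal is correct and follows essentially the same route as the paper: identify the integration-against-$M$ functional with the formal moment functional $\E$ via the defining relation of $M$ and the basis property, invoke Theorem~\ref{thm:formalorth} for orthogonality, read off the vanishing/nonvanishing of $h^{qR}_\la$ from \eqref{eqn:hqR} via the factor $(-s_2s_3;q)_\la=(q^{-K};q)_\la$, and get completeness from Stone--Weierstrass. The one point the paper includes that you skip is showing that the $\Phi^{qR}_\la$ are actually \emph{real-valued} on $\Om^{qR}$ (needed since the claim concerns the real Hilbert space, and admissibility allows $s_0=\overline{s_1}\in\C\setminus\R$); the paper derives this from Lemma~\ref{positivecoeffs}, and you should add a sentence to that effect. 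Also note that once the $L^2$ and formal inner products are identified, $h^{qR}_\la=\|\Phi^{qR}_\la\|^2\ge0$ automatically, so only the nonvanishing for $\la_1\le K$ — not strict positivity — actually needs to be checked.
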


\begin{proof}
Recall (Theorem \ref{thm:qRpolyslimit}) that the elements $\Phi^{qR}_\la\in\Sym$ coincide with limits of $N$-variate $q$-Racah polynomials, which in turn are built from univariate polynomials. Because $(s_0,s_1,s_2,s_3)$ is admissible, the univariate polynomials have real coefficients by virtue of Lemma \ref{positivecoeffs}. Therefore,  the same holds for the $N$-variate polynomials, and hence the symmetric functions $\Phi^{qR}_{\lambda}$ are well defined over the ground field $\R$. This in turn implies that they produce real-valued functions on $\Om^{qR}$. 

The linear span of those functions is dense in $C(\Om^{qR})$ and hence in $L^2(\Om^{qR},M)$, too. Next, our assumptions on the parameters guarantee that Theorem \ref{thm:formalorth} is applicable. It shows that the functions $\Phi^{qR}_\la$, $\la\in\Y$, are pairwise orthogonal in $L^2(\Om^{qR},M)$. Moreover, from \eqref{eqn:hqR} it is seen that the squared norm of $\Phi^{qR}_\la$ is nonzero if and only if $\la_1\le K$. This completes the proof. 
\end{proof}

\section{Big $q$-Jacobi symmetric functions}\label{sect4}

\subsection{Univariate big $q$-Jacobi polynomials}
For more detail about the material of this subsection, see \cite{Ols-2017} and references therein.

The \emph{big $q$-Jacobi $q$-difference operator} is defined by
\begin{equation*}
D^{bqJ}=A_+(x)(T_q-1)+A_-(x)(T_{q^{-1}}-1),
\end{equation*}
where the coefficients $A_\pm(x)$ are
\begin{equation*}
\begin{gathered}
A_+(x):=\frac{cdq}{ab}\left(1-\frac1{cx}\right)\left(1-\frac1{dx}\right),\\
A_-(x):=\left(1-\frac{q}{ax}\right)\left(1-\frac{q}{bx}\right).
\end{gathered}
\end{equation*}
Here $x$ and  $(a,b,c,d)$ can be initially thought of as a complex variable and generic complex parameters, respectively. A bit later we will impose appropriate constraints on them.

A direct verification shows that
\begin{multline*}
D^{bqJ} x^n=-(q^{-n}-1)\left(\frac{cdq^{n+1}}{ab}-1\right) x^n\\
+\left(-\frac q{ab}(c+d)(q^n-1)-\frac q{ab}(a+b)(q^{-n}-1)\right)x^{n-1}\\
+\left(\frac q{ab}(q^n-1)+\frac{q^2}{ab}(q^{-n}-1)\right) x^{n-2}.
\end{multline*}
Note that the coefficients in front of $x^{n-1}$ and $x^{n-2}$ vanish for $n=0$  and for $n=0,1$, respectively. It follows that $D^{bqJ}$ preserves the space $\C[x]$ together with its natural filtration. Since the quantities $-(q^{-n}-1)\left(\frac{cdq^{n+1}}{ab}-1\right)$, $n=0,1,2,\dots$,  are pairwise distinct (for generic parameters!), there exist monic polynomials
$$
\varphi_n^{bqJ}=\varphi_n^{bqJ} \left( x ; q; a, b, c, d \right), \quad n=0,1,2,\dots,
$$
with $\deg \varphi^{bqJ}_n = n$, which are eigenfunctions of $D^{bqJ}$:
$$
D^{bqJ}\varphi_n^{bqJ}=-(q^{-n}-1)\left(\frac{cdq^{n+1}}{ab}-1\right)\varphi^{bqJ}_n.
$$
Moreover, their coefficients are rational functions of the parameters. These properties determine the polynomials uniquely.

The  polynomials $\varphi_n^{bqJ}$ are called the \emph{big $q$-Jacobi polynomials}. They are symmetric with respect to transpositions $a\leftrightarrow b$ and $c\leftrightarrow d$.  Their hypergeometric representation has the form
\begin{equation}\label{eq4.A1}
\varphi_n^{bqJ} \left( x ; q; a, b, c, d \right) =
\frac{\left( \dfrac{cq}{a}, \dfrac{cq}{b} ; q\right)_n}{c^{n}\left( \dfrac{cd}{ab}q^{n+1} ; q \right)_n}\
{}_3\phi_2\left[\left.\begin{matrix} q^{-n}, \; \dfrac{cd}{ab}q^{n+1}, \; cx \\ \dfrac{cq}{a},\qquad\dfrac{cq}{b} \end{matrix}\,\right|q\right].
\end{equation}

\begin{definition}\label{jacobiadmissible}
We say that a quadruple $(a,b,c,d)$ is \emph{admissible} if $a>0>b$, and the pair $(c, d)$ satisfies one of the following conditions:

\begin{itemize}
	\item $d=\bar c\in\C\setminus\R$;

	\item $a q^m< c,d<aq^{m-1}$ for some $m\in\Z$;

	\item $b q^{m-1} <c,d< b q^m$ for some $m\in\Z$.
\end{itemize}
\end{definition}

For the sequel, it is important to notice that these three conditions are invariant with respect to the homothety with ratio $q^{\pm1}$.

In what follows, we assume (unless otherwise stated) that $(a,b,c,d)$ is admissible. Then the polynomials $\varphi_n^{bqJ} \left( x ; q; a, b, c, d \right)$ are orthogonal on the grid
$$
\De_{a, b} := \{ b^{-1}q, b^{-1}q^2, b^{-1}q^3, \ldots \}  \sqcup \{ \ldots, a^{-1}q^3, a^{-1}q^2, a^{-1}q \}
$$
with respect to the weight function
$$
w^{bqJ}(x):=\const |x|\frac{(ax;q)_\infty(bx;q)_\infty}{(cx;q)_\infty(dx;q)_\infty},\quad \const>0.
$$

\subsection{Construction of the big $q$-Jacobi symmetric functions}

\begin{definition}\label{df:bqJfunction}
For $\lambda\in\Y$, the \textit{big $q$-Jacobi symmetric function} $\Phi^{bqJ}_{\lambda}(\ccdot; q; a, b, c, d)\in\Sym$ is defined by
\begin{equation*}
\Phi^{bqJ}_{\lambda}(X; q; a, b, c, d) := \sum_{\mu\subseteq\lambda}
\sigma^{bqJ}(\lambda, \mu; q; a, b, c, d) I^A_{\mu}(Xc; q),
\end{equation*}
where $I^A_{\mu}(\ccdot; q) \in \Sym$ are defined in Section $\ref{sec:IA}$ and, for $\mu\subseteq\lambda$,
\begin{multline*}
\sigma^{bqJ}(\lambda, \mu; q; a, b, c, d) :=\\
c^{-|\lambda|} \det
\left[ \frac{\left( \frac{c}{a}q^{\mu_k + 2 - k}, \frac{c}{b}q^{\mu_k + 2 - k} ; q\right)_{\lambda_j - j - \mu_k + k}}
{q^{(\mu_k + 1 - k)(\lambda_j - j - \mu_k + k)}
\left( q, \frac{cd}{ab}q^{\lambda_j + \mu_k + 3 - j -  k}; q \right)_{\lambda_j - j - \mu_k + k}} \right]_{j, k = 1}^{\ell(\lambda)}.
\end{multline*}
\end{definition}

(The above definition agrees with that given in \cite[Section 3]{Ols-2017} after a minor correction: the factor $(-1)^{|\la|-|\mu|}$ in \cite[(3.4)]{Ols-2017} should be removed; it arose by mistake, from the unnecessary factor $(-1)^{\ell-m}$ in the unnumbered display after \cite[(3.5)]{Ols-2017}.)

The $N$-variate big $q$-Jacobi polynomials are defined in accordance with the general recipe \eqref{eq1.B}; we would like, however, that two of the parameters $c, d$ depend on $N$ as follows:
\begin{multline*}
\varphi^{bqJ}_{\la | N}(x_1, \ldots, x_N; q; a, b, cq^{1-N}, dq^{1-N}) :=\\
\frac{\det\left[ \varphi^{bqJ}_{\la_i + N - i}(x_j; q; a, b, cq^{1-N}, dq^{1-N}) \right]_{i, j = 1}^N}{V(x_1, \ldots, x_N)}; \ \la\in\Y(N), \ N = 1, 2, \ldots.
\end{multline*}

\begin{theorem}\cite[Theorem 3.4]{Ols-2017}\label{thm:bqJpolyslimit}
For any $\lambda\in\Y$, we have 
\begin{equation*}
\varphi_{\lambda | N}^{bqJ}(\ccdot; q; a, b, cq^{1-N}, dq^{1-N}) \rightarrow \Phi^{bqJ}_{\lambda}(\ccdot; q; a, b, c, d),
\end{equation*}
in the sense of Definition $\ref{def2.convergence}$.
\end{theorem}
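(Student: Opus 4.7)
The plan is to follow, step by step, the same blueprint as the proof of Theorem \ref{thm:qRpolyslimit}, with the interpolation basis $\{I^{BC}_\mu\}$ replaced by $\{I^A_\mu\}$ and the $q$-Racah rescaling $s_i\mapsto s_iq^{(1-N)/2}$ replaced by the big $q$-Jacobi rescaling $c\mapsto cq^{1-N}$, $d\mapsto dq^{1-N}$. The starting point is the hypergeometric representation \eqref{eq4.A1}, which already expands $\varphi_n^{bqJ}(x;q;a,b,c,d)$ as a finite linear combination of factors of the form $(cx;q)_k$; under the rescaling, these factors become $(cq^{1-N}x;q)_k$, and they convert directly into Newton polynomials in the variable $cx$ via the elementary identity
\[
(xc\mid q^{N-1},q^{N-2},\dots)^k \;=\; (-1)^k q^{kN-\binom{k+1}{2}}\,(cq^{1-N}x;q)_k,
\]
which is precisely the Newton basis underlying $I^A_{\mu\mid N}(\ccdot\,c;q)$. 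Reading off the coefficients from the ${}_3\phi_2$ series yields an expansion
\[
\varphi_\ell^{bqJ}(x;q;a,b,cq^{1-N},dq^{1-N}) \;=\; \sum_{k=0}^{\ell} c_N^{bqJ}(\ell,k)\,(xc\mid q^{N-1},q^{N-2},\dots)^k,
\]
with explicit $c_N^{bqJ}(\ell,k)$ that are rational in $q^{-N}$ and possess finite limits as $N\to\infty$.

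The second step is to apply the Cauchy--Binet formula to the determinantal definition \eqref{eq1.B} of $\varphi_{\la\mid N}^{bqJ}$, exactly as in the third step of the proof of Theorem \ref{thm:qRpolyslimit}. This yields an expansion
\[
\varphi_{\la\mid N}^{bqJ}(x_1,\dots,x_N;q;a,b,cq^{1-N},dq^{1-N}) \;=\; \sum_{\mu\in\Y(N)} \sigma_N^{bqJ}(\la,\mu)\,I^A_{\mu\mid N}(x_1c,\dots,x_Nc;q),
\]
where $\sigma_N^{bqJ}(\la,\mu)$ is the determinant of the $N\times N$ matrix with entries $c_N^{bqJ}(\la_j+N-j,\mu_k+N-k)$. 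Factoring $1/(q;q)_{\la_j-\mu_k+k-j}$ out of each entry puts this determinant into the form covered by Lemma \ref{lem:matrices}, which forces $\sigma_N^{bqJ}(\la,\mu)=0$ unless $\mu\subseteq\la$ and, in the nontrivial case, collapses the $N\times N$ determinant to its top-left $\ell(\la)\times\ell(\la)$ block.

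After extracting the row/column prefactors accumulated during the two previous steps, the reduced $\ell(\la)\times\ell(\la)$ determinant equals $\sigma^{bqJ}(\la,\mu;q;a,b,c,d)$ multiplied by the benign factor $\prod_{i=1}^{N}(q;q)_{\la_i+N-i}/(q;q)_{\mu_i+N-i}$, in which only finitely many terms differ from $1$ and each of those converges to $(q;q)_\infty/(q;q)_\infty=1$. Combined with the convergence $I^A_{\mu\mid N}(\ccdot\,c;q)\to I^A_\mu(\ccdot\,c;q)$ from Proposition \ref{prop2.C} and the observation that only finitely many $\mu$ (all with $\mu\subseteq\la$) contribute to the sum, this delivers the claimed convergence in the sense of Definition \ref{def2.convergence}.

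The main obstacle is not conceptual but bookkeeping: tracking the shifts in the $q$-Pochhammer indices produced by $c\mapsto cq^{1-N}$, $d\mapsto dq^{1-N}$, and verifying that, after extracting the appropriate row and column prefactors, the surviving determinant entries match exactly the factors $(cq^{\mu_k+2-k}/a,\,cq^{\mu_k+2-k}/b;\,q)_{\la_j-j-\mu_k+k}$ and $1/\bigl(q,\,cdq^{\la_j+\mu_k+3-j-k}/(ab);\,q\bigr)_{\la_j-j-\mu_k+k}$ that appear in Definition \ref{df:bqJfunction}. Once this algebraic identification is in hand, the rest of the argument runs in strict parallel with the $q$-Racah case and requires no new ideas.
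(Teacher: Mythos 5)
Your proposal is correct and follows essentially the same route as the source: the paper itself only cites \cite[Theorem 3.4]{Ols-2017} for this statement, but your argument (expansion of the rescaled univariate polynomial in the Newton basis $(xc\mid q^{N-1},q^{N-2},\dots)^k$, Cauchy--Binet, Lemma \ref{lem:matrices} for triangularity and collapse of the determinant, then termwise limits via Proposition \ref{prop2.C}) is exactly the blueprint used in the paper's proofs of the analogous Theorems \ref{thm:qRpolyslimit} and \ref{thm:qMpolyslimit}, specialized to the big $q$-Jacobi data. The only caveat is the one you already flag: the finite limits are taken of the coefficients $c_N^{bqJ}(\ell_j,m_k)$ evaluated at indices $\ell_j=\la_j+N-j$, $m_k=\mu_k+N-k$ that grow with $N$ (so that, e.g., $cq^{1-N}\cdot q^{m_k+1}=cq^{\mu_k+2-k}$ is $N$-independent), after extracting the $(q;q)_{\ell_j}/(q;q)_{m_k}$ prefactors --- precisely the bookkeeping you describe.
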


\subsection{Orthogonality measures}

Let $\Omega^{bqJ}$ be the set of all point configurations on $\De_{a, b}$, equipped with the topology coming from its identification with $\{0, 1\}^{\De_{a, b}}$. For $\la\in\Y$, we set
\begin{equation}\label{eqn:hbqJ}
h_{\la}^{bqJ}(q; a, b, c, d) := q^{2 \{ n(\la') - n(\la) \}}
\frac{ \left(\frac{c q}{a}, \frac{c q}{b}, \frac{d q}{a}, \frac{d q}{b}; q\right)_{\la}}{ \left( \frac{cdq^2}{ab}; q \right)_{\widehat{\la}}} \left( \frac{cd q^3}{a^2 b^2} \right)^{|\la|}.
\end{equation}
Recall that $\widehat{\la}$ denotes the partition $(2\la_1, 2\la_1, 2\la_2, 2\la_2, 2\la_3, \dots)$  whose Young diagram is obtained from the Young diagram of $\la$ by replacing each square with a $2\times 2$ square.

\begin{theorem}\cite[Theorem 4.1]{Ols-2017}\label{thm:bqJorth}
Assume that $(a, b, c, d)$ is admissible in the sense of Definition $\ref{jacobiadmissible}$.

{\rm(i)} There exists a unique probability measure $M^{bqJ}_{q, a, b, c, d}$ on $\Omega^{bqJ}$ such that the big $q$-Jacobi symmetric functions  $\{ \Phi^{bqJ}_{\la} = \Phi_{\la}^{bqJ}(\ccdot; q; a, b, c, d) \}_{\la\in\Y}$ form an orthogonal basis in the Hilbert space $L^2(\Omega^{bqJ}, M^{bqJ}_{q, a, b, c, d})$.

{\rm(ii)} The squared norms of the these functions are given by
$$
\left(\Phi_{\lambda}^{bqJ}, \Phi_\la^{bqJ} \right)_{L^2 \left(\Omega^{bqJ}, M^{bqJ}_{q, a, b, c, d} \right)}
=h_{\la}^{bqJ}(q; a, b, c, d),
$$
where $h_{\la}^{bqJ}(q; a, b, c, d)$ is defined in $(\ref{eqn:hbqJ})$.
\end{theorem}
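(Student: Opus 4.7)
The plan is to parallel the proof of Theorem \ref{thm:qRorth}, replacing the pre-limit $q$-Racah measures with their big $q$-Jacobi counterparts and invoking Theorem \ref{thm:bqJpolyslimit} in place of Theorem \ref{thm:qRpolyslimit}. A key point is that the admissibility conditions in Definition \ref{jacobiadmissible} are invariant under the homothety $(c,d)\mapsto(cq^{\pm1},dq^{\pm1})$, so admissibility of $(a,b,c,d)$ forces admissibility of $(a,b,cq^{1-N},dq^{1-N})$ for every $N\ge1$. Consequently the univariate polynomials $\varphi_n^{bqJ}(\ccdot;q;a,b,cq^{1-N},dq^{1-N})$ are orthogonal on $\De_{a,b}$ with weight $w^{bqJ}$, and the $N$-variate polynomials $\varphi_{\la\mid N}^{bqJ}(\ccdot;q;a,b,cq^{1-N},dq^{1-N})$, $\la\in\Y(N)$, are orthogonal on $N$-point configurations in $\De_{a,b}$ with respect to the determinantal probability measure
\[
M_N^{bqJ}(X):=\const\cdot V(X)^2\prod_{x\in X}w^{bqJ}(x;q;a,b,cq^{1-N},dq^{1-N}),
\]
with squared norms $h_{\la\mid N}^{bqJ}$ available in closed form. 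Set $\E_N(\varphi_{\la\mid N}^{bqJ}):=\de_{\la,\emptyset}$, so that $\E_N(\varphi_{\la\mid N}^{bqJ}\varphi_{\mu\mid N}^{bqJ})=\de_{\la,\mu}h_{\la\mid N}^{bqJ}$.

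First I would establish formal orthogonality: define the moment functional $\E:\Sym\to\C$ by $\E(\Phi_\la^{bqJ})=\de_{\la,\emptyset}$. Combining Theorem \ref{thm:bqJpolyslimit} with the multiplicativity argument used in the proof of Theorem \ref{thm:formalorth} (convergence in the basis $\{\varphi_{\la\mid N}^{bqJ}\}$ implies convergence of products in the same basis) yields $\E(\Phi_\la^{bqJ}\Phi_\mu^{bqJ})=\lim_{N\to\infty}\E_N(\varphi_{\la\mid N}^{bqJ}\varphi_{\mu\mid N}^{bqJ})$ for all $\la,\mu\in\Y$. For $\la\ne\mu$ this immediately gives orthogonality, and a direct asymptotic analysis of $h_{\la\mid N}^{bqJ}$ as $N\to\infty$ (pulling out the powers of $q$ produced by the factors $(cdq^{3-2N}/(ab);q)_{\la_i}$-type Pochhammers, exactly as in Step 4 of the proof of Theorem \ref{thm:formalorth}) produces the explicit closed form $h_\la^{bqJ}(q;a,b,c,d)$ in \eqref{eqn:hbqJ}.

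Next, for the orthogonality measure on $\Omega^{bqJ}$, observe that $\Omega^{bqJ}\subset\{0,1\}^{\De_{a,b}}$ is compact metrizable, and because $\De_{a,b}$ consists of two geometric sequences accumulating at $0$, each power sum $p_k(X)=\sum_{x\in X}x^k$ converges uniformly on $\Omega^{bqJ}$ and defines a continuous function there; by Stone--Weierstrass the image of $\Sym$ is therefore dense in $C(\Omega^{bqJ})$. Unlike the $q$-Racah setting, no rescaling/embedding is required: the measures $M_N^{bqJ}$ already live on $\Omega^{bqJ}$. The analog of Lemma \ref{lemma3.C3}---proved by reducing $\Phi\in\Sym$ to a monomial in power sums and exploiting the uniform boundedness of each $p_k$ on $\Omega^{bqJ}$---gives the uniform estimate $\varphi_{\la\mid N}^{bqJ}(X)=\Phi_\la^{bqJ}(X)+o(1)$ over $N$-point configurations $X\subset\De_{a,b}$. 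Combined with $\langle M_N^{bqJ},\varphi_{\la\mid N}^{bqJ}\rangle=\de_{\la,\emptyset}$, this yields $\langle M_N^{bqJ},\Phi_\la^{bqJ}\rangle\to\de_{\la,\emptyset}$. Density then forces weak convergence of $M_N^{bqJ}$ to a unique probability measure $M^{bqJ}_{q,a,b,c,d}$ on $\Omega^{bqJ}$ satisfying $\langle M^{bqJ}_{q,a,b,c,d},\Phi_\la^{bqJ}\rangle=\de_{\la,\emptyset}$ for all $\la\in\Y$. Part (ii) then follows immediately: the bilinear form $(F,G)\mapsto\int FG\,dM^{bqJ}_{q,a,b,c,d}$ on $\Sym$ coincides with $\E(FG)$ on the basis $\{\Phi_\la^{bqJ}\}$, so the squared norms are precisely the $h_\la^{bqJ}(q;a,b,c,d)$ computed in the formal-orthogonality step.

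The main obstacle I anticipate is the uniform approximation $\varphi_{\la\mid N}^{bqJ}(X)-\Phi_\la^{bqJ}(X)\to 0$ over all admissible $N$-point configurations $X\subset\De_{a,b}$: Definition \ref{def2.convergence} only guarantees pointwise convergence on finite tuples $(x_1,\dots,x_L)$, whereas here $X$ may cluster near the accumulation point $0$ as $N\to\infty$. The geometric decay of $\De_{a,b}$ at $0$ is exactly what rescues the argument---it yields absolute summability of every $p_k$ uniformly over $\Omega^{bqJ}$---but its careful execution, mirroring the passage through Lemmas \ref{lemma3.C1}--\ref{lemma3.C3} in the $q$-Racah proof, is the technical heart of the argument.
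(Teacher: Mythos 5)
Your proposal is correct and follows essentially the same strategy as the source: the paper imports this statement from \cite[Theorem 4.1]{Ols-2017} without reproving it, and both that proof and the paper's analogous argument for Theorem \ref{thm:qRorth} proceed exactly as you describe --- pre-limit orthogonality of the $N$-variate polynomials on configurations in the fixed grid $\De_{a,b}$, uniform approximation of $\varphi^{bqJ}_{\la\mid N}$ by $\Phi^{bqJ}_\la$ via boundedness of power sums on $\Omega^{bqJ}$, and weak convergence of the measures $M^{bqJ}_N$ by compactness and Stone--Weierstrass. Your observation that no rescaling/embedding analogue of $f_N$ is needed here (since the grid does not move with $N$) is accurate and correctly identifies the simplification relative to the $q$-Racah case.
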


\begin{remark}\label{rem4.A}
As in the case of $q$-Racah symmetric functions, in the definition of the big $q$-Jacobi symmetric functions $\Phi_{\lambda}^{bqJ}$, the assumptions on the parameters $a,b,c,d$ may be relaxed: all of them they may be complex numbers subject to mild (generic) constraints. Then we obtain what we call `formal orthogonality': the coefficient of $\Phi^{bqJ}_{\emptyset} = 1$ in the expansion of $\Phi^{bqJ}_{\la}\Phi^{bqJ}_{\mu}$ (in the basis $\{\Phi^{bqJ}_{\nu}\}_{\nu}$) is $\delta_{\la, \mu}h^{bqJ}_{\la}(q; a, b, c, d)$.
\end{remark}

\subsection{Limit transition: $q$-Racah $\rightarrow$ big $q$-Jacobi.}

In the article \cite{Koo-2011}, Koornwinder observed that the limit transition `$q$-Racah $\rightarrow$ big $q$-Jacobi' that is well-known in the literature, \cite[Section 4.6]{KS}, has one defect: the orthogonality of the $q$-Racah polynomials is lost when they are renormalized to take the limit to big $q$-Jacobi polynomials.
The main result of \cite{Koo-2011} is a new limit transition such that the $q$-Racah polynomials still form finite families of orthogonal polynomials as they approach the big $q$-Jacobi polynomials.
Moreover, the orthogonality measures for the renormalized $q$-Racah polynomials converge weakly to the orthogonality measures of the big $q$-Jacobi polynomials. Below we use this result in a slightly modified form. 

\begin{proposition}\label{prop4.A}
The following limit holds for all $n\in\Z_{\ge0}$:
\begin{equation}\label{koornwinderlimit}
\varphi^{bqJ}_n(x; q; a, b, c, d) = \lim_{\eps \rightarrow 0}{\eps^n\varphi^{qR}_n(x\eps^{-1}; q; \eps c, \eps d, -q/(\eps a), -q/(\eps b))}.
\end{equation}
\end{proposition}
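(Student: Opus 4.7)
The plan is to substitute the prescribed parameters directly into the explicit ${}_4\phi_3$-representation \eqref{eqn:qRpoly} of $\varphi_n^{qR}$ and take a termwise limit. Writing $s_0=\eps c$, $s_1=\eps d$, $s_2=-q/(\eps a)$, $s_3=-q/(\eps b)$, the product $s_0 s_1 s_2 s_3 = cd q^2/(ab)$ is independent of $\eps$, so $s_0 s_1 s_2 s_3 q^{n-1} = cdq^{n+1}/(ab)$, matching the distinguished upper parameter in the ${}_3\phi_2$ of \eqref{eq4.A1}. Among the lower parameters of the ${}_4\phi_3$ we get $-s_0 s_1 = -\eps^2 cd \to 0$, while $-s_0 s_2 = cq/a$ and $-s_0 s_3 = cq/b$ are independent of $\eps$, matching the two lower parameters of the ${}_3\phi_2$.

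The key step is to handle the two $v$-dependent upper parameters $s_0 v$ and $-s_0 v^{-1}$, where $v$ is any solution of $v - v^{-1} = x/\eps$. Rather than track $v$ explicitly, I would combine the two associated Pochhammer symbols inside the $k$-th term of the ${}_4\phi_3$:
\begin{equation*}
(s_0 v;q)_k (-s_0 v^{-1};q)_k = \prod_{j=0}^{k-1}\bigl(1 - s_0 q^j(v - v^{-1}) - s_0^2 q^{2j}\bigr) = \prod_{j=0}^{k-1}\bigl(1 - cx q^j - \eps^2 c^2 q^{2j}\bigr),
\end{equation*}
which is manifestly a polynomial in $x$ (so no branch issue arises) and converges to $(cx;q)_k$ as $\eps\to0$. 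Simultaneously, the denominator factor $(-s_0 s_1;q)_k = (-\eps^2 cd;q)_k$ tends to $1$. Since the sum is truncated at $k=n$ by $(q^{-n};q)_k$, I can interchange limit and finite sum to conclude that the ${}_4\phi_3$ converges to the ${}_3\phi_2$ appearing in \eqref{eq4.A1}.

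It remains to handle the prefactor. A direct computation gives
\begin{equation*}
\eps^n\cdot\frac{\prod_{k=1}^3(-s_0 s_k;q)_n}{s_0^n (s_0 s_1 s_2 s_3 q^{n-1};q)_n} = \frac{(-\eps^2 cd;q)_n\,(cq/a;q)_n\,(cq/b;q)_n}{c^n\,(cdq^{n+1}/(ab);q)_n},
\end{equation*}
and the factor $(-\eps^2 cd;q)_n\to1$, so the prefactor converges to the prefactor in \eqref{eq4.A1}. Multiplying the two limits yields $\varphi_n^{bqJ}(x;q;a,b,c,d)$.

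No serious obstacle is expected; the only nontrivial observation is the symmetric combination of $(s_0v;q)_k$ and $(-s_0v^{-1};q)_k$ that produces a clean polynomial expression in $x$ with a controlled $\eps\to0$ limit, bypassing any need to fix a branch for $v$.
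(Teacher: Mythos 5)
Your computation is correct and is precisely the ``short computation'' via the hypergeometric representations \eqref{eqn:qRpoly} and \eqref{eq4.A1} that the paper's proof alludes to (citing Koornwinder) without writing out; all the parameter matchings, the termwise limit of the terminating sum, and in particular the pairing $(s_0v;q)_k(-s_0v^{-1};q)_k=\prod_{j=0}^{k-1}(1-cxq^j-\eps^2c^2q^{2j})$ that sidesteps the choice of $v$, check out. This is essentially the same approach as the paper, just carried out in full detail.
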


\begin{proof}
This can be deduced from the $q$-hypergeometric representation of the polynomials in question (formulas 
\eqref{eqn:qRpoly} and \eqref{eq4.A1} above) by a short computation, similar to that in \cite[Theorem 1]{Koo-2011}. An alternative way is to pass to the limit in terms of the corresponding $q$-difference operators. 
\end{proof}

\begin{lemma}\label{lemma4.A}
Fix an arbitrary quadruple $(a,b,c,d)$ which is admissible in the sense of Definition \ref{jacobiadmissible}. For  $M\in\Z_{\ge0}$, set
$$
\eps:=q^{M+1}/\sqrt{-ab}, \quad \zeta:=\sqrt{-b/a}, \quad L:=M, \quad R:=-M.
$$
Then the quadruple 
$$
(s_0,s_1,s_2,s_3):=(\eps c, \eps d, -q/(\eps a), -q/(\eps b)),
$$ 
entering the right-hand side of \eqref{koornwinderlimit}, is admissible in the sense of Definition \ref{df:qRfunction}, with the same parameters $\zeta,L,R$ as indicated above. 
\end{lemma}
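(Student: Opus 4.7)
The plan is essentially a direct verification: translate the defining formulas for $s_0,s_1,s_2,s_3$ into the coordinates $(\zeta,L,R)$ and check each of the three cases in Definition~\ref{positivity} separately. The only arithmetic that really matters is the pair of identities
\begin{equation*}
\eps a = \zeta^{-1}q^{M+1}, \qquad \eps b = -\zeta q^{M+1},
\end{equation*}
which are immediate from $\eps=q^{M+1}/\sqrt{-ab}$ and $\zeta=\sqrt{-b/a}$ (using $a>0$, $b<0$, so that $\sqrt{-ab}$ is a well-defined positive real, and $\eps>0$).

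Given these identities, I would first verify the conditions on $s_2$ and $s_3$. Since $s_2=-q/(\eps a)=-\zeta q^{-M}$ and $s_3=-q/(\eps b)=\zeta^{-1}q^{-M}$, we obtain $s_2=-\zeta q^R$ and $s_3=\zeta^{-1}q^{-L}$ exactly, with $R=-M$ and $L=M$, as required.

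Next I would run through the three alternatives in Definition~\ref{jacobiadmissible} for $(c,d)$ and show that each one is sent to the corresponding alternative in Definition~\ref{positivity} for $(s_0,s_1)=(\eps c,\eps d)$. In the complex-conjugate case $d=\overline{c}\in\C\setminus\R$, multiplication by the positive real number $\eps$ preserves the property, giving $s_1=\overline{s_0}\in\C\setminus\R$. In the case $aq^m<c,d<aq^{m-1}$, multiplying by $\eps>0$ and using $\eps a=\zeta^{-1}q^{M+1}$ yields
\begin{equation*}
\zeta^{-1}q^{M+m+1}<s_0,s_1<\zeta^{-1}q^{M+m},
\end{equation*}
which is the second alternative of Definition~\ref{positivity} with index $m'=M+m$. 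Finally, in the case $bq^{m-1}<c,d<bq^m$, multiplication by $\eps>0$ together with $\eps b=-\zeta q^{M+1}$ gives
\begin{equation*}
-\zeta q^{M+m}<s_0,s_1<-\zeta q^{M+m+1},
\end{equation*}
which is the third alternative of Definition~\ref{positivity} with $m'=M+m$.

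There is no real obstacle here; the entire content of the lemma is that the scalings $\eps$ and $\zeta$ in the statement have been chosen precisely so that the conversion $(a,b)\mapsto(\eps a,\eps b)=(\zeta^{-1}q^{M+1},-\zeta q^{M+1})$ turns ``$c,d$ close to $a$'' into ``$s_0,s_1$ on the positive part of the grid'' and ``$c,d$ close to $b$'' into ``$s_0,s_1$ on the negative part of the grid''. The mild care needed is only in tracking the inequality directions when multiplying by the negative number $b$, which is transparent once it is observed that $b<0<q<1$ forces $bq^{m-1}<bq^m$.
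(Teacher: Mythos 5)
Your proof is correct and follows the same route as the paper's, which simply notes that $s_2=-\zeta q^R$, $s_3=\zeta^{-1}q^{-L}$ hold and that the three alternatives for $(c,d)$ translate directly into the three alternatives for $(s_0,s_1)$; you have merely written out the identities $\eps a=\zeta^{-1}q^{M+1}$, $\eps b=-\zeta q^{M+1}$ and the case check explicitly. No gaps.
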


\begin{proof}
Recall that $a>0>b$, so that $\zeta>0$. Obviously, $s_2=-\zeta q^R$ and $s_3=\zeta^{-1} q^{-L}$, as required. Next, it is directly checked that the three conditions on $(c,d)$ in Definition \ref{df:bqJfunction} are exactly translated into the three conditions on $(s_0,s_1)$ in Definition \ref{df:qRfunction}.
\end{proof}

\begin{theorem}\label{thm:qRbqJ}
Let $(a,b,c,d)$ be admissible in the sense of Definition \ref{jacobiadmissible}. For any $\lambda\in\Y$, one has\begin{equation}\label{eqn:qRbqJ}
\Phi_{\lambda}^{bqJ}(X; q; a, b, c, d) =
\lim_{\eps\rightarrow 0}{ \eps^{|\lambda|}\Phi_{\lambda}^{qR}\left( X \eps^{-1}; q; \eps c, \eps d, -\frac{q}{\eps a}, -\frac{q}{\eps b} \right) }.
\end{equation}
Here we assume that $\eps$ goes to $0$ along the grid $q^{\Z_{\geq 1}}/\sqrt{-ab}$, as in Lemma \ref{lemma4.A}, and the limit holds in the finite-dimensional subspace of symmetric functions of degree $\leq |\lambda|$.
\end{theorem}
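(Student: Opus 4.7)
The plan is to prove the theorem by passing to the limit term-by-term in the expansion of $\Phi^{qR}_\la$ on the basis $\{I^{BC}_\mu(\ccdot;q;s_0)\}$ from Definition~\ref{df:qRfunction}.  Setting $s_0=\eps c$, substitution into that expansion and multiplication by $\eps^{|\la|}$ yields
\[
\eps^{|\la|}\Phi^{qR}_\la(X\eps^{-1};q;\eps c,\eps d,-q/(\eps a),-q/(\eps b)) = \sum_{\mu\subseteq\la}\bigl[\eps^{|\la|-|\mu|}\sigma^{qR}(\la,\mu;\ccdot)\bigr]\cdot\bigl[\eps^{|\mu|}I^{BC}_\mu(X\eps^{-1};q;\eps c)\bigr].
\]
The right-hand side is a finite sum, so it suffices to compute the limit of each of the two bracketed factors separately and to verify that the resulting expression reproduces the $I^A$-expansion of $\Phi^{bqJ}_\la$ from Definition~\ref{df:bqJfunction}.

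The limit of the basis factor is immediate from Proposition~\ref{BCtoA}: rewriting $I^{BC}_\mu(X\eps^{-1};q;\eps c) = I^{BC}_\mu((Xc)(\eps c)^{-1};q;\eps c)$ and applying the proposition with parameter $\eps c\to 0$ and argument $Xc$ gives
\[
\lim_{\eps\to 0}\eps^{|\mu|}I^{BC}_\mu(X\eps^{-1};q;\eps c) = c^{-|\mu|}I^A_\mu(Xc;q)
\]
in the finite-dimensional sense of Definition~\ref{def2.convergence}.

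The limit of the coefficient factor rests on the elementary but crucial identity
\[
s_0 s_1 s_2 s_3 = (\eps c)(\eps d)(-q/(\eps a))(-q/(\eps b)) = \frac{cdq^2}{ab},
\]
which is free of $\eps$.  Because of this cancellation, the Pochhammer factors $(s_0 s_1 s_2 s_3 q^{\la_j+\mu_k+1-j-k};q)_{\la_j-j-\mu_k+k}$ in the denominator of the matrix defining $\sigma^{qR}$ converge to exactly the denominator factors $((cd/ab)q^{\la_j+\mu_k+3-j-k};q)_{\la_j-j-\mu_k+k}$ appearing in the matrix for $\sigma^{bqJ}$.  Simultaneously, $-s_0 s_2 \to cq/a$ and $-s_0 s_3 \to cq/b$ produce the matching numerator factors, while $-s_0 s_1 = -\eps^2 cd \to 0$ contributes trivial Pochhammer factors tending to $1$.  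Combined with the outer prefactor $s_0^{|\mu|-|\la|} = \eps^{|\mu|-|\la|}c^{|\mu|-|\la|}$, which exactly absorbs the $\eps^{|\la|-|\mu|}$ in front, this yields
\[
\lim_{\eps\to 0}\eps^{|\la|-|\mu|}\sigma^{qR}(\la,\mu;q;\eps c,\eps d,-q/(\eps a),-q/(\eps b)) = c^{|\mu|}\sigma^{bqJ}(\la,\mu;q;a,b,c,d).
\]
Assembling the two limits term by term recovers $\sum_{\mu\subseteq\la}\sigma^{bqJ}(\la,\mu;\ccdot)I^A_\mu(Xc;q)=\Phi^{bqJ}_\la(X;q;a,b,c,d)$, as required.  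The restriction of $\eps$ to the grid $q^{\Z_{\ge 1}}/\sqrt{-ab}$ plays no role in this purely algebraic computation; it is relevant only for the compatibility of the limit with the orthogonality measures of Theorems~\ref{thm:qRorth} and \ref{thm:bqJorth}.

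The only delicate point is the $\eps$-freeness of $s_0 s_1 s_2 s_3$: this is precisely why Koornwinder's substitution preserves the full four-parameter Pochhammer structure of the matrix entries and yields $\sigma^{bqJ}$ in the limit.  A naive substitution in which $s_0 s_1 s_2 s_3 \to 0$ would collapse those Pochhammer factors to $1$ and produce a \emph{different}, incorrect limit symmetric function -- reflecting, on the symmetric-function side, exactly the orthogonality-destroying defect of the classical limit transition that Koornwinder's new limit was designed to repair.
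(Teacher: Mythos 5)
Your proof is correct and follows essentially the same route as the paper's: reduce to the two term-by-term limits $\eps^{|\la|-|\mu|}\sigma^{qR}\to c^{|\mu|}\sigma^{bqJ}$ and $\eps^{|\mu|}I^{BC}_\mu(X\eps^{-1};q;\eps c)\to c^{-|\mu|}I^A_\mu(Xc;q)$, obtain the second from Proposition \ref{BCtoA} with $X\mapsto Xc$ and $\eps\mapsto c\eps$, and obtain the first by the same factor-by-factor analysis of the matrix entries (including the key observation that $s_0s_1s_2s_3=cdq^2/(ab)$ is independent of $\eps$). The paper likewise remarks that the computation is purely formal and that the admissibility and grid restrictions matter only for the measure-level statement, so nothing is missing.
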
 

Note that, by virtue of Lemma \ref{lemma4.A}, the quadruple of the parameters on the right-hand side is admissible in the sense of Definition \ref{df:qRfunction}. This is the only reason why we want $(a,b,c,d)$ to be admissible and $\eps$ range over a special grid.
The argument below is formal and does not really use this assumption. 

\begin{proof}
By comparing the formulas in Definitions \ref{df:qRfunction} and \ref{df:bqJfunction}, it suffices to prove
\begin{equation}\label{eqn:limitsRtobqJ}
\begin{gathered}
\lim_{\eps \rightarrow 0}{ \eps^{|\lambda| - |\mu|} \sigma^{qR} \left( \lambda, \mu; q; \eps c, \eps d, -\frac{q}{\eps a}, -\frac{q}{\eps b} \right) }
 = c^{|\mu|} \sigma^{bqJ}(\lambda, \mu; q; a, b, c, d),\\
\lim_{\eps\rightarrow 0}{ \eps^{|\mu|} I_{\mu}^{BC}(X\eps^{-1}; q; \eps c) } = c^{-|\mu|} I^A_{\mu}(Xc; q),
\end{gathered}
\end{equation}
for all $\mu\subseteq\lambda$.
The second limit in $(\ref{eqn:limitsRtobqJ})$ is exactly that of Proposition $\ref{BCtoA}$, with $X$ replaced by $Xc$ and $\eps$ replaced by $c\eps$. Let us now prove the first limit.
Fix $\mu\subseteq\lambda$ and let us estimate the terms in formula $(\ref{eqn:sigmaqR})$ after setting $s_0 := \eps c,\ s_1 := \eps d,\ s_2 := -q/(\eps a),\ s_3 := -q/(\eps b)$:

\smallskip

	$\bullet$ $(-s_0 s_1q^{\mu_k+1-k}; q)_{\lambda_j - j - \mu_k + k} = (-cd q^{\mu_k+1-k}\eps^2; q)_{\lambda_j - j - \mu_k + k} = 1 + O(\eps^2)$,
	
	$\bullet$ $(-s_0 s_2q^{\mu_k+1-k}; q)_{\lambda_j - j - \mu_k + k} = \left( \frac{c}{a}q^{\mu_k + 2 - k} ; q\right)_{\lambda_j - j - \mu_k + k}$,
	
	$\bullet$ $(-s_0 s_3q^{\mu_k+1-k}; q)_{\lambda_j - j - \mu_k + k} = \left( \frac{c}{b}q^{\mu_k + 2 - k} ; q\right)_{\lambda_j - j - \mu_k + k}$,
	
	$\bullet$ $(s_0 s_1 s_2 s_3q^{\lambda_j + \mu_k + 1 - j - k}; q)_{\lambda_j - j - \mu_k + k} =
\left( \frac{cd}{ab}q^{\lambda_j + \mu_k + 3  -  j -  k} ; q\right)_{\lambda_j - j - \mu_k + k}$,
	
	$\bullet$ $s_0^{|\mu| - |\lambda|} = \eps^{|\mu| - |\lambda|} c^{|\mu| - |\lambda|}$.

\smallskip

By putting all these items together, we obtain the first limit in $(\ref{eqn:limitsRtobqJ})$.
\end{proof}

We are going to show that the limit transition in Theorem \ref{thm:qRbqJ} is consistent with the orthogonality measures. For the precise formulation, we need  a little preparation. 

We keep to the notation
\begin{equation}\label{eq4.A}
\begin{gathered}
\eps:=q^{M+1}/\sqrt{-ab}, \quad \zeta:=\sqrt{-b/a}, \quad L:=M, \quad R:=-M, \\
(s_0,s_1,s_2,s_3):=(\eps c, \eps d, -q/(\eps a), -q/(\eps b)),
\end{gathered}
\end{equation}
introduced in Lemma \ref{lemma4.A}. As above, we assume that $(a,b,c,d)$ is admissible in the sense of Definition  \ref{df:bqJfunction} and is fixed, while $M$ ranges over $\Z_{\ge0}$. Thus, $\eps$ and $(s_0,s_1,s_2,s_3)$ vary together with $M$. 

Since (as was already noted above) the quadruple $(s_0,s_1,s_2,s_3)$ is admissible in the sense of Definition  \ref{df:qRfunction}, Theorem \ref{thm:qRorth} tells us about the existence of a probability measure $M^{qR}_{q,\zeta,s_0,s_1,s_2,s_3}$. This measure serves as the orthogonality measure for the corresponding $q$-Racah symmetric functions and lives on the space of configurations on the grid
$$
\De^{qR}_{M, -M} := \{ -\zeta^{-1}q^{-M}, -\zeta^{-1}q^{1-M}, \ldots\} \sqcup \{\ldots, \zeta q^{1-M}, \zeta q^{-M} \}.
$$

On the other hand, we know from Theorem \ref{thm:bqJorth} that the big $q$-Jacobi symmetric functions with parameters $(a,b,c,d)$ are orthogonal with respect to a probability measure $M^{bqJ}_{q, a, b, c, d}$, which lives on configurations on the grid 
$$
\Delta^{bqJ}_{a, b} := \{ b^{-1}q, b^{-1}q^2, \ldots \} \sqcup \{\ldots, a^{-1}q^2, a^{-1}q\}.
$$

Now we observe that homothety $x\mapsto x\eps$ induces a bijection $\De^{qR}_{M, -M} \to \Delta^{bqJ}_{a, b}$, because $b^{-1}q=-\zeta^{-1}q^{-M}\eps$ and $a^{-1}q=\zeta q^{-M}\eps$, as is seen from \eqref{eq4.A}. 

This homothety extends to a bijection, $X\mapsto X\eps$, between the two spaces of point configurations. This  allows us to define a rescaled version $\wt M^{qR}_{q,\zeta,s_0,s_1,s_2,s_3}$ of the measure $M^{qR}_{q,\zeta,s_0,s_1,s_2,s_3}$ --- its pushforward under the map $X\mapsto X\eps$. Thus,  $\wt M^{qR}_{q,\zeta,s_0,s_1,s_2,s_3}$ and $M^{bqJ}_{q,a,b,c,d}$ are defined on the same space.  

\begin{theorem}
Let $M^{bqJ}_{q,a,b,c,d}$ be the orthogonality measure for the big $q$-Jacobi symmetric functions with fixed admissible parameters $(a,b,c,d)$. Next, let $M$ range over $\Z_{\ge0}$, and let $\zeta$ and $(s_0,s_1,s_2,s_3)$ be defined by \eqref{eq4.A}. Finally, let $\wt M^{qR}_{q,\zeta,s_0,s_1,s_2,s_3}$ be the rescaled version of the $q$-Racah orthogonality measure $M^{qR}_{q,\zeta,s_0,s_1,s_2,s_3}$,  as defined above. Recall that  $\wt M^{qR}_{q,\zeta,s_0,s_1,s_2,s_3}$ depends on $M$.

As $M\to\infty$, the measures $\wt M^{qR}_{q,\zeta,s_0,s_1,s_2,s_3}$ weakly converge to the measure $M^{bqJ}_{q, a, b, c, d}$.
\end{theorem}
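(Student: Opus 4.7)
The plan is to combine Stone--Weierstrass density with the polynomial limit transition of Theorem \ref{thm:qRbqJ} and the defining orthogonality of the two measures. Because $\Om^{bqJ}$ is compact and metrizable, weak convergence is equivalent to convergence of integrals against a family of test functions whose linear span is uniformly dense in $C(\Om^{bqJ})$. Each power sum $p_k$, $k\ge 1$, is continuous on $\Om^{bqJ}$ (dominated convergence using $\sum_{y\in\De^{bqJ}_{a,b}}|y|^k<\infty$), and the power sums separate configurations, since they determine the Stieltjes transform of the associated counting measure. Hence $\Sym$ is uniformly dense in $C(\Om^{bqJ})$, and it suffices to prove
\[
\int_{\Om^{bqJ}}\Phi^{bqJ}_\la(X;q;a,b,c,d)\, d\widetilde M^{qR}_{q,\zeta,s_0,s_1,s_2,s_3}(X) \;\longrightarrow\; \de_{\la,\emptyset}\quad\text{for every }\la\in\Y,
\]
since by Theorem \ref{thm:bqJorth} (and $\Phi^{bqJ}_\emptyset=1$, $h^{bqJ}_\emptyset=1$) the right-hand side equals $\int \Phi^{bqJ}_\la\, dM^{bqJ}_{q,a,b,c,d}$.

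Set $A_\eps(X):=\eps^{|\la|}\Phi^{qR}_\la(X\eps^{-1};q;s_0,s_1,s_2,s_3)\in\Sym_{\le|\la|}$, the prelimit of Theorem \ref{thm:qRbqJ}; thus $A_\eps\to\Phi^{bqJ}_\la(\ccdot;q;a,b,c,d)$ as $\eps\to 0$ in the finite-dimensional vector space $\Sym_{\le|\la|}$. Split
\[
\int \Phi^{bqJ}_\la\,d\widetilde M^{qR}_\eps = \int A_\eps\,d\widetilde M^{qR}_\eps + \int(\Phi^{bqJ}_\la-A_\eps)\,d\widetilde M^{qR}_\eps.
\]
For the first summand, change variables $X=Y\eps$ so that $\widetilde M^{qR}_\eps$ is pulled back to $M^{qR}_{q,\zeta,s_0,s_1,s_2,s_3}$ on configurations on the $q$-Racah grid $\De^{qR}_{M,-M}$. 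Using $A_\eps(Y\eps)=\eps^{|\la|}\Phi^{qR}_\la(Y;q;s_0,s_1,s_2,s_3)$ together with Theorem \ref{thm:qRorth},
\[
\int A_\eps\,d\widetilde M^{qR}_\eps = \eps^{|\la|}\int \Phi^{qR}_\la\,dM^{qR}_{q,\zeta,s_0,s_1,s_2,s_3}=\eps^{|\la|}\de_{\la,\emptyset},
\]
which tends to $\de_{\la,\emptyset}$ as $M\to\infty$ (trivially $0$ for $\la\ne\emptyset$ and equal to $1$ for $\la=\emptyset$).

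The second summand is bounded in absolute value by $\sup_{X\in\Om^{bqJ}}|\Phi^{bqJ}_\la(X)-A_\eps(X)|$. Expand both $A_\eps$ and $\Phi^{bqJ}_\la$ in the finite Schur basis $\{S_\mu:|\mu|\le|\la|\}$ of $\Sym_{\le|\la|}$; convergence in this finite-dimensional space means only finitely many coefficients converge, while each $S_\mu(X)$ is a polynomial in the power sums $p_k(X)$, $k\le|\la|$, which are uniformly bounded on $\Om^{bqJ}$. Convergence of the coefficients therefore implies uniform convergence of evaluations on $\Om^{bqJ}$, so the error integral vanishes in the limit, and the proof is complete.

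The chief subtlety is this last step: upgrading abstract convergence in $\Sym_{\le|\la|}$ to uniform convergence of evaluations on $\Om^{bqJ}$. This hinges essentially on the boundedness of the grid $\De^{bqJ}_{a,b}$, which guarantees that the power sums, and hence all Schur functions of bounded degree, are uniformly bounded functions on the configuration space. Without this boundedness, convergence in the finite-dimensional space would not automatically produce the pointwise (let alone uniform) bounds that the Stone--Weierstrass argument requires.
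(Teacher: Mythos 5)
Your proof is correct and follows essentially the same route as the paper's: both reduce weak convergence to the relations $\langle \wt M^{qR}, \Phi^{bqJ}_\la\rangle\to\de_{\la,\emptyset}$ via Stone--Weierstrass density of $\Sym$ in $C(\Om^{bqJ})$, use the exact identity $\langle \wt M^{qR}, \eps^{|\la|}\Phi^{qR}_\la(\ccdot\,\eps^{-1})\rangle=\de_{\la,\emptyset}$ coming from Theorem \ref{thm:qRorth}, and control the error by upgrading the convergence of Theorem \ref{thm:qRbqJ} in $\Sym_{\le|\la|}$ to uniform convergence of evaluations on $\Om^{bqJ}$ via boundedness of the grid. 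Your write-up merely makes explicit the uniform-convergence step that the paper asserts in one sentence.
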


\begin{proof}
This is a direct consequence of Theorem \ref{thm:qRbqJ}. Indeed, set
$$
\wt\Phi^{qR}_\la(X;q; s_0,s_1,s_2,s_3):=\eps^{|\la|}\Phi^{qR}_\la(X\eps^{-1};q; s_0,s_1,s_2,s_3);
$$
this is the prelimit expression on the right-hand side of \eqref{eqn:qRbqJ}. The measure $\wt M^{qR}_{q,\zeta,s_0,s_1,s_2,s_3}$ is uniquely characterized by the relations
\begin{equation}\label{eq4.B}
\langle \wt M^{qR}_{q,\zeta,s_0,s_1,s_2,s_3}, \; \wt\Phi^{qR}_\la(\ccdot;q; s_0,s_1,s_2,s_3)\rangle=\de_{\la,\emptyset}, \quad \la\in\Y,
\end{equation}
as is seen from the very definition of $\wt M^{qR}_{q,\zeta,s_0,s_1,s_2,s_3}$ and the similar characterization of the measure $M^{qR}_{q,\zeta,s_0,s_1,s_2,s_3}$ in Theorem \ref{thm:qRorth}. 

On the other hand, Theorem \ref{thm:qRbqJ} implies that, as $M\rightarrow\infty$, 
\begin{equation}\label{eq4.B2}
\wt\Phi^{qR}_\la(X;q; s_0,s_1,s_2,s_3) \to \Phi_{\lambda}^{bqJ}(X; q; a, b, c, d), \quad \la\in\Y, 
\end{equation}
in the finite-dimensional space of symmetric functions of degree $\leq |\la|$.
Therefore, since $\Omega^{bqJ}$ is the set of point configurations on the double $q$-grid $\De^{bqJ}_{a, b}$, the limit \eqref{eq4.B2} also holds uniformly for $X$ belonging to $\Omega^{bqJ}$.
Then \eqref{eq4.B} and \eqref{eq4.B2} show that there exists a weak limit
$$
\wt M:=\lim_{M\rightarrow\infty} \wt M^{qR}_{q,\zeta,s_0,s_1,s_2,s_3},
$$
which satisfies 
$$
\langle \wt M, \; \Phi^{bqJ}_\la(\ccdot;q; a,b,c,d)\rangle=\de_{\la,\emptyset}, \quad \la\in\Y.
$$

Finally, applying Theorem \ref{thm:bqJorth} (i), we conclude that $\wt M=M^{bqJ}_{q,a,b,c,d}$. 
\end{proof}

\section{$q$-Meixner symmetric functions}\label{sect5}

\subsection{Univariate $q$-Meixner polynomials}

We define the \emph{$q$-Meixner difference operator} by
$$
D^{qM} := A_+(x)(T_q - 1) + A_-(x)(T_{q^{-1}} - 1),
$$
with
$$
A_+(x) := \left(c - \frac{1}{x}\right)\left(d - \frac{1}{x}\right), \ A_-(x) := -\frac{1}{x} \left(a - \frac{q}{x}\right).
$$
The variable $x$ is complex, and the parameters $a, c, d$ are generic complex numbers. 

This operator is a degeneration of the big $q$-Jacobi operator $D^{bqJ}$ defined in the beginning of Section \ref{sect4}. Namely, 
$$
D^{qM}=\lim_{b\to0}\left(\frac{ab}q D^{bqJ}\right).
$$

One can easily verify
\begin{gather*}
D^{qM}x^n = cd(q^n - 1)x^n + \left( -(c+d)(q^n - 1) - a(q^{-n} - 1) \right)x^{n-1}\\
+ \left( (q^n - 1) + q(q^{-n} - 1) \right)x^{n-2}, \ n = 0, 1, 2, \dots,
\end{gather*}
which shows immediately that $D^{qM}$ preserves $\C[x]$ and the corresponding filtration by degree.
For nonzero $c, d$, the values $cd(q^n - 1)$, $n = 0, 1, 2, \ldots$, are all distinct.
Then, for each $n = 0, 1, \ldots$, there is a unique monic polynomial $\varphi^{qM}_n = \varphi^{qM}_n(x; q; a, c, d)$ of degree $n$ such that
$$D^{qM}\varphi^{qM}_n = cd(q^n - 1)\varphi^{qM}_n.$$
The polynomials $\varphi^{qM}_n$ are the \textit{$q$-Meixner polynomials}; they are determined by the properties previously mentioned, at least for generic $a, c, d$.
Their hypergeometric representation is:
\begin{equation}\label{eqn:qMdef}
\varphi_n^{qM} \left(x ; q; a, c, d \right) =  \left( \frac{cq}{a} ; q \right)_n \left( \frac{a}{cdq^n} \right)^n
{}_2\phi_1\left[\left.\begin{matrix} q^{-n}, \; cx \\ \frac{cq}{a} \end{matrix}\,\right| \frac{d}{a}q^{n+1} \right], \ n = 0, 1, \dots.
\end{equation}
Our chosen normalization for the $q$-Meixner polynomials was so that the following degeneration big $q$-Jacobi $\rightarrow$ $q$-Meixner holds:
$$\lim_{b \rightarrow 0}{\varphi^{bqJ}_n(x; q; a, b, c, d)} = \varphi^{qM}_n(x; q; a, c, d).$$
In the literature, the $q$-Meixner polynomials are typically normalized differently.
For example, with respect to the polynomials $M_n(x; B, C; q)$ in \cite[3.13]{KS}, we have
\begin{equation*}
\varphi^{qM}_n(x; q; a, c, d) = \left( \frac{cq}{a} ; q \right)_n \left( \frac{a}{cdq^n} \right)^n
M_n(cx; c/a, -a/d; q).
\end{equation*}
In terms of the polynomials $m_n(x; B, C; q)$ in \cite{GK}, we have
\begin{equation*}
\varphi^{qM}_n(x; q; a, c, d) = \left( \frac{cq}{a}, \frac{dq}{a} ; q \right)_n \left( \frac{a}{cdq^n} \right)^n m_n(-ax/q; dq/a, cq/a; q).
\end{equation*}

The $q$-Meixner polynomials are related to an indeterminate moment problem, \cite{C}, and hence admit a variety of orthogonality measures, \cite{Ak}. Some concrete examples can be found in \cite[Section 3.13]{KS}  (for the polynomials $M_n(x; B, C; q)$) and in \cite{GK} (for the polynomials $m_n(x; B, C; q)$).
Below, we use the weight functions from \cite{GK} with a minor modification.

\begin{definition}\label{meixneradmissible}
We say that a triple $(a,c,d)$ is \emph{admissible} if $a>0$, and the pair $(c, d)$ satisfies one of the following conditions:
\begin{itemize}
\item $d=\bar c\in\C\setminus\R$;
\item $aq^m < c,d<aq^{m-1}$, for some $m\in\Z$;
\item $c, d<0$ and $q<cd^{-1}<q^{-1}$.
\end{itemize}
\end{definition}

Our constraints on the parameters are similar to those in \cite[Corollary 2.4]{GK}; some difference is caused by the fact that, in our construction, we will require that $(a, cq^{1-N}, dq^{1-N})$ is an admissible triple whenever $(a, c, d)$ is admissible.

In the sequel, we assume that $(a, c, d)$ is an admissible triple.
To any such triple, we attach a one-parameter family of grids of the form 
\begin{equation}\label{eq5.A}
\De_{a,\be} :=\be^{-1} q^\Z \,\sqcup\, a^{-1} q^{\Z_{\ge1}} = \{ \ldots, \be^{-1} q^{-1}, \be^{-1}, \be^{-1} q, \ldots \}  \sqcup \{ \ldots, a^{-1}q^2, a^{-1}q \},
\end{equation}
where $\be<0$ may be arbitrary, unless $c,d<0$ (the third option above), in which case we additionally require that $\be q^{m-1}<c,d< \be q^m$, for some $m\in\Z$.

Obviously, $\De_{a, \be}$ does not change if $\be$ is multiplied by $q^{\pm1}$. Note also that, unlike the grids for $q$-Racah polynomials and big $q$-Jacobi polynomials, the grids $\De_{a, \be}$ are not bounded.

The polynomials $\varphi^{qM}_n(x; q; a, c, d)$ are orthogonal on $\De_{a,\be}$ 
with respect to the weight
$$
w^{qM}(x) := \const|x|\frac{(ax; q)_{\infty}}{(cx, dx; q)_{\infty}}, \quad  \const > 0.
$$
Note that $w^{qM}(x) >0$, for all $x\in\De_{a,\be}$.

\subsection{Construction of the $q$-Meixner symmetric functions}

\begin{definition}\label{df:qMfunction}
For any partition $\lambda$, we define the \textit{$q$-Meixner symmetric function} $\Phi^{qM}_{\lambda}(\ccdot; q; a, c, d)\in\Sym$ by
\begin{equation*}
\Phi^{qM}_{\lambda}(X; q; a, c, d) := \sum_{\mu\subseteq\lambda}
\sigma^{qM}(\lambda, \mu; q; a, c, d) I^A_{\mu}(Xc; q),
\end{equation*}
where $I^A_{\mu}(\ccdot; q) \in \Sym$ are defined in Section $\ref{sec:IA}$ and, for $\mu\subseteq\lambda$,
\begin{multline*}
\sigma^{qM}(\lambda, \mu; q; a, c, d) :=
c^{-|\lambda|} \left( \frac{a}{qd} \right)^{|\lambda| - |\mu|}\\
\times q^{2\{n(\mu') - n(\mu) - n(\lambda') + n(\lambda)\}}
\det \left[ \frac{\left( \frac{c}{a}q^{\mu_k + 2 - k} ; q\right)_{\lambda_j - j - \mu_k + k}}
{\left( q ; q \right)_{\lambda_j - j - \mu_k + k}} \right]_{j, k = 1}^{\ell(\lambda)}.
\end{multline*}
\end{definition}

We can define the multivariate $q$-Meixner polynomials in accordance with \eqref{eq1.B}.
However, we will want the two parameters $(c, d)$ to depend on $N$ as follows:
\begin{equation*}
\varphi^{qM}_{\la\mid N}(x_1, \ldots, x_N; q; a, cq^{1-N}, dq^{1-N})
:= \frac{\det\left[ \varphi^{qM}_{\la_i + N - i}(x_j; q; a, cq^{1-N}, dq^{1-N}) \right]_{i, j=1}^N}{V(x_1, \ldots, x_N)}.
\end{equation*}

\begin{theorem}\label{thm:qMpolyslimit}
For any $\lambda\in\Y$, as $N\to\infty$,
\begin{equation*}
\varphi_{\lambda | N}^{qM}(\ccdot; q; a, cq^{1-N}, dq^{1-N}) \rightarrow \Phi^{qM}_{\lambda}(\ccdot; q; a, c, d),
\end{equation*}
in the sense of Definition $\ref{def2.convergence}$.
\end{theorem}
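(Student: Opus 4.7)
The plan is to mimic the strategy of the proof of Theorem $\ref{thm:qRpolyslimit}$, with the type-$BC$ interpolation basis $\{I^{BC}_{\mu\mid N}(\ccdot; q; s_0)\}$ replaced by the type-$A$ basis $\{I^A_{\mu\mid N}(\ccdot\, c; q)\}_{\mu\in\Y(N)}$ of $\Sym(N)$. This substitution is natural because the hypergeometric representation $(\ref{eqn:qMdef})$ of $\varphi_n^{qM}$ is a ${}_2\phi_1$ whose only $x$-dependence is through the single factor $(cx; q)_m$, and this factor admits the Newton-polynomial expansion
$$(cx; q)_m = (-c)^m q^{\binom{m}{2}}(x\mid c^{-1}, c^{-1}q^{-1}, \dots)^m.$$
After the substitution $c \mapsto cq^{1-N}$ prescribed by the statement, the node sequence becomes $c^{-1}q^{N-1}, c^{-1}q^{N-2}, \dots$, which (up to a scalar $c^m$) matches the Newton nodes used to define $I^A_{\mu\mid N}(\ccdot\, c; q)$.

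Concretely, I would first combine $(\ref{eqn:qMdef})$ with the identity above to write
$$\varphi_n^{qM}(x; q; a, cq^{1-N}, dq^{1-N}) = \sum_{m=0}^n a_N(n,m)\,(x\mid c^{-1}q^{N-1}, c^{-1}q^{N-2}, \dots)^m,$$
with explicit coefficients $a_N(n,m)$ obtained by expanding the ${}_2\phi_1$ series and absorbing the outer prefactors. Next I would apply the Cauchy--Binet formula exactly as in Step~3 of the proof of Theorem $\ref{thm:qRpolyslimit}$ to the $N\times N$ determinant in the numerator of $\varphi^{qM}_{\la\mid N}$. Dividing by the Vandermonde gives the multivariate expansion
$$\varphi^{qM}_{\la\mid N}(x_1, \dots, x_N; q; a, cq^{1-N}, dq^{1-N}) = \sum_{\mu\in\Y(N)} \sigma_N^{qM}(\la, \mu; q; a, c, d)\, I^A_{\mu\mid N}(xc; q),$$
with $\sigma_N^{qM}(\la, \mu; q; a, c, d) = \det[a_N(\la_j + N - j,\ \mu_k + N - k)]_{j,k=1}^N$.

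The third and final step is to take $N\to\infty$. I would invoke Lemma $\ref{lem:matrices}$ to conclude both that $\sigma_N^{qM}(\la, \mu; \dots) = 0$ unless $\mu\subseteq\la$, and that for $\mu\subseteq\la$ the $N\times N$ determinant collapses to an $\ell(\la)\times\ell(\la)$ principal minor times $\prod_{i=1}^N(q; q)_{\la_i + N - i}/(q; q)_{\mu_i + N - i}$; the latter product has only $\ell(\la)$ nontrivial factors and tends to $1$ as $N\to\infty$. Combining this with the convergence $I^A_{\mu\mid N}(\ccdot\, c; q) \to I^A_\mu(\ccdot\, c; q)$ from Proposition $\ref{prop2.C}$ yields the stated limit. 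The main technical obstacle is the bookkeeping in this last step: one must verify that, after pulling out all row-dependent factors (powers of $q$, signs, and products such as $(cq/a; q)_{\ell_j}$ and $(a/(cdq^{\ell_j}))^{\ell_j}$) from the reduced determinant, what remains matches exactly the formula for $\sigma^{qM}(\la, \mu; q; a, c, d)$ in Definition $\ref{df:qMfunction}$, including the prefactor $c^{-|\la|}(a/(qd))^{|\la|-|\mu|}q^{2\{n(\mu') - n(\mu) - n(\la') + n(\la)\}}$. This parallels Step~4 of the proof of Theorem $\ref{thm:qRpolyslimit}$ and is the same kind of computation used in \cite{Ols-2017} to establish the big $q$-Jacobi analogue Theorem $\ref{thm:bqJpolyslimit}$.
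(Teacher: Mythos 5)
Your proposal follows essentially the same route as the paper's own (sketched) proof: expand the univariate $q$-Meixner polynomial $(\ref{eqn:qMdef})$ in the Newton basis attached to the nodes of $I^A_{\mu\mid N}(\ccdot\,c;q)$, apply Cauchy--Binet to obtain $\sigma^{qM}_N$ as an $N\times N$ determinant, and then use Lemma~$\ref{lem:matrices}$ together with Proposition~$\ref{prop2.C}$ to pass to the limit. The remaining bookkeeping you flag is exactly the computation the paper carries out, so the proposal is correct.
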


\begin{proof}
The proof is very similar to the proof of Theorem $\ref{thm:bqJpolyslimit}$ in \cite{Ols-2017} (see also the proof of the more complicated Theorem $\ref{thm:qRpolyslimit}$ above). Thus we only sketch the steps.

From $(\ref{eqn:qMdef})$, for any $\ell, N\in\N$ we have that $\varphi^{qM}_{\ell}(x; q; a, cq^{1-N}, dq^{1-N})$ equals
$$\sum_{m = 0}^{\ell}{ q^{2(N-1)(\ell - m)} \frac{(q; q)_{\ell}}{(q; q)_m} c^{-\ell} \left( \frac{a}{d} \right)^{\ell-m}
q^{m^2 - \ell^2}
\frac{\left( \frac{cq^{m+2-N}}{a}; q \right)_{\ell-m}}{(q; q)_{\ell-m}} }
(cx | q^{N-1}, q^{N-2}, \ldots)^m.$$
This expansion can be generalized to many variables, by the Cauchy-Binet formula; the result is
$$\varphi^{qM}_{\lambda\mid N}(x_1, \ldots, x_N; q; a, cq^{1-N}, dq^{1-N}) = \sum_{\mu\subseteq\lambda}{\sigma^{qM}_N(\lambda, \mu; q; a, c, d) I^A_{\mu\mid N}(cx_1, \ldots, cx_N; q) },$$
where
\begin{gather*}
\sigma^{qM}_N(\lambda, \mu; q; a, c, d)  = c^{\frac{N(N-1)}{2}} q^{2(N-1)(|\la| - |\mu|)}
\prod_{i = 1}^N{\frac{(q; q)_{\lambda_i+N-i}}{(q; q)_{\mu_i+N-i}}}\\
\times\det \left[ c^{-(\lambda_j+N-j)} \left( \frac{a}{d} \right)^{\la_j - j - \mu_k + k} 
q^{(\mu_k + N - k)^2 - (\lambda_j + N - j)^2}
\frac{\left( \frac{c}{a}q^{\mu_k + 2 - k} ; q\right)_{\lambda_j - j - \mu_k + k}}
{\left( q ; q \right)_{\lambda_j - j - \mu_k + k}} \right]_{j, k = 1}^{N}.
\end{gather*}
The result then follows after checking $\sigma^{qM}_N(\lambda, \mu; q; a, c, d) \xrightarrow[N \to \infty]{}\sigma^{qM}(\lambda, \mu; q; a, c, d)$, and using Proposition $\ref{prop2.C}$.
\end{proof}

\subsection{Limit transition: big $q$-Jacobi $\rightarrow$ $q$-Meixner.}
As above, we assume that $(a,c,d)$ is admissible in the sense of Definition \ref{meixneradmissible}. Let $b<0$ be an extra parameter. We want the quadruple $(a,b,c,d)$ to be admissible in the sense of Definition \ref{jacobiadmissible}, so that in the case when $c,d<0$, we impose the additional constraint that $b q^{m-1}<c,d< b q^m$, for some $m\in\Z$. Due to the condition $q<cd^{-1}<q^{-1}$, this constraint will allow to take $b$ arbitrarily small, which is needed in the following theorem.

\begin{theorem}\label{thm:bqJqM}
For any $\lambda\in\Y$,
\begin{equation*}
\lim_{b\rightarrow 0}{ \Phi_{\lambda}^{bqJ}\left(\ccdot; q; a, b, c, d \right) }
= \Phi_{\lambda}^{qM}(\ccdot; q; a, c, d),
\end{equation*}
in the finite-dimensional space of symmetric functions of degree $\le|\la|$.
\end{theorem}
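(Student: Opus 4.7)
The plan is to reduce the statement to a pointwise convergence of coefficients. Both $\Phi^{bqJ}_\la$ and $\Phi^{qM}_\la$ are defined by expansions in the same finite family $\{I^A_\mu(Xc;q) : \mu \subseteq \la\}$, which is independent of $b$. Since $\deg I^A_\mu = |\mu| \le |\la|$, it suffices to prove, for each $\mu \subseteq \la$,
$$
\lim_{b \to 0} \sigma^{bqJ}(\la, \mu; q; a, b, c, d) = \sigma^{qM}(\la, \mu; q; a, c, d).
$$
This gives the claimed convergence in $\Sym_{\le |\la|}$.

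Next I would compare the two determinantal formulas entry by entry. Writing $\ell := \la_j - j - \mu_k + k$, the only $b$-dependent factors in the $(j,k)$-entry of $\sigma^{bqJ}$ are $(cq^{\mu_k + 2 - k}/b; q)_\ell$ in the numerator and $(cdq^{\la_j + \mu_k + 3 - j - k}/(ab); q)_\ell$ in the denominator. From the elementary asymptotic $(y/b;q)_\ell = (-y/b)^\ell q^{\binom{\ell}{2}}(1 + O(b))$ as $b \to 0$, the $b^{-\ell}$-divergences and the $q^{\binom{\ell}{2}}$-contributions cancel between numerator and denominator, and the ratio tends to a finite limit. A short direct calculation yields that the $(j,k)$-entry of $\sigma^{bqJ}$ converges to the corresponding entry of $\sigma^{qM}$ multiplied by the extra factor $(a/d)^\ell q^{-\ell(\la_j - j + \mu_k - k + 2)}$. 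For indices with $\ell < 0$, both the prelimit entry and the $\sigma^{qM}$-entry vanish under the usual convention $1/(q;q)_\ell = 0$.

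The key algebraic observation is that this extra factor separates as a row term (depending only on $j$) times a column term (depending only on $k$). Setting $X := \la_j - j$ and $Y := \mu_k - k$, one has $\ell = X - Y$, hence $(a/d)^\ell = (a/d)^X (d/a)^Y$, while the identity $(X - Y)(X + Y + 2) = X(X + 2) - Y(Y + 2)$ splits the $q$-exponent analogously. Pulling the row and column factors out of the determinant reproduces the $\sigma^{qM}$-determinant times a scalar prefactor, which is a product over $j, k = 1, \dots, \ell(\la)$. The final and main bookkeeping step is to verify that this scalar equals $(a/(qd))^{|\la| - |\mu|} q^{2\{n(\mu') - n(\mu) - n(\la') + n(\la)\}}$, using the standard identities $\sum \la_j = |\la|$, $\sum j\la_j = n(\la) + |\la|$, $\sum \la_j^2 = 2n(\la') + |\la|$ (and the analogous sums for $\mu$, viewed as a partition of length $\le \ell(\la)$); the purely $\ell(\la)$-dependent sums $\sum j$ and $\sum j^2$ cancel between the row and column contributions. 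No genuine obstacle appears; the only delicate point is the careful tracking of $q$-exponents in this final computation.
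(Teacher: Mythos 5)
Your proposal is correct and follows essentially the same route as the paper's proof: reduce to the entrywise limit $\lim_{b\to0}\sigma^{bqJ}=\sigma^{qM}$, compute the limit of the ratio of the two $b$-dependent Pochhammer symbols to get the extra factor $(a/d)^{\ell}q^{-\ell(\la_j-j+\mu_k-k+2)}$, separate it into row and column contributions (your identity $(X-Y)(X+Y+2)=X(X+2)-Y(Y+2)$ is exactly the paper's rewriting of the exponent as $(k-\mu_k-1)^2-(j-\la_j-1)^2$), and finish with the $n(\la),n(\la')$ bookkeeping, which you carry out correctly. No gaps.
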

\begin{proof}
From Definitions $\ref{df:bqJfunction}$ and $\ref{df:qMfunction}$, it suffices to show that
$$
\lim_{b\rightarrow 0}{\sigma^{bqJ}(\lambda, \mu; q; a, b, c, d)} = \sigma^{qM}(\lambda, \mu; q; a, c, d), \quad \mu\subseteq\lambda.
$$
For $1 \leq j, k \leq \ell(\la)$, we have
$$
\lim_{b\rightarrow 0}{\frac{\left( \frac{c}{b}q^{\mu_k+2-k} ; q\right)_{\la_j - j - \mu_k + k}}{\left( \frac{cd}{ab}q^{\la_j+\mu_k+3-j-k} ; q\right)_{\la_j - j - \mu_k + k}}} = 
\left( \frac{a}{d}q^{j-1-\la_j} \right)^{\lambda_j-j-\mu_k+k}.
$$
Therefore the limit $\lim_{b\rightarrow 0}{\sigma^{bqJ}(\lambda, \mu; q; a, b, c, d)}$ exists and equals
\begin{equation}\label{eqn:limitb0}
c^{-|\lambda|} \left( \frac{a}{d} \right)^{|\la| - |\mu|} \det
\left[ \frac{  q^{(j-1-\la_j)(\lambda_j-j-\mu_k+k)}\left( \frac{c}{a}q^{\mu_k + 2 - k} ; q\right)_{\lambda_j - j - \mu_k + k}}
{q^{(\mu_k + 1 - k)(\lambda_j - j - \mu_k + k)}
\left( q; q \right)_{\lambda_j - j - \mu_k + k}} \right]_{j, k = 1}^{\ell(\lambda)}.
\end{equation}
Simple calculations show
$$\frac{q^{(j-1-\la_j)(\la_j-j-\mu_k+k)}}{q^{(\mu_k+1-k)(\la_j-j-\mu_k+k)}} = q^{(j-\la_j+k-\mu_k-2)(\la_j-j-\mu_k+k)} = q^{(k-\mu_k-1)^2 - (j-\la_j-1)^2}$$
and
\begin{gather*}
\sum_{k = 1}^{\ell(\la)}{(k-\mu_k-1)^2} - \sum_{j = 1}^{\ell(\la)}{(j-\la_j-1)^2} =
|\mu| - |\la| + 2\{n(\mu') - n(\mu) - n(\la') + n(\la)\}
\end{gather*}
by using $n(\la) = \sum_{i=1}^{\ell(\la)}{(i-1)\la_i}$, $n(\la') = \sum_{i=1}^{\ell(\la)}{{\la_i \choose 2}}$. Therefore, $(\ref{eqn:limitb0})$ equals
$$
c^{-|\lambda|} \left( \frac{a}{d} \right)^{|\la| - |\mu|} q^{|\mu| - |\la| + 2\{ n(\mu') - n(\mu) - n(\la') + n(\la) \}}
\det\left[ \frac{  \left( \frac{c}{a}q^{\mu_k + 2 - k} ; q\right)_{\lambda_j - j - \mu_k + k}}
{\left( q; q \right)_{\lambda_j - j - \mu_k + k}} \right]_{j, k = 1}^{\ell(\lambda)},
$$
and this last expression is exactly $\sigma^{qM}(\la, \mu; q; a, c, d)$.
\end{proof}

\subsection{Formal orthogonality}
We abbreviate $\Phi^{qM}_{\la} := \Phi^{qM}_{\la}(\ccdot; q; a, c, d)$. Associated with the basis $\{ \Phi^{qM}_{\la} \}_{\la\in\Y}$ of $\Sym$ is the moment functional defined by 
$$
\E(\Phi^{qM}_{\la}) := \delta_{\la, \emptyset}, \quad \la\in\Y,
$$
and the inner product  $(F, G):= \E(FG)$.

\begin{theorem}\label{thm:formalqM}
Let $(a,c,d)$ be admissible in the sense of Definition \ref{meixneradmissible}. We have
$$
\left(\Phi^{qM}_{\la}, \Phi^{qM}_{\mu} \right) = \delta_{\la, \mu}h_{\la}^{qM}(q; a, c, d),
$$
where
\begin{equation}\label{eqn:hqM}
h_{\la}^{qM}(q; a, c, d) := q^{4 \{ n(\la') - n(\la) \}} \left( \frac{a^2}{c^2d^2q^3} \right)^{|\la|} 
\left(\frac{c q}{a}, \frac{d q}{a}; q\right)_{\la}.
\end{equation}
\end{theorem}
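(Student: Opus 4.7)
The plan is to adapt the strategy used for Theorem \ref{thm:formalorth} ($q$-Racah formal orthogonality) to the $q$-Meixner setting, approximating the moment functional $\E$ on $\Sym$ by a sequence of $N$-variate moment functionals on $\Sym(N)$. Define $\E_N : \Sym(N) \to \C$ by
$$
\E_N(\varphi^{qM}_{\la\mid N}(\ccdot; q; a, cq^{1-N}, dq^{1-N})) := \de_{\la, \emptyset}, \qquad \la \in \Y(N);
$$
this is well-defined since the $N$-variate $q$-Meixner polynomials form a basis of $\Sym(N)$. By Theorem \ref{thm:qMpolyslimit}, $\varphi^{qM}_{\la\mid N} \to \Phi^{qM}_\la$, and the generic argument invoked in Step 2 of the proof of Theorem \ref{thm:formalorth} (compare \cite[proof of Theorem 4.1 (c)]{Ols-2017}) shows that the expansion coefficients of the product $\varphi^{qM}_{\la\mid N}\varphi^{qM}_{\mu\mid N}$ in the basis $\{\varphi^{qM}_{\nu\mid N}\}_{\nu\in\Y(N)}$ converge to those of $\Phi^{qM}_\la \Phi^{qM}_\mu$ in $\{\Phi^{qM}_\nu\}_{\nu\in\Y}$. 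Hence $\E(\Phi^{qM}_\la \Phi^{qM}_\mu) = \lim_{N\to\infty} \E_N(\varphi^{qM}_{\la\mid N}\varphi^{qM}_{\mu\mid N})$.

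Next, I would evaluate $\E_N(\varphi^{qM}_{\la\mid N}\varphi^{qM}_{\mu\mid N})$ for admissible parameters. For $(a,c,d)$ admissible, the univariate $q$-Meixner polynomials are orthogonal on $\De_{a,\be}$ with respect to the positive weight $w^{qM}$, and the shifted triple $(a, cq^{1-N}, dq^{1-N})$ remains admissible after a compensating rescaling of $\be$. Writing $h^{qM}_n(q; a, c, d)$ for the univariate squared norms, the standard determinantal argument (identical to the proof of Proposition \ref{prop:multivariateqR}) yields
$$
\E_N(\varphi^{qM}_{\la\mid N}\varphi^{qM}_{\mu\mid N}) = \de_{\la, \mu} \prod_{j=1}^N \frac{h^{qM}_{\la_j+N-j}(q; a, cq^{1-N}, dq^{1-N})}{h^{qM}_{N-j}(q; a, cq^{1-N}, dq^{1-N})}.
$$
Since both sides are rational in the parameters, the identity persists for generic complex $(a,c,d)$, and the case $\la\ne\mu$ of the theorem follows immediately upon letting $N\to\infty$.

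It remains to verify the squared-norm formula \eqref{eqn:hqM} when $\la=\mu$. The univariate squared norm $h^{qM}_n(q; a, c, d)$ admits a closed form in $q$-Pochhammer symbols (derivable either directly from the univariate orthogonality or, more efficiently, as the $b\to 0$ limit of $h^{bqJ}_n$). After substituting $c\mapsto cq^{1-N}$, $d\mapsto dq^{1-N}$, the ratio $\prod_{j=1}^N h^{qM}_{\la_j+N-j}/h^{qM}_{N-j}$ telescopes: the Pochhammer factors of the form $(cq^{m+2-N}/a; q)_{\ell-m}$ stabilize thanks to the relation $(\cdot q^N;q)_{\mathrm{fixed}} \to 1$, while the powers-of-$q$ prefactor $(a/(cd q^n))^n$ in \eqref{eqn:qMdef} accumulates over the boxes of $\la$ via the standard identities $\sum_i \la_i = |\la|$, $\sum_i (i-1)\la_i = n(\la)$, $\sum_i \binom{\la_i}{2} = n(\la')$, producing exactly the factor $q^{4\{n(\la')-n(\la)\}}(a^2/(c^2d^2q^3))^{|\la|}(cq/a, dq/a; q)_\la$.

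The principal difficulty lies in this last step: the bookkeeping of asymptotic contributions from many Pochhammer symbols and powers of $q$, complicated by the fact that the prefactor $(a/(cdq^n))^n$ in \eqref{eqn:qMdef} is quadratic (rather than linear) in $n$, giving rise to the exponent $q^{4\{n(\la')-n(\la)\}}$ that is twice as large as in the $q$-Racah case. A useful cross-check — and a possible shortcut — is to use the degeneration Theorem \ref{thm:bqJqM} together with Remark \ref{rem4.A}: the factors $(cq/b; q)_\la$ and $(dq/b; q)_\la$ in $h^{bqJ}_\la$ diverge as $b\to 0$, but their divergence is cancelled exactly against that of $(cdq^2/(ab); q)_{\widehat\la}$ in the denominator of \eqref{eqn:hbqJ}, leaving precisely the right-hand side of \eqref{eqn:hqM}. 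This route sidesteps the asymptotic $N$-analysis and makes the source of the quartic $q$-exponent transparent.
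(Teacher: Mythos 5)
Your proposal is correct in substance, but it is worth noting that the ``possible shortcut'' you mention in your last paragraph is in fact the paper's entire proof: the authors obtain $\left(\Phi^{qM}_{\la}, \Phi^{qM}_{\mu}\right)=\lim_{b\to0}\left(\Phi^{bqJ}_{\la}, \Phi^{bqJ}_{\mu}\right)$ from Theorem \ref{thm:bqJqM} and then compute the $b\to0$ limit of $h^{bqJ}_\la$ from \eqref{eqn:hbqJ} directly. Your primary route --- defining $\E_N$ on $\Sym(N)$, using the multivariate orthogonality on the grid $\De_{a,\be}$ as in Proposition \ref{prop:multivariateqR}, and passing to the limit of the products of univariate norms --- is a genuinely different argument, parallel to the paper's own treatment of the $q$-Racah case in Theorem \ref{thm:formalorth}; it is sound in outline (the shifted triple $(a,cq^{1-N},dq^{1-N})$ does stay admissible, and the unbounded grid causes no trouble since the weight decays superpolynomially), but its decisive step --- the explicit univariate norm $h^{qM}_n$ and the $N\to\infty$ asymptotics of $\prod_j h^{qM}_{\la_j+N-j}/h^{qM}_{N-j}$ --- is only asserted to ``telescope'' to \eqref{eqn:hqM}, which is precisely the content of the theorem; as written this is a sketch, not a proof. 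The degeneration route avoids all of that bookkeeping, which is presumably why the paper adopts it. One small imprecision in your description of that route: the divergence of $\left(\frac{cq}{b},\frac{dq}{b};q\right)_{\la}$ (of order $b^{-2|\la|}$) is not cancelled by $\left(\frac{cdq^2}{ab};q\right)_{\widehat\la}$ alone (which diverges like $b^{-4|\la|}$, since $|\widehat\la|=4|\la|$); the balance also requires the factor $b^{-2|\la|}$ hidden in $\left(\frac{cdq^3}{a^2b^2}\right)^{|\la|}$, and it is the combination of all three that has a finite nonzero limit.
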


Note that $(\Phi^{qM}_{\la}, \Phi^{qM}_\la)=h_{\la}^{qM}(q; a, c, d)>0$, for any $\la\in\Y$, so that the inner product $(F, G)= \E(FG)$ in $\Sym$ is positive.

\begin{proof}
It follows from Theorem $\ref{thm:bqJqM}$ (and the definitions of the moment functionals for the big $q$-Jacobi and $q$-Meixner bases) that
$$\left(\Phi^{qM}_{\la}, \Phi^{qM}_{\mu} \right)= \lim_{b \rightarrow 0} \left(\Phi^{bqJ}_{\la}, \Phi^{bqJ}_{\mu} \right).$$
From Theorem \ref{thm:bqJorth} (ii),
\begin{equation}\label{normbqJ}
\left(\Phi^{bqJ}_{\la}, \Phi^{bqJ}_{\mu} \right) = q^{2 \{ n(\la') - n(\la) \}} 
\frac{ \left(\frac{c q}{a}, \frac{c q}{b}, \frac{d q}{a}, \frac{d q}{b}; q\right)_{\la}}{ \left( \frac{cdq^2}{ab}; q \right)_{\widehat{\la}}} \left( \frac{cd q^3}{a^2 b^2} \right)^{|\la|}.
\end{equation}
It remains to take a limit of the last expression when $b \rightarrow 0$.
One can verify $(z; q)_{\widehat{\la}} = \prod_{i=1}^{\ell(\la)}(zq^{1-2i}, zq^{2-2i}; q)_{2\la_i}$.
The part of $(\ref{normbqJ})$ that depends on $b$ can be expanded:
$$\frac{\left( \frac{cq}{b}, \frac{dq}{b} ; q \right)_{\la}}{\left( \frac{cdq^2}{ab} ; q \right)_{\widehat{\la}}} \left( \frac{1}{b^2} \right)^{|\la|} = \prod_{i = 1}^{\ell(\la)}{ \frac{(cq^{2-i}/b; q)_{\la_i}(dq^{2-i}/b; q)_{\la_i}}{(cdq^{3-2i}/ab; q)_{2\la_i}(cdq^{4-2i}/ab; q)_{2\la_i}b^{2\la_i}} }.$$
Clearly, the last expression has a limit as $b \rightarrow 0$, which is equal to
$$\prod_{i = 1}^{\ell(\la)}{ \frac{(cdq^{4-2i})^{\la_i}q^{2{\la_i \choose 2}}}{(c^2d^2q^{7-4i}/a^2)^{2\la_i}q^{2{2\la_i \choose 2}}} }
= \left( \frac{a^4}{c^3d^3} \right)^{|\la|}q^{\sum_{i=1}^{\ell(\la)}{2{\la_i \choose 2} - 2{2\la_i \choose 2} + (6i - 10)\la_i}}.$$
Therefore, back into $(\ref{normbqJ})$, we obtain
$$\lim_{b \rightarrow 0} \left(\Phi^{bqJ}_{\la}, \Phi^{bqJ}_{\mu} \right) =
q^{2\{n(\la') - n(\la)\}} \left( \frac{cq}{a}, \frac{dq}{a} ; q\right)_{\la} \left( \frac{a^2 q^3}{c^2d^2} \right)^{|\la|}q^{\sum_{i=1}^{\ell(\la)}{2{\la_i \choose 2} - 2{2\la_i \choose 2} + (6i - 10)\la_i}}$$
which can be verified to be equal to the right-hand side of $(\ref{eqn:hqM})$, by use of the identities $n(\la) = \sum_{i=1}^{\ell(\la)}{(i-1)\la_i}$ and $n(\la') = \sum_{i=1}^{\ell(\la)}{{\la_i \choose 2}}$.
\end{proof}

\subsection{Orthogonality measures}

The next theorem is stated without proof; details will be given in a separate paper. 

Fix an admissible triple $(a,c,d)$ {\rm(}Definition \ref{meixneradmissible}{\rm)}, take one of the grids   $\De_{a,\be}$ defined by \eqref{eq5.A}, and denote by $\Omega^{qM}$ the set of all point configurations on $\De_{a,\be}$ that are bounded away from $-\infty$.  Note that  symmetric functions may be evaluated at arbitrary configurations $X\in\Omega^{qM}$.

\begin{theorem}\label{thm5.A}

{\rm(i)} Let $b<0$ be an extra parameter of the form $b=\be q^M$, with $M\in\Z$.   
In the limit regime as $M \rightarrow+\infty$, the probability measures $M^{bqJ}_{q, a, b, c, d}$ defined in Theorem $\ref{thm:bqJorth}$ converge weakly to a probability measure $M^{qM}_{q, a, \be,  c, d}$ on $\Omega^{qM}$. 

{\rm(ii)} The $q$-Meixner symmetric functions $\Phi^{qM}_{\lambda} = \Phi_{\lambda}^{qM}(\ccdot; q; a, c, d)$ are square integrable with respect to $M^{qM}_{q, a, \be, c, d}$ and satisfy the orthogonality relations 
$$
\left( \Phi_{\lambda}^{qM}, \Phi_{\mu}^{qM} \right)_{L^2 \left(\Omega^{qM}, M^{qM}_{q, a, \be, c, d} \right)}
= \delta_{\lambda, \mu}h_{\la}^{qM}(q; a, c, d),
$$
where $\la,\mu\in\Y$ and $h_{\la}^{qM}(q; a, c, d)$ was defined in $(\ref{eqn:hqM})$.
\end{theorem}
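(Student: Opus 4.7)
The plan is to parallel the weak-convergence argument at the end of Section \ref{sect4} (the measure-level companion of Theorem \ref{thm:qRbqJ}), but with a genuinely new tightness step. Unlike that earlier case where $\Omega^{bqJ}$ was compact (sitting inside a bounded double $q$-grid), here the grid $\Delta_{a,b}$ extends further and further to $-\infty$ as $M\to+\infty$ --- its leftmost node $b^{-1}q=\be^{-1}q^{1-M}$ diverges --- so we must rule out escape of mass before we can extract any limit.

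First I would prove tightness. For each $T>0$ set $\Omega^{qM}_T:=\{X\in\Omega^{qM}:X\subset[-T,\infty)\}$ (a compact subset of $\Omega^{qM}$), and aim for
$$
\lim_{T\to\infty}\sup_{M\ge 0} M^{bqJ}_{q,a,\be q^M,c,d}(\Omega^{qM}\setminus\Omega^{qM}_T)=0.
$$
Since $M^{bqJ}_{q,a,b,c,d}$ is determinantal (as announced in the introduction), this reduces to a uniform bound on the trace of its correlation kernel in the far-left tail of $\Delta_{a,b}$. The weight $w^{bqJ}(x)\propto |x|\,(ax,bx;q)_\infty/(cx,dx;q)_\infty$, evaluated at tail nodes $x=b^{-1}q^k$ with $k\ge 1$ but $k-M\ll 0$, should provide the needed decay once propagated through the Christoffel--Darboux formula for the kernel, uniformly in $M$.

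Granted tightness, by Prokhorov's theorem every subsequence of $\{M^{bqJ}_{q,a,\be q^M,c,d}\}_{M\ge 0}$ has a further weakly convergent sub-subsequence with limit $\widetilde M$ on $\Omega^{qM}$. From Theorem \ref{thm:bqJorth} one has $\langle M^{bqJ}_{q,a,b,c,d},\Phi^{bqJ}_\la(\ccdot;q;a,b,c,d)\rangle=\de_{\la,\emptyset}$, while Theorem \ref{thm:bqJqM} gives $\Phi^{bqJ}_\la\to\Phi^{qM}_\la$ in $\Sym$ (hence uniformly on each compact $\Omega^{qM}_T$). Combining these and using tightness to dispose of the tail contribution yields
$$
\langle\widetilde M,\Phi^{qM}_\la(\ccdot;q;a,c,d)\rangle=\de_{\la,\emptyset},\qquad \la\in\Y.
$$
Since $\{\Phi^{qM}_\la\}$ is an algebraic basis of $\Sym$, this determines $\E_{\widetilde M}(F)$ for every $F\in\Sym$, and together with tightness it pins down $\widetilde M$ uniquely; the full family therefore converges to a common limit, which we call $M^{qM}_{q,a,\be,c,d}$, proving (i). For (ii), Theorem \ref{thm:bqJorth}(ii) gives $\int\Phi^{bqJ}_\la\Phi^{bqJ}_\mu\,dM^{bqJ}=\de_{\la,\mu}h^{bqJ}_\la$, and the computation already carried out in the proof of Theorem \ref{thm:formalqM} shows $h^{bqJ}_\la(q;a,b,c,d)\to h^{qM}_\la(q;a,c,d)$ as $b\to 0$. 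Truncating to $\Omega^{qM}_T$ (on which $\Phi^{qM}_\la\Phi^{qM}_\mu$ is bounded) and then using tightness together with the uniform boundedness in $M$ of $h^{bqJ}_\la$ as a uniform-integrability input, one passes to the limit to obtain $\int\Phi^{qM}_\la\Phi^{qM}_\mu\,dM^{qM}=\de_{\la,\mu}h^{qM}_\la$; square integrability of $\Phi^{qM}_\la$ follows from finiteness of $h^{qM}_\la$.

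The main obstacle, as indicated, is the tightness of the first step. The very fact that the limit $M^{qM}_{q,a,\be,c,d}$ genuinely depends on $\be$ --- reflecting the indeterminacy of the classical moment problem for the $q$-Meixner polynomials --- shows that the measure truly feels the grid $\Delta_{a,\be}$, so tightness cannot be bypassed by a purely formal or algebraic moment-matching argument. The natural route is through the explicit basic-hypergeometric form of the determinantal correlation kernel of $M^{bqJ}$, whose uniform asymptotic analysis as $b\to 0^-$ along the prescribed grid is precisely the technical content the authors defer to a future publication.
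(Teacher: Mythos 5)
First, a point of reference: the paper does not prove this theorem. It is explicitly announced without proof (``The next theorem is stated without proof; details will be given in a separate paper,'' and the introduction likewise defers Theorems \ref{thm5.A} and \ref{thm6.B} to a future publication), so there is no in-paper argument to compare yours against; your proposal has to stand on its own.

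On its own terms, your outline identifies the right difficulty --- loss of compactness of the configuration space as $b=\be q^M\to 0^-$ --- but it has genuine gaps beyond the tightness estimate that you yourself defer. The most serious is the uniqueness of the subsequential limit. On the compact spaces $\Om^{qR}$ and $\Om^{bqJ}$, the relations $\langle M,\Phi_\la\rangle=\de_{\la,\emptyset}$ determine $M$ via Stone--Weierstrass; on the non-compact $\Om^{qM}$ the images of symmetric functions are unbounded, Stone--Weierstrass is unavailable, and prescribing all $\Sym$-moments does not pin down a probability measure --- this is precisely the indeterminacy phenomenon you invoke two sentences later to explain why the limit depends on $\be$. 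As written, the argument is internally inconsistent: you cannot simultaneously use moment-matching to force all subsequential limits to coincide and cite moment-indeterminacy to explain the $\be$-dependence. Identifying the limit requires something finer (e.g.\ convergence of the correlation functions of the determinantal measures), which is presumably the content of the deferred proof. A second gap is in part (ii): you pass to the limit in $\int\Phi^{bqJ}_\la\Phi^{bqJ}_\mu\,dM^{bqJ}$ using ``uniform boundedness of $h^{bqJ}_\la$'' as a uniform-integrability input, but boundedness of the second moments of $\Phi_\la$ does not give uniform integrability of the unbounded products $\Phi_\la\Phi_\mu$; one would need uniform control of $\E\bigl((\Phi^{bqJ}_\la\Phi^{bqJ}_\mu)^2\bigr)$, extractable in principle from the formal orthogonality relations but not addressed here. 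Combined with the fact that the tightness bound --- correctly flagged as the crux --- is only described, not carried out, the proposal is a plausible roadmap rather than a proof, consistent with the authors' own decision to postpone the argument.
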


Thus, given an admissible triple $(a,c,d)$, we may exhibit a family of orthogonality measures for $\{\Phi_{\lambda}^{qM}(\ccdot; q; a, c, d):\la\in\Y\}$, depending on a continuous parameter $\be$, just as in the case of univariate $q$-Meixner polynomials. 

\section{Al-Salam--Carlitz symmetric functions}\label{sect6}

We abbreviate `Al-Salam--Carlitz' by ASC.

\subsection{Univariate ASC polynomials}

There are two versions of the ASC polynomials, \cite{ASC}, \cite[3.24--3.25]{KS}, each of which transforms into the other via the involution $q \mapsto q^{-1}$.
We are interested in the polynomials that are known in the literature as ASC II polynomials.
Since the ASC I polynomials are not present in this text, we omit the II from their name and simply call them the ASC polynomials.

\smallskip

The \emph{ASC $q$-difference operator} is obtained from the $q$-Meixner operator by letting the parameter $a$ go to $0$:
$$
D^{\ASC} := \left( c - \frac{1}{x} \right) \left( d - \frac{1}{x} \right) (T_q - 1) + \frac{q}{x^2} (T_{q^{-1}} - 1).
$$
It acts on $\C[x]$, preserving its filtration by degree, as it can be seen from:
\begin{multline}\label{eqn:ASCaction}
D^{\ASC}x^n = cd(q^n - 1)x^n + \left( c - d \right)(q^n - 1)x^{n-1}\\
+ \left\{ -(q^n - 1) - q(q^{-n} - 1) \right\}x^{n-2}, \ n = 0, 1, \ldots.
\end{multline}
Since the quantities $cd(q^{n} - 1)$, $n = 0, 1, 2, \ldots$, are pairwise distinct for nonzero $c, d$, there exist monic polynomials $\varphi^{\ASC}_n = \varphi^{\ASC}_n(x; q; c, d)$ with $\deg \varphi^{\ASC}_n = n$, such that
$$
D^{\ASC}\varphi^{\ASC}_n = cd(q^n - 1)\varphi^{\ASC}_n, \ n = 0, 1, \ldots\,.
$$
The polynomials $\varphi^{\ASC}_n$ are uniquely determined by these conditions.
They are called the \emph{Al-Salam--Carlitz} polynomials.
Their hypergeometric representation is
\begin{equation}\label{ASCpolys}
\varphi^{\ASC}_n(x; q; c, d) = (-1)^n d^{-n} q^{-{n \choose 2}}
{}_2\phi_0\left[\left.\begin{matrix} q^{-n}, \; cx \\ - \end{matrix}\,\right| -\frac{d}{c}q^{n} \right], \ n = 0, 1, \dots.
\end{equation}
Our presentation was so that the following degeneration $q$-Meixner $\rightarrow$ ASC holds:
$$\lim_{a\rightarrow 0}{\varphi^{qM}_n(x; q; a, c, d)} = \varphi^{\ASC}_n(x; q; c, d).$$
In terms of the classical ASC polynomials $V_n^{(a)}(x; q)$, see \cite[3.25]{KS}, we have
\begin{equation*}
\varphi^{\ASC}_n(x; q; c, d) = (-1)^n c^{-n} V_n^{(-c/d)}(cx; q).
\end{equation*}

Like the $q$-Meixner polynomials, the ASC polynomials are related to an indeterminate moment problem and admit a variety of orthogonality measures, see \cite{C}. Below, we exhibit a particular two-parameter family of discrete orthogonality measures.

\begin{definition}\label{def:ASC}
We say that a pair $(c,d)$ of parameters is \emph{admissible} if it satisfies one of the following two conditions:
\begin{itemize}
\item $d=\bar c\in\C\setminus\R$;
\item $c$ and $d$ are real, of the same sign, and $q<cd^{-1}<q^{-1}$
\end{itemize}
(cf. Definition \ref{meixneradmissible}).
\end{definition}

Let $(\al,\be)$ be two parameters such that $\al>0$, $\be<0$; next, if $c,d>0$, then we additionally require that $\al q^m<c,d< \al q^{m-1}$, for some $m\in\Z$; likewise, if $c,d<0$, then we require that $\be q^{m-1}<c,d< \be q^m$, for some $m\in\Z$.
Given such a pair $(\al,\be)$, we consider the grid
\begin{equation}\label{eq6.A}
\De_{\al,\be} :=\be^{-1}q^\Z\sqcup \al^{-1} q^\Z= \{ \ldots, \be^{-1}q^{-1}, \be^{-1}, \be^{-1} q, \ldots \} \sqcup \{ \ldots, \al^{-1} q, \al, \al^{-1} q^{-1}, \ldots \}. 
\end{equation}
The polynomials $\varphi^{\ASC}_n(x;c,d)$ are orthogonal on $\De_{\al,\be}$ with respect to the weight function
$$
w^{\ASC}(x) := \const\frac{|x|}{(cx, dx; q)_{\infty}}, \quad  \const > 0.
$$
Note that $w^{\ASC}(x)>0$, for all $x\in\De_{\al,\be}$.

\subsection{Construction of the ASC symmetric functions}

In what follows, we assume that $(c,d)$ is admissible (Definition \ref{def:ASC}).

\begin{definition}\label{df:ASCfunction}
For any partition $\lambda$, define the \textit{Al-Salam-Carlitz symmetric function} $\Phi^{\ASC}_{\lambda}(\ccdot; q; c, d)\in\Sym$ by
\begin{equation*}
\Phi^{\ASC}_{\lambda}(X; q; c, d) := \sum_{\mu\subseteq\lambda}
\sigma^{\ASC}(\lambda, \mu; q; c, d) I^A_{\mu}(Xc; q),
\end{equation*}
where $I^A_{\mu}(\ccdot; q) \in \Sym$ are defined in Section $\ref{sec:IA}$ and, for $\mu\subseteq\lambda$,
$$
\sigma^{\ASC}(\lambda, \mu; q; c, d) := c^{-|\la|} \left( -\frac{c}{d} \right)^{|\la| - |\mu|}
q^{n(\mu') - n(\mu) - n(\lambda') + n(\lambda)}
\left[ \frac{1}{\left( q; q \right)_{\lambda_j - j - \mu_k + k}} \right]_{j, k = 1}^{\ell(\lambda)}.
$$
\end{definition}

The ASC symmetric functions are approximated by the multivariate ASC multivariate polynomials of type \eqref{eq1.B}, with the parameters varying as $N\to\infty$:$$
\varphi^{\ASC}_{\la\mid N}(x_1, \ldots, x_N; q; cq^{1-N}, dq^{1-N}) :=
\frac{\det[ \varphi^{\ASC}_{\la_i + N - i}(x_j; q; cq^{1-N}, dq^{1-N}) ]_{i, j = 1}^N}{V(x_1, \ldots, x_N)}.
$$

\begin{theorem}\label{thm:ASCpolyslimit}
For any $\lambda\in\Y$, as $N \rightarrow \infty$,
\begin{equation*}
\varphi_{\lambda | N}^{\ASC}(\ccdot; q; cq^{1-N}, dq^{1-N}) \rightarrow \Phi^{\ASC}_{\lambda}(\ccdot; q; c, d),
\end{equation*}
in the sense of Definition $\ref{def2.convergence}$.
\end{theorem}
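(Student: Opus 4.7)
The plan is to follow the template of Theorem \ref{thm:qMpolyslimit} (which itself mirrors Theorem \ref{thm:qRpolyslimit}). The first step is to expand each univariate polynomial $\varphi^{\ASC}_{\ell}(x; q; cq^{1-N}, dq^{1-N})$ in the Newton-type basis $\{(cx \mid q^{N-1}, q^{N-2}, \ldots)^m\}_{m \geq 0}$. Starting from the hypergeometric representation \eqref{ASCpolys}, and using the identity $(cxq^{1-N};q)_m = (-1)^m q^{-m(N-1)+\binom{m}{2}}(cx \mid q^{N-1}, q^{N-2}, \ldots)^m$ together with the standard evaluation of $(q^{-\ell};q)_m$, one obtains
\begin{equation*}
\varphi^{\ASC}_\ell(x; q; cq^{1-N}, dq^{1-N}) = \sum_{m=0}^\ell e_N(\ell, m)\,(cx \mid q^{N-1}, q^{N-2}, \ldots)^m,
\end{equation*}
with explicit coefficients $e_N(\ell, m)$ that are products of elementary factors in $c$, $d$, and $q$.

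Next, I would apply the Cauchy--Binet formula, together with the scaling $V(cx_1, \ldots, cx_N) = c^{N(N-1)/2}V(x_1, \ldots, x_N)$, to lift this to the multivariate expansion
\begin{equation*}
\varphi^{\ASC}_{\la|N}(x_1, \ldots, x_N; q; cq^{1-N}, dq^{1-N}) = \sum_{\mu\in\Y(N)} \sigma^{\ASC}_N(\la, \mu; q; c, d)\, I^A_{\mu|N}(cx_1, \ldots, cx_N; q),
\end{equation*}
where $\sigma^{\ASC}_N(\la, \mu; q; c, d) = c^{N(N-1)/2}\det[e_N(\la_j+N-j,\mu_k+N-k)]_{j,k=1}^N$. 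Pulling out row- and column-specific factors from this determinant, the remaining matrix has entries $1/(q;q)_{\la_j - j - \mu_k + k}$; Lemma \ref{lem:matrices} then implies that $\sigma^{\ASC}_N(\la,\mu) = 0$ unless $\mu\subseteq\la$, and for $\mu\subseteq\la$ the $N\times N$ determinant collapses to the $\ell(\la)\times\ell(\la)$ determinant appearing in Definition \ref{df:ASCfunction}.

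The last step is to verify $\sigma^{\ASC}_N(\la, \mu) \to \sigma^{\ASC}(\la, \mu)$ as $N\to\infty$ for every $\mu\subseteq\la$. The Pochhammer ratio $\prod_i (q;q)_{\la_i + N - i}/\prod_i (q;q)_{\mu_i + N - i}$ tends to $1$, while the remaining $c$-, $d$-, and $q$-powers must be shown to collapse to the target. The essential identity is
\begin{equation*}
\sum_{j=1}^{N}\binom{\la_j + N - j}{2} = n(\la') - n(\la) + (N-1)|\la| + \binom{N}{3},
\end{equation*}
together with its analogue for $\mu$, which forces the $N$-dependent $q$-powers to cancel between the $\la$- and $\mu$-sums, leaving behind the expected $q^{n(\mu')-n(\mu)-n(\la')+n(\la)}$. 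Once this convergence is established, Proposition \ref{prop2.C}, which furnishes $I^A_{\mu|N} \to I^A_\mu$, concludes the argument. The principal obstacle is this final bookkeeping step, namely tracking all the $q$-, $c$-, and $d$-exponents so that the $N$-dependent parts cancel. An alternative route, via passing to the $a\to 0$ limit in Theorem \ref{thm:qMpolyslimit}, would require justifying the commutation of the $a \to 0$ and $N \to \infty$ limits, which is not automatic.
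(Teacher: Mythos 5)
Your proposal follows exactly the route of the paper's (sketched) proof: expand the univariate ASC polynomials in the Newton basis $(cx\mid q^{N-1},q^{N-2},\dots)^m$, lift via Cauchy--Binet, verify $\sigma^{\ASC}_N\to\sigma^{\ASC}$ using the exponent identity $\sum_j\binom{\la_j+N-j}{2}=n(\la')-n(\la)+(N-1)|\la|+\binom{N}{3}$ (which is correct and does make the $N$-dependent powers cancel against the prefactor $q^{(N-1)(|\la|-|\mu|)}$), and conclude with Proposition \ref{prop2.C}. This is the paper's argument, with the Lemma \ref{lem:matrices} step made explicit where the paper only refers back to the earlier, more detailed proofs.
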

\begin{proof}
From $(\ref{ASCpolys})$, one can verify that the polynomial $\varphi^{\ASC}_{\ell}(x; q; cq^{1-N}, dq^{1-N})$ admits the expansion
$$
\sum_{m = 0}^{\ell}{ q^{(N-1)(\ell - m)} \frac{(q; q)_{\ell}}{(q; q)_m} c^{-\ell} \left( -\frac{c}{d} \right)^{\ell - m} \frac{q^{{m \choose 2} - {\ell \choose 2}}}{(q; q)_{\ell - m}} (cx \mid q^{N-1}, q^{N-2}, q^{N-3}, \ldots)^m }.
$$
This expansion can be generalized to several variables, by the Cauchy-Binet formula, leading to
$$
\varphi^{\ASC}_{\la\mid N}(x_1, \ldots, x_N; q; cq^{1-N}, dq^{1-N}) = \sum_{\mu\subseteq\la}
{\sigma^{\ASC}_N(\la, \mu; q; c, d) I^A_{\mu\mid N}(cx_1, \ldots, cx_N; q)},
$$
where
\begin{multline*}
\sigma^{\ASC}_N(\la, \mu; q; c, d) = c^{\frac{N(N-1)}{2}} q^{(N-1)(|\la| - |\mu|)} \prod_{i=1}^N{\frac{(q; q)_{\la_i + N - i}}{(q; q)_{\mu_i + N - i}}}\left( -\frac{c}{d} \right)^{|\la| - |\mu|}\\
\times\det\left[ \frac{c^{-(\la_j + N - j)} q^{{\mu_k + N - k \choose 2} - {\la_j + N - j \choose 2}}}{(q; q)_{\la_j - j - \mu_k + k}} \right]_{j, k = 1}^{N}.
\end{multline*}
The result is then proved by checking $\sigma_N^{\ASC}(\la, \mu; q; c, d) \xrightarrow[N \rightarrow \infty]{} \sigma^{\ASC}(\la, \mu; q; c, d)$, and by using Proposition $\ref{prop2.C}$.
\end{proof}

\subsection{Limit transition: $q$-Meixner $\rightarrow$ ASC}
We recall that $(c,d)$ is assumed to be admissible (Definition \ref{def:ASC}). Let $a>0$ be an extra parameter, which will go to $0$. We want the triple $(a,c,d)$ to be admissible in the sense of Definition \ref{meixneradmissible}, so that in the case when $c,d>0$, we impose the additional constraint that $a q^m<c,d< a q^{m-1}$, for some $m\in\Z$.  

\begin{theorem}\label{thm:qMASC}
For any $\lambda\in\Y$,  
\begin{equation*}
\lim_{a\rightarrow 0}{ \Phi_{\lambda}^{qM}\left( ; q; a, c, d \right) }
= \Phi_{\lambda}^{\ASC}(\ccdot; q; c, d),
\end{equation*}
in the finite-dimensional space of symmetric functions of degree\/ $\le |\la|$.
\end{theorem}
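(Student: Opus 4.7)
The strategy is to reduce the theorem to a purely combinatorial limit statement about the expansion coefficients. Since the basis functions $I^A_\mu(Xc;q)$ appearing in both Definitions \ref{df:qMfunction} and \ref{df:ASCfunction} do not depend on $a$, and the summation in both is over the finite set $\{\mu : \mu \subseteq \lambda\}$, it suffices to prove, for every fixed $\mu \subseteq \lambda$, the pointwise limit
\[
\lim_{a \to 0} \sigma^{qM}(\lambda, \mu; q; a, c, d) = \sigma^{\ASC}(\lambda, \mu; q; c, d).
\]
Then the convergence of $\Phi^{qM}_\lambda$ to $\Phi^{\ASC}_\lambda$ in the finite-dimensional space of symmetric functions of degree $\le|\lambda|$ is immediate.

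The core of the argument is the asymptotic analysis of the Pochhammer entries of the matrix defining $\sigma^{qM}$. Setting $n_{jk} := \lambda_j - j - \mu_k + k$, the elementary estimate $(x;q)_n = (-x)^n q^{\binom{n}{2}}(1 + O(x^{-1}))$ as $x \to \infty$ gives
\[
\left(\tfrac{c}{a} q^{\mu_k + 2 - k}; q\right)_{n_{jk}} = \left(-\tfrac{c}{a}\right)^{n_{jk}} q^{n_{jk}(\mu_k+2-k) + \binom{n_{jk}}{2}}(1 + o(1)), \qquad a \to 0.
\]
The critical algebraic observation is that, writing $n_{jk} = a_j + b_k$ with $a_j := \lambda_j - j$ and $b_k := k - \mu_k$, and using $\mu_k + 2 - k = 2 - b_k$, the cross term $a_j b_k$ produced by expanding $\binom{a_j + b_k}{2}$ cancels against the cross term in $(a_j + b_k)(2 - b_k)$. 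Thus the $q$-exponent separates as $\alpha_j + \beta_k$ with $\alpha_j := 2a_j + \binom{a_j}{2}$ and $\beta_k := 2b_k - \binom{b_k+1}{2}$. The $a$-dependence $(-c/a)^{n_{jk}}$ obviously also splits as $(-c/a)^{a_j}(-c/a)^{b_k}$. Extracting these row and column factors from the determinant yields an overall factor $(-c/a)^{|\lambda| - |\mu|}$ (using $\sum_j a_j + \sum_k b_k = |\lambda| - |\mu|$) and $q^{\sum_j \alpha_j + \sum_k \beta_k}$, with the residual determinant being exactly $\det[1/(q;q)_{n_{jk}}]$ appearing in $\sigma^{\ASC}$.

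It remains to consolidate the prefactors. The $a$-dependence collapses through
\[
\left(\tfrac{a}{qd}\right)^{|\lambda|-|\mu|} \cdot \left(-\tfrac{c}{a}\right)^{|\lambda|-|\mu|} = \left(-\tfrac{c}{d}\right)^{|\lambda|-|\mu|} q^{-(|\lambda|-|\mu|)},
\]
which matches the $(-c/d)^{|\lambda|-|\mu|}$ of $\sigma^{\ASC}$ up to a $q$-power. What remains is the $q$-power identity
\[
2\bigl\{n(\mu') - n(\mu) - n(\lambda') + n(\lambda)\bigr\} - (|\lambda|-|\mu|) + \sum_j \alpha_j + \sum_k \beta_k = n(\mu') - n(\mu) - n(\lambda') + n(\lambda),
\]
and I expect this bookkeeping to be the only real obstacle. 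It is handled by expressing $\sum_j (\lambda_j - j)$ and $\sum_j \binom{\lambda_j - j}{2}$ (and the analogous sums over $k$ for $\mu$, with the convention $\mu_k = 0$ for $k > \ell(\mu)$) in terms of $|\lambda|$, $n(\lambda)$, and $n(\lambda')$ via the standard identities $\sum_j j\lambda_j = n(\lambda) + |\lambda|$ and $\sum_j \lambda_j^2 = 2n(\lambda') + |\lambda|$. The index-only contributions (those not involving partition parts) cancel between the $j$- and $k$-sums because both ranges are $1,\dots,\ell(\lambda)$, and what survives matches the right-hand side, as in the corresponding step at the end of the proof of Theorem \ref{thm:bqJqM}.
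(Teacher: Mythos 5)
Your proposal is correct and follows essentially the same route as the paper's proof: reduce to the coefficient limit $\sigma^{qM}\to\sigma^{\ASC}$, apply the asymptotic $(x;q)_n=(-x)^nq^{\binom{n}{2}}(1+O(x^{-1}))$ to each matrix entry, split the resulting exponents into row and column contributions so they factor out of the determinant, and close the bookkeeping with the identities $\sum_j j\la_j=n(\la)+|\la|$ and $\sum_j\la_j^2=2n(\la')+|\la|$. The only cosmetic difference is that the paper packages the entry exponent as $\tfrac12\bigl[(\la_j-j+\tfrac12)^2-(\mu_k-k+\tfrac12)^2\bigr]$ after first peeling off one power of $q^{n_{jk}}$, whereas you separate it directly into $\alpha_j+\beta_k$; the exponent identity you state is correct as written.
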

\begin{proof}
From Definitions $\ref{df:qMfunction}$ and $\ref{df:ASCfunction}$, it suffices to show
\begin{equation}\label{eqn:qMtoASC}
\lim_{a \rightarrow 0}{\sigma^{qM}(\lambda, \mu; q; a, c, d)} = \sigma^{\ASC}(\lambda, \mu; q; c, d).
\end{equation}
We have the limit
$$\lim_{a \rightarrow 0}{a^{\la_j - j - \mu_k + k} \left( \frac{c}{a}q^{\mu_k + 2 - k}; q \right)_{\la_j - j - \mu_k + k}}
= (-cq^{\mu_k + 2 - k})^{\la_j - j - \mu_k + k}q^{{\la_j - j - \mu_k + k \choose 2}},$$
so the limit in the left-hand size of $(\ref{eqn:qMtoASC})$ exists and equals
\begin{multline*}
c^{-|\lambda|} \left( \frac{1}{qd} \right)^{|\lambda| - |\mu|}
q^{2\{n(\mu') - n(\mu) - n(\lambda') + n(\lambda)\}}\\
\times\det \left[ \frac{(-cq^{\mu_k + 2 - k})^{\la_j - j - \mu_k + k}q^{{\la_j - j - \mu_k + k \choose 2}}}
{\left( q ; q \right)_{\lambda_j - j - \mu_k + k}} \right]_{j, k = 1}^{\ell(\lambda)}
\end{multline*}
\begin{multline}\label{eqn:limittoASC}
=c^{-|\lambda|} \left( - \frac{c}{d} \right)^{|\lambda| - |\mu|}
q^{2\{n(\mu') - n(\mu) - n(\lambda') + n(\lambda)\}}\\
\times\det \left[ \frac{(q^{\mu_k + 1 - k})^{\la_j - j - \mu_k + k}q^{{\la_j - j - \mu_k + k \choose 2}}}
{\left( q ; q \right)_{\lambda_j - j - \mu_k + k}} \right]_{j, k = 1}^{\ell(\lambda)}.
\end{multline}
The $(j, k)$ entry of the matrix above is a fraction where the numerator is $q$ raised to the power of
\begin{gather*}
(\mu_k+1-k)(\la_j - j - \mu_k + k) + {\la_j - j - \mu_k + k \choose 2} =\\
\frac{(\la_j - j - \mu_k + k)(\la_j - j + \mu_k - k + 1)}{2} = \frac{(\la_j - j  + \frac{1}{2})^2 - (\mu_k - k + \frac{1}{2})^2}{2}.
\end{gather*}
We can calculate (similar calculations were done many times by now)
\begin{gather*}
\sum_{j = 1}^{\ell(\la)}{\frac{(\la_j - j + 1/2)^2}{2}} - \sum_{k = 1}^{\ell(\la)}{\frac{(\mu_k - k + 1/2)^2}{2}}
= n(\la') - n(\la) - n(\mu') + n(\mu).
\end{gather*}
It follows that $(\ref{eqn:limittoASC})$ equals $\sigma^{\ASC}(\la, \mu; q; c, d)$, as desired.
\end{proof}

\subsection{Formal orthogonality}

As usual, because $\{ \Phi^{\ASC}_{\la} := \Phi^{\ASC}_{\la}(\cdot; q; c, d)\}_{\la\in\Y}$ is a basis of $\Sym$, we can introduce the moment functional $\E$ by setting 
$$
\E(\Phi^{\ASC}_{\la}) := \delta_{\la, \emptyset}, \quad \la\in\Y,
$$
and define the corresponding inner product  by $(F, G):= \E(FG)$.

\begin{theorem} Let $(c,d)$ be admissible {\rm(}Definition \ref{def:ASC}{\rm)}. We have
$$
\left(\Phi^{\ASC}_{\la}, \Phi^{\ASC}_{\mu} \right)= \delta_{\la, \mu}h_{\la}^{\ASC}(q; c, d), \quad \la,\mu\in\Y,
$$
where
\begin{equation}\label{eqn:hASC}
h_{\la}^{\ASC}(q; c, d) := \frac{q^{2\{ n(\la) - n(\la') \}}}{(cdq)^{|\la|}}.
\end{equation}
\end{theorem}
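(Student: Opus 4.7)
The plan is to mirror the proof of Theorem \ref{thm:formalqM}: deduce the formal orthogonality of the ASC symmetric functions from the corresponding statement for the $q$-Meixner symmetric functions by taking the limit $a\to 0$, using Theorem \ref{thm:qMASC}.

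First, by Theorem \ref{thm:qMASC}, for each fixed $\la\in\Y$ we have $\Phi^{qM}_{\la}(\ccdot;q;a,c,d) \to \Phi^{\ASC}_{\la}(\ccdot;q;c,d)$ as $a \to 0$ in the finite-dimensional subspace $\Sym_{\le|\la|}$. Invoking the same general principle used in Step 2 of the proof of Theorem \ref{thm:formalorth} (going back to \cite[proof of Theorem 4.1(c)]{Ols-2017})---namely, that termwise convergence of a basis of $\Sym$ on finite-dimensional subspaces forces convergence of the structure constants of multiplication and hence of the associated moment functionals---one obtains, for each pair $\la,\mu\in\Y$,
\begin{equation*}
(\Phi^{\ASC}_{\la}, \Phi^{\ASC}_{\mu}) \;=\; \lim_{a\to 0}(\Phi^{qM}_{\la}, \Phi^{qM}_{\mu}) \;=\; \delta_{\la, \mu} \lim_{a\to 0} h^{qM}_{\la}(q; a, c, d),
\end{equation*}
the second equality being Theorem \ref{thm:formalqM}. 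In particular, this already delivers the orthogonality relation $(\Phi^{\ASC}_\la, \Phi^{\ASC}_\mu) = 0$ for $\la\ne\mu$.

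It remains to compute $\lim_{a\to 0} h^{qM}_{\la}(q; a, c, d)$ explicitly and verify that it equals $h^{\ASC}_{\la}(q; c, d) = q^{2(n(\la) - n(\la'))}/(cdq)^{|\la|}$. The key asymptotic is
\begin{equation*}
(cq/a; q)_{\la} \;=\; \prod_{i \ge 1}(cq^{2-i}/a; q)_{\la_i} \;\sim\; (-c/a)^{|\la|}\, q^{|\la| - n(\la) + n(\la')} \quad (a\to 0),
\end{equation*}
which uses the identities $\sum_i i\la_i = n(\la) + |\la|$ and $\sum_i \binom{\la_i}{2} = n(\la')$, and an identical formula for $(dq/a; q)_\la$ with $c$ replaced by $d$. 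Substituting these into the formula for $h^{qM}_\la$ from Theorem \ref{thm:formalqM}, all powers of $a$ cancel, and a routine tallying of the resulting exponents of $c$, $d$, and $q$ recovers the claimed formula for $h^{\ASC}_\la$.

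The only non-trivial obstacle is this final bookkeeping of exponents of $q$; it is entirely analogous to the $b\to 0$ computation at the end of the proof of Theorem \ref{thm:formalqM} and poses no conceptual difficulty.
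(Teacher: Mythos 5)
Your proposal follows the paper's own proof essentially verbatim: the paper likewise reduces the statement to Theorem \ref{thm:qMASC} plus the formal orthogonality of the $q$-Meixner functions, and then computes $\lim_{a\to0}a^{2|\la|}\bigl(\tfrac{cq}{a},\tfrac{dq}{a};q\bigr)_{\la}=(cdq^2)^{|\la|}q^{2\{n(\la')-n(\la)\}}$, which is exactly your asymptotic for $(cq/a;q)_\la$. One caveat about the final ``routine tallying'': substituting into \eqref{eqn:hqM} exactly as printed yields $q^{6\{n(\la')-n(\la)\}}/(cdq)^{|\la|}$ rather than \eqref{eqn:hASC}, because the prefactor in \eqref{eqn:hqM} should read $q^{4\{n(\la)-n(\la')\}}$ (as the $b\to0$ computation at the end of the proof of Theorem \ref{thm:formalqM} in fact shows); with that sign corrected, your bookkeeping does close and recovers \eqref{eqn:hASC}.
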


Note that $h_{\la}^{\ASC}(q; c, d)>0$, for all $\la\in\Y$.

\begin{proof}
The proof is similar to that of Theorem $\ref{thm:formalqM}$.
The only task is to calculate the limit of $(\ref{eqn:hqM})$ when $a \rightarrow 0$.
This is a result of the following calculation:
\begin{gather*}
\lim_{a \rightarrow 0}{a^{2|\la|}\left( \frac{cq}{a}, \frac{dq}{a}; q \right)_{\la}}
= \lim_{a \rightarrow 0}{\prod_{i=1}^{\ell(\la)}{ a^{2\la_i} \left( \frac{cq^{2-i}}{a}, \frac{dq^{2-i}}{a}; q \right)_{\la_i} }}
= \prod_{i=1}^{\ell(\la)}{(cdq^{4-2i})^{\la_i}q^{2{\la_i \choose 2}}}\\
= (cdq^2)^{|\la|} q^{2\sum_{i=1}^{\ell(\la)}{-(i - 1)\la_i + {\la_i \choose 2}}}
= (cdq^2)^{|\la|} q^{2\{ -n(\la)+n(\la') \}}.
\end{gather*}
\end{proof}

\subsection{Orthogonality measures}

The theorem below is similar to Theorem \ref{thm5.A}; its proof will be given in a separate paper. 

Fix an admissible pair $(c,d)$ {\rm(}Definition \ref{def:ASC}{\rm)}, take one of the grids   $\De_{\al,\be}$ defined by \eqref{eq6.A}, and denote by $\Omega^{\ASC}$ the set of all point configurations on $\De_{\al,\be}$ that are bounded away from $\pm\infty$.  Note that  symmetric functions may be evaluated at arbitrary point configurations $X\in\Omega^{qM}$.

\begin{theorem}\label{thm6.A}

{\rm(i)} Let $a>0$  be an extra parameter of the form $a=\al q^M$, with $M\in\Z$.   
In the limit regime as $M \rightarrow+\infty$, the probability measures $M^{qM}_{q, a, \be, c, d}$ defined in Theorem $\ref{thm5.A}$ converge weakly to a probability measure $M^{ASC}_{q, \al, \be,  c, d}$ on $\Omega^{\ASC}$. 

{\rm(ii)} The ASC symmetric functions $\Phi^{\ASC}_{\lambda} = \Phi_{\lambda}^{\ASC}(\ccdot; q;c, d)$ are square integrable with respect to $M^{ASC}_{q, \al, \be, c, d}$, and satisfy the orthogonality relations 
$$
\left( \Phi_{\lambda}^{\ASC}, \Phi_{\mu}^{\ASC} \right)_{L^2 \left(\Omega^{\ASC}, M^{\ASC}_{q, \al, \be, c, d} \right)}
= \delta_{\lambda, \mu}h_{\la}^{\ASC}(q; a, c, d),
$$
where $\la,\mu\in\Y$, and $h_{\la}^{\ASC}(q; c, d)$ was defined in $(\ref{eqn:hASC})$.
\end{theorem}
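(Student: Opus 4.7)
The plan is to mimic, at a more technical level, the argument used to pass from the $q$-Racah to big $q$-Jacobi orthogonality measures in Section \ref{sect4}. The two key inputs are Theorem \ref{thm:qMASC} (the limit $\Phi^{qM}_\la\to\Phi^{\ASC}_\la$ in the finite-dimensional subspace of symmetric functions of degree $\le|\la|$) and Theorem \ref{thm5.A} (orthogonality of the $\Phi^{qM}_\la$'s with respect to $M^{qM}_{q,a,\be,c,d}$, together with the explicit squared norms).

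First I would observe that when $a=\al q^M$ with $M\in\Z_{\ge 1}$, the positive half $a^{-1}q^{\Z_{\ge 1}}$ of the $q$-Meixner grid $\De_{a,\be}$ coincides with the subset $\al^{-1}q^{\Z_{\ge 1-M}}$ of the ASC grid $\De_{\al,\be}$, while the negative halves $\be^{-1}q^{\Z}$ are identical. This yields a canonical inclusion $\Omega^{qM}\hookrightarrow\Omega^{\ASC}$ through which each $M^{qM}_{q,a,\be,c,d}$ may be regarded as a probability measure on $\Omega^{\ASC}$. As $M\to+\infty$, the embedded $q$-Meixner grids exhaust $\De_{\al,\be}$.

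Next I would establish weak convergence along the sequence indexed by $M$. A symmetric function evaluated on a configuration lying in a fixed bounded region of $\R$ is a polynomial in finitely many power sums, hence a continuous cylinder-type test function on $\Omega^{\ASC}$; the collection $\{\Phi^{\ASC}_\la\}_{\la\in\Y}$ is moreover point-separating. The limit transition of Theorem \ref{thm:qMASC} is uniform on configurations confined to any such bounded region, so combined with $\langle M^{qM}_{q,a,\be,c,d},\Phi^{qM}_\la\rangle=\delta_{\la,\emptyset}$ it forces every weak subsequential limit $M^{\ASC}$ to satisfy $\langle M^{\ASC},\Phi^{\ASC}_\la\rangle=\delta_{\la,\emptyset}$ for all $\la\in\Y$, which determines $M^{\ASC}$ uniquely as a probability measure on $\Omega^{\ASC}$. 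Existence of a subsequential limit, i.e.\ tightness of $\{M^{qM}_{q,a,\be,c,d}\}_M$, is the chief technical step: one must control, uniformly in $M$, the probability that the configuration contains a point far from the origin, using the explicit product structure of the univariate $q$-Meixner weights $w^{qM}$ and (very plausibly) the fact that these measures are determinantal with a tractable correlation kernel. For (ii), the squared-norm formula then follows by passing to the $a\to 0$ limit in
\[
\langle M^{qM}_{q,a,\be,c,d},\,\Phi^{qM}_\la\Phi^{qM}_\mu\rangle=\delta_{\la,\mu}\,h^{qM}_\la(q;a,c,d),
\]
combining pointwise convergence $\Phi^{qM}_\la\Phi^{qM}_\mu\to\Phi^{\ASC}_\la\Phi^{\ASC}_\mu$ with the tightness from the previous step (truncation plus dominated convergence upgrades pointwise to integral convergence), and with the explicit asymptotics $h^{qM}_\la(q;a,c,d)\to h^{\ASC}_\la(q;c,d)$, a computation essentially already performed inside the proof of Theorem \ref{thm:formalqM}. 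Square integrability of $\Phi^{\ASC}_\la$ is automatic once the limiting norm is known to be a finite positive number.

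The main obstacle is the non-compactness of $\Omega^{\ASC}$. In the $q$-Racah to big $q$-Jacobi transition of Section \ref{sect4}, the analogous statement rested on the compactness of $\Omega^{bqJ}$, which made weak convergence follow immediately via Stone--Weierstrass. Here, under the indeterminate moment regime of the $q$-Meixner and ASC polynomials, one needs a genuine tail estimate, and it is precisely at this point that the grid-defining parameters $\al,\be$ enter in an essential way, since different choices of grid produce distinct limit measures --- this is the combinatorial shadow of the non-uniqueness of the ASC moment problem.
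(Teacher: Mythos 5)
There is nothing in the paper to compare your proposal against: Theorem \ref{thm6.A} is one of the results that the authors explicitly state \emph{without proof} (``The theorem below is similar to Theorem \ref{thm5.A}; its proof will be given in a separate paper''), and the same is true of Theorem \ref{thm5.A}, which your argument takes as input. So at best you are outlining a proof of an announced result conditional on another announced result.

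Taken on its own terms, your outline identifies the right ingredients and, importantly, the right obstacle. The grid identification $a^{-1}q^{\Z_{\ge 1}}=\al^{-1}q^{\Z_{\ge 1-M}}\subset\al^{-1}q^{\Z}$ for $a=\al q^{M}$ is correct, the characterization of any subsequential limit by $\langle M^{\ASC},\Phi^{\ASC}_\la\rangle=\de_{\la,\emptyset}$ via Theorem \ref{thm:qMASC} is the natural adaptation of the compact-case argument from Section \ref{sect4}, and the $a\to0$ asymptotics of $h^{qM}_\la$ is indeed the computation already done in the proof of the formal-orthogonality theorem for ASC. But the entire analytic content of the theorem sits in the two steps you leave as assertions: (a) tightness of the family $\{M^{qM}_{q,a,\be,c,d}\}_{M}$ on the non-compact space $\Om^{\ASC}$, and (b) uniform integrability of $\Phi^{qM}_\la\Phi^{qM}_\mu$ against these measures, which is what your ``truncation plus dominated convergence'' step silently requires (weak convergence alone only gives $\int(\Phi^{\ASC}_\la)^2\,dM^{\ASC}\le\liminf$, not the claimed equality). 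Neither follows from anything proved in the paper --- the explicit form of $M^{qM}_{q,a,\be,c,d}$ is itself not established here, so there is no product formula or correlation kernel available to you to run a tail estimate. Moreover, you must also verify that the limit measure is actually supported on configurations bounded away from $\pm\infty$ (the definition of $\Om^{\ASC}$ in Theorem \ref{thm6.A}), which is a further consequence of the tail bounds, not of the moment relations. As written, the proposal is a plausible roadmap rather than a proof, and the gap it leaves open is exactly the part the authors defer to a separate publication.
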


Thus, given an admissible pair $(c,d)$, we may exhibit a family of orthogonal measures for $\{\Phi_{\lambda}^{\ASC}(\ccdot; q; c, d):\la\in\Y\}$ depending on two continuous parameters $\al$ and $\be$, just as in the case of univariate ASC polynomials.

\begin{theorem}\label{thm6.B}
Fix an admissible pair $(c,d)$ {\rm(}Definition \ref{def:ASC}\,{\rm)}, and let $\al>0$ and $\be<0$  be arbitrary.  Consider the the grid $\De_{\al, \be}$ defined in \eqref{eq6.A} and denote by $\Omega^{\ASC}$ the set of all point configurations on $\De_{\al, \be}$ that are bounded away from $-\infty$ and $+\infty$.

{\rm(i)} Let $a$ range over the grid $\{\al^{-1} q^M: M\in\Z\}$.  
In the limit regime as $M \rightarrow+\infty$, the probability measures $M^{qM}_{q, a, \be, c, d}$ defined in Theorem \ref{thm5.A} converge weakly to a probability measure $M^{\ASC}_{q, \al, \be, c, d}$ on $\Omega^{\ASC}$.

{\rm(ii)} The $ASC$ symmetric functions $\Phi^{\ASC}_{\lambda} = \Phi_{\lambda}^{\ASC}(\ccdot; q; c, d)$ are square integrable with respect to $M^{\ASC}_{q, \al, \be, c, d}$ and satisfy the orthogonality relations 
$$
\left( \Phi_{\lambda}^{\ASC}, \Phi_{\mu}^{\ASC} \right)_{L^2 \left(\Omega^{\ASC}, M^{\ASC}_{q, \al, \be, c, d} \right)}
= \delta_{\lambda, \mu}h_{\la}^{\ASC}(q; c, d),
$$
where $\la,\mu\in\Y$, and $h_{\la}^{\ASC}(q; c, d)$ was defined in \eqref{eqn:hASC}.
\end{theorem}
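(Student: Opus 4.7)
My plan is to adapt the argument from the measure-level limit theorem at the end of Section \ref{sect4} (the ``$q$-Racah $\to$ big $q$-Jacobi'' convergence of orthogonality measures), in which combining the symmetric-function limit with the uniqueness characterization $\langle M,\Phi_{\la}\rangle=\de_{\la,\emptyset}$ yields weak convergence of the measures. The crucial new ingredient will be tightness, because the configuration space here is non-compact. First, I would place every $M^{qM}_{q,a,\be,c,d}$ with $a=\al^{-1}q^M$ into a common ambient space: since the positive part of the grid $\De_{a,\be}$ is $a^{-1}q^{\Z_{\ge 1}}=\al q^{\Z_{\ge 1-M}}$, each $q$-Meixner configuration space embeds canonically into the space of configurations on the doubly infinite grid whose positive part is $\al q^{\Z}$ and whose negative part is the unchanging $\be^{-1}q^{\Z}$ (a mild relabeling aligns this with $\De_{\al,\be}$ of Section \ref{sect6}). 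In this common ambient space, the defining relation $\langle M^{qM}_{q,a,\be,c,d},\Phi^{qM}_{\la}(\ccdot;q;a,c,d)\rangle=\de_{\la,\emptyset}$ from Theorem \ref{thm5.A}(ii), combined with the symmetric-function limit $\Phi^{qM}_{\la}(\ccdot;q;a,c,d)\to\Phi^{\ASC}_{\la}(\ccdot;q;c,d)$ of Theorem \ref{thm:qMASC}, shows that any weak accumulation point $\wt M$ of the sequence satisfies $\langle\wt M,\Phi^{\ASC}_{\la}(\ccdot;q;c,d)\rangle=\de_{\la,\emptyset}$ for all $\la\in\Y$.

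The hard part will be tightness. Unlike Theorem \ref{thm:qRbqJ}, whose ambient configuration space is compact, our positive grid extends indefinitely as $a\to 0$ along $\al^{-1}q^{\Z}$, and the univariate ASC moment problem is indeterminate; different grids on which $a$ takes values produce genuinely different ASC orthogonality measures (this is precisely the phenomenon behind the coexistence of Theorems \ref{thm6.A} and \ref{thm6.B}). To control the mass escape near $+\infty$, I would analyze the univariate $q$-Meixner weight $w^{qM}(x)\propto|x|(ax;q)_{\infty}/(cx,dx;q)_{\infty}$: for $x$ of order $a^{-1}$ or larger, the Pochhammer factor $(ax;q)_{\infty}$ suppresses the weight sufficiently fast along the specific grid $a=\al^{-1}q^M$ to give a uniform tail bound. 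This estimate must then be promoted to the $N$-variate determinantal weight underlying the construction of $M^{qM}_{q,a,\be,c,d}$ and must survive the $N\to\infty$ limit used to build the measures in Theorem \ref{thm5.A}. Selection of the specific grid $\al^{-1}q^{\Z}$ is exactly what singles out $M^{\ASC}_{q,\al,\be,c,d}$ from the indeterminate family.

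Once tightness is in hand, both conclusions follow cleanly. By Stone--Weierstrass, symmetric functions restrict to a dense subalgebra of $C(\Omega^{\ASC})$, so the relations $\langle\wt M,\Phi^{\ASC}_{\la}\rangle=\de_{\la,\emptyset}$ determine $\wt M$ uniquely, whence the whole sequence converges weakly to a well-defined limit $M^{\ASC}_{q,\al,\be,c,d}$; this is (i). For (ii) I would pass to the limit in the $q$-Meixner orthogonality identity
\[
\langle M^{qM}_{q,a,\be,c,d},\,\Phi^{qM}_{\la}\Phi^{qM}_{\mu}\rangle=\de_{\la,\mu}\,h^{qM}_{\la}(q;a,c,d),
\]
using the generic argument (cf.\ Step~2 of the proof of Theorem \ref{thm:formalorth}) that Theorem \ref{thm:qMASC} extends to products $\Phi^{qM}_{\la}\Phi^{qM}_{\mu}\to\Phi^{\ASC}_{\la}\Phi^{\ASC}_{\mu}$, together with the evaluation $\lim_{a\to 0}h^{qM}_{\la}(q;a,c,d)=h^{\ASC}_{\la}(q;c,d)$ already carried out in the proof of the formal orthogonality of the ASC functions earlier in Section \ref{sect6}. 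This gives exactly the asserted orthogonality with squared norms $h^{\ASC}_{\la}(q;c,d)$.
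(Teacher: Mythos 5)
The paper does not actually prove Theorem \ref{thm6.B}: it is announced together with Theorems \ref{thm5.A} and \ref{thm6.A}, and the text states explicitly that the proofs will appear in a separate publication (the intended route, hinted at in the introduction, goes through an explicit determinantal/correlation-kernel description of the measures). So there is no in-paper argument to compare yours against; what can be assessed is whether your sketch would close the statement on its own, and it would not.

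Two steps are genuinely problematic. First, your uniqueness/convergence argument for part (i) invokes Stone--Weierstrass to claim that the relations $\langle \wt M,\Phi^{\ASC}_\la\rangle=\de_{\la,\emptyset}$ pin down any accumulation point. That works in the $q$-Racah setting precisely because $\Om^{qR}$ is compact and the symmetric functions are bounded there; here $\Om^{\ASC}$ is non-compact, the $\Phi^{\ASC}_\la$ are unbounded on it, and --- as you yourself note --- the underlying moment problem is indeterminate. Indeed, the very coexistence of Theorems \ref{thm6.A} and \ref{thm6.B} shows that distinct probability measures satisfy the same relations $\langle M,\Phi^{\ASC}_\la\rangle=\de_{\la,\emptyset}$, so these relations cannot single out the limit; convergence of the full sequence must be obtained by other means (e.g.\ convergence of correlation functions). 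Second, both the tightness estimate and the passage to the limit in part (ii) are left as intentions rather than arguments: weak convergence only controls integrals of bounded continuous functions, so the identity $\langle M^{qM}_{q,a,\be,c,d},\Phi^{qM}_\la\Phi^{qM}_\mu\rangle\to\langle M^{\ASC},\Phi^{\ASC}_\la\Phi^{\ASC}_\mu\rangle$ requires a uniform integrability (moment-tail) bound for the unbounded observables $\Phi^{qM}_\la\Phi^{qM}_\mu$, uniformly in $a$ along the grid; you flag the univariate weight decay coming from $(ax;q)_\infty$ but do not carry out the estimate, nor its promotion through the $N$-variate determinantal weights and the $N\to\infty$ limit of Theorem \ref{thm5.A} (whose construction is itself not available in this paper). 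Until those estimates and a correct identification of the limit are supplied, the proposal remains a plausible outline rather than a proof.
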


\bigskip

Cesar Cuenca: Department of Mathematics, MIT, Cambridge, MA, USA.

Email address: cuenca@mit.edu

\bigskip

Grigori Olshanski: Institute for Information Transmission Problems, Moscow, Russia; Skolkovo Institute of Science and Technology, Moscow, Russia; National Research University Higher School of Economics, Moscow, Russia.

Email address: olsh2007@gmail.com

\end{document}